\numberwithin{equation}{section}
\newcommand{\R}{{\mathbb R}} 
\newcommand{\C}{{\mathbb C}} 
\newcommand{\N}{{\mathbb N}} 
\newcommand{\Z}{{\mathbb Z}}
\newcommand{\D}{{\mathbb D}}
\renewcommand{\Re}{\mathrm{Re}}
\newtheorem{theorem}{Theorem}[section]
\newtheorem{lemma}[theorem]{Lemma}
\newtheorem{proposition}[theorem]{Proposition}
\newtheorem{corollary}[theorem]{Corollary}
\theoremstyle{definition}
\newtheorem{definition}[theorem]{Definition}
\newtheorem{example}[theorem]{Example}
\theoremstyle{remark}
\newtheorem{remark}[theorem]{Remark}
\numberwithin{equation}{section}
\begin{document}
\title[Semigroups of composition operators]{Composition operators on the algebra of Dirichlet series}
%%% Information for the first author

\author[M.D. Contreras]{Manuel D. Contreras}
\address{Departamento de Matem\'atica Aplicada II and IMUS, Escuela T\'ecnica Superior de Ingenier\'ia, Universidad de Sevilla,
	Camino de los Descubrimientos, s/n 41092, Sevilla, Spain}
\email{contreras@us.es}

\author[C. G\'omez-Cabello ]{Carlos G\'omez-Cabello}
\address{Departamento de Matem\'atica Aplicada II and IMUS, Edificio Celestino Mutis, Avda. Reina Mercedes, s/n. 41012 - Sevilla, Universidad de Sevilla,
Sevilla, Spain}
\email{cgcabello@us.es}

\author[L. Rodr\'iguez-Piazza]{Luis Rodr\'iguez-Piazza}
\address{Departmento de An\'alisis Matem\'atico and IMUS, Facultad de Matem\'aticas, Universidad
	de Sevilla, Calle Tarfia, s/n 41012 Sevilla, Spain}
\email{piazza@us.es}

\subjclass[2020]{Primary  30B50, 30H50, 37F44, 47B33, 47D03}% ; Secondary }

\date{\today}

\keywords{Composition operators, algebra of Dirichlet series, semigroups of composition operators, Banach spaces of Dirichlet series}

\thanks{This research was supported in part by Ministerio de Ciencia e Innovación, Spain,  and the European Union (FEDER), project PID2022-136320NB-I00, and Junta de Andaluc{\'i}a, FQM133 and FQM104.}

\maketitle 
\begin{abstract}
    The algebra of Dirichlet series $\mathcal{A}(\C_+)$ consists on those Dirichlet series convergent in the right half-plane $\C_+$ and which are also uniformly continuous there. This algebra was recently introduced by Aron, Bayart, Gauthier, Maestre, and Nestoridis. We describe the symbols $\Phi:\C_+\to\C_+$ giving rise to bounded composition operators $C_{\Phi}$ in $\mathcal{A}(\C_+)$ and denote this class by $\mathcal{G}_{\mathcal{A}}$. We also characterise when the operator $C_{\Phi}$ is compact in $\mathcal{A}(\C_+)$. As a byproduct, we show that the weak compactness is equivalent to the compactness for $C_{\Phi}$. Next, the closure under the local uniform convergence of several classes of symbols of composition operators in Banach spaces of Dirichlet series is discussed. We also establish a one-to-one correspondence between continuous semigroups of analytic functions $\{\Phi_t\}$ in the class $\mathcal{G}_{\mathcal{A}}$ and strongly continuous semigroups of composition operators $\{T_t\}$, $T_tf=f\circ\Phi_t$, $f\in\mathcal{A}(\C_+)$. We conclude providing examples showing the differences between the symbols of bounded composition operators in $\mathcal{A}(\C_+)$ and the Hardy spaces of Dirichlet series $\mathcal{H}^p$ and $\mathcal{H}^{\infty}$.
\end{abstract}
\tableofcontents

\section{Introduction}
Since the introduction of the first Hardy spaces of Dirichlet series by Hendemalm, Linqvist and Seip in \cite{HedLinSeip}, these spaces have attired an increasing attention. Nonetheless, when it comes to the algebra of Dirichlet series $\mathcal{A}(\C_+)$, this is, the closure of Dirichlet polynomials in the $\mathcal{H}^{\infty}$-norm, the literature is significantly lesser. To the best of the authors' knowledge, this new space of Dirichlet series was first presented in \cite{Aron-Bayart-cia}, where some of its most essential properties were proven as well as some non-trivial related results. To mention some others works concerning $\mathcal{A}(\C_+)$, see  \cite{sargenta}, where the validity of Carlson's identity in $\mathcal{A}(\C_+)$ is studied, and \cite{carandoetal}, where the authors consider the maximal Bohr's strip.

\ 
One of the most recurrent questions that arises when studying Banach spaces of analytic functions is that of the boundedness of composition operators. Namely, given a Banach space $X$ of analytic functions in a domain $\Omega$ and a function $\Phi:\Omega\to\Omega$, we consider the operator $C_{\Phi}$ given by $C_{\Phi}f=f\circ\Phi$, for $f\in X$. One of the goals is to give necessary and sufficient conditions on the \emph{symbol} $\Phi$ so that $f\circ\Phi$ belongs to $X$, for all $f\in X$, and that $C_{\Phi}$ defines a bounded operator from $X$ into itself.

\ 
In the context of Banach spaces of Dirichlet series, the first characterisation of bounded composition operators was given for the Hilbert space $\mathcal{H}^2$ by
Gordon and Hedenmalm in \cite{gorheda}. To do so, they introduced the now known-as \emph{Gordon-Hedenmalm class} $\mathcal{G}$. Little after, Bayart  obtained in \cite{bayarto} results regarding the boundedness of composition operators on the spaces $\mathcal H^{p}$ for $1\leq p\leq +\infty$. The complete characterisation of the boundedness for $p\notin 2\N \cup\{ +\infty\}$ remains a challenging open problem.  In the same work, Bayart also characterised the boundedness of composition operators in $\mathcal{H}^{\infty}$ giving rise to a class of symbols that we denote by $\mathcal{G}_{\infty}$. We recall both the definitions of $\mathcal{G}$ and $\mathcal{G}_{\infty}$ right straightaway. As customary, for $\theta\in\R$, we denote
$
\C_{\theta}:=\{s\in\C:\text{Re}(s)>\theta\}
$
and  $\C_{+}=\C_{0}$. 
\begin{definition}\label{def: gorhed} Let $\Phi:\C_{+}\to\C_{+}$ be an analytic function.
\begin{enumerate}
\item We say that $\Phi$ belongs to the class $\mathcal{G}_{\infty}$ if there exist $c_\Phi\in\N\cup\{0\}$ and a convergent Dirichlet series $\varphi$ such that
    \begin{equation*}%\label{chosenone}
        \Phi(s)=c_{\Phi}s+\varphi(s).
    \end{equation*}
The value $c_{\Phi}$ is known as the {\sl characteristic} of the function $\Phi$. 
\item We say that $\Phi$ belongs to the {\sl Gordon-Hedenmalm class} $\mathcal{G}$ if $\Phi\in  \mathcal{G}_{\infty}$ and  $\Phi(\C_+)\subset \C_{1/2}$ in case $c_{\Phi}=0$.
\end{enumerate}
\end{definition}
In Section \ref{sec:compopalgb}, after recalling some basic notions related to Dirichlet series, we describe the symbols giving rise to bounded composition operators in the context of the algebra of Dirichlet series $\mathcal{A}(\C_+)$. This leads us to the introduction of the family of symbols $\mathcal{G}_{\mathcal{A}}$:
\begin{definition}\label{def: ga}
Let $\Phi:\C_+\to\C_+$. We say that $\Phi$ belongs to the  class $\mathcal{G}_{\mathcal{A}}$ if $\Phi\in\mathcal{G}_{\infty}$ and, for every $M>0$, $\Phi$ is uniformly continuous in 
\[
A_M:=\{s\in\C_+:0<\text{Re}(\Phi(s))<M\}.
\]
\end{definition}
Then, the following result is established:
\begin{theorem}[See Theorem \ref{thsimal}]
Let $\Phi:\C_+\to\C_+$ be analytic. Then, $C_{\Phi}$ defines a bounded composition operator in $\mathcal{A}(\C_+)$ if and only if $\Phi\in\mathcal{G}_{\mathcal{A}}$.
\end{theorem}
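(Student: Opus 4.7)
The proof naturally splits into the two implications.

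\smallskip

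\textbf{Sufficiency.} Assume $\Phi\in\mathcal{G}_{\mathcal{A}}$. Since $\Phi\in\mathcal{G}_{\infty}$, Bayart's result yields $\|f\circ\Phi\|_\infty\leq\|f\|_\infty$ for every $f\in\mathcal{H}^{\infty}$, and because $\mathcal{A}(\C_+)$ is isometrically embedded in $\mathcal{H}^{\infty}$ the only task is to show that $f\circ\Phi$ remains uniformly continuous on $\C_+$ for every $f\in\mathcal{A}(\C_+)$. The decisive extra ingredient is that every $f\in\mathcal{A}(\C_+)$ admits the uniform limit $\lim_{\sigma\to+\infty}f(\sigma+it)=a_1(f)$ in $t$; this is trivial for Dirichlet polynomials and passes to uniform limits. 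Given $\e>0$, pick $M>0$ with $|f(z)-a_1(f)|<\e/3$ whenever $\mathrm{Re}(z)>M$, and use the uniform continuity of $f$ on $\C_+$ together with that of $\Phi$ on $A_{M+1}$ to produce $\delta>0$. For $|s-s'|<\delta$ the argument splits according to whether both $\mathrm{Re}(\Phi(s)),\mathrm{Re}(\Phi(s'))\leq M+1$, both $\geq M$, or a mixed configuration; in the last case an intermediate-value argument applied to $\mathrm{Re}(\Phi)$ along the segment $[s,s']\subset\C_+$ produces a point $s_0$ with $\mathrm{Re}(\Phi(s_0))=M+1/2$ through which the triangle inequality closes the bound.

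\smallskip

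\textbf{Necessity.} Assume $C_{\Phi}$ is bounded on $\mathcal{A}(\C_+)$. The inclusion $\Phi\in\mathcal{G}_{\infty}$ is immediate: every monomial $n^{-s}$ lies in $\mathcal{A}(\C_+)$, hence $n^{-\Phi}=C_{\Phi}(n^{-s})\in\mathcal{A}(\C_+)\subset\mathcal{H}^{\infty}$, and Bayart's characterisation yields the structure $\Phi(s)=c_{\Phi}s+\varphi(s)$ of Definition~\ref{def: gorhed}. For the uniform continuity of $\Phi$ on every $A_M$ I would argue by contradiction: suppose there are $s_n,s_n'\in A_M$ with $|s_n-s_n'|\to 0$ but $|\Phi(s_n)-\Phi(s_n')|\geq\e_0>0$, and put $u_n:=\Phi(s_n)-\Phi(s_n')$. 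For each prime $p$ the function $p^{-\Phi}=C_{\Phi}(p^{-s})\in\mathcal{A}(\C_+)$ is uniformly continuous on $\C_+$, and the lower bound $|p^{-\Phi(s_n')}|\geq p^{-M}$ valid on $A_M$ then forces $p^{-u_n}\to 1$ for every prime $p$. Taking moduli gives $\mathrm{Re}(u_n)\to 0$, while $\tau_n:=\mathrm{Im}(u_n)$ must satisfy $|\tau_n|\geq\e_0/2$ eventually and $e^{-i\tau_n\log p}\to 1$ for all primes.

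\smallskip

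\textbf{Main obstacle.} The bounded subsequential case $\tau_n\to\tau$ is handled easily: $\tau\log p\in 2\pi\Z$ for every prime yields $3^k=2^\ell$ from $p=2,3$ and hence $\tau=0$ by unique factorisation, contradicting $|\tau|\geq\e_0/2$. The substantive difficulty is the case $|\tau_n|\to\infty$. A preliminary reduction shows the sequences $s_n,s_n'$ cannot have $\mathrm{Re}(s_n)\to+\infty$ (the asymptotics of the Dirichlet series $\varphi$ and the linear term $c_{\Phi}s$ would then force $u_n\to 0$), so $\mathrm{Re}(s_n)$ stays bounded and $|\mathrm{Im}(s_n)|\to\infty$. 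The condition $e^{-i\tau_n\log p}\to 1$ for all primes translates, via $p=2$, into integers $k_n\to\infty$ with $\tau_n=2\pi k_n/\log 2+o(1)$, and hence $\{k_n\log_2 p\}\to 0$ simultaneously for every odd prime $p$. Excluding this simultaneous Diophantine approximation is the crux of the argument; I expect it to be resolved either by a Bohr lift to the polytorus $\T^{\infty}$ (where the $\log p$ behave as independent characters, ruling out such an approximation) or, equivalently, by employing a carefully chosen non-monomial test function in $\mathcal{A}(\C_+)$ that probes the oscillations the individual $p^{-\Phi}$ cannot detect. This is where most of the technical work is expected to sit.
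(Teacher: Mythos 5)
Your sufficiency argument is sound and, in substance, the same as the paper's: reducing via Bayart's theorem to the uniform continuity of $f\circ\Phi$, using the uniform limit of $f\in\mathcal{A}(\C_+)$ at $+\infty$, and closing the mixed case with an intermediate-value point on the segment $[s,s']$ is precisely the argument the paper runs for the monomials $n^{-s}$ (implication $b)\Rightarrow c)$ of Theorem \ref{thsimal}), with the passage to general $f$ handled there by density and linearity; doing it directly for all $f$ is perfectly fine. One small citation point on the necessity side: from $n^{-\Phi}\in\mathcal{H}^{\infty}$ for all $n$ alone you cannot invoke Bayart's boundedness theorem (you do not yet know that $C_\Phi$ maps all of $\mathcal{H}^{\infty}$ into itself); the correct tool is the Gordon--Hedenmalm/Bayart--Castillo lemma recorded as Theorem \ref{lemma: bay-cas}, which gives the structure $\Phi(s)=c_\Phi s+\varphi(s)$ from exactly that hypothesis. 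That is a minor repair.

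The genuine gap is the one you flag yourself, and it is not merely ``technical work'': the case $|\tau_n|\to\infty$ cannot be excluded by any argument that only uses the endpoint quantities $u_n=\Phi(s_n)-\Phi(s_n')$. By Kronecker's theorem (rational independence of the numbers $\log p$), for every $n$ there is $\tau_n>n$ with $|e^{-i\tau_n\log p_j}-1|<1/n$ for the first $n$ primes; hence sequences with $\mathrm{Re}(u_n)\to 0$, $|\tau_n|\to\infty$ and $p^{-u_n}\to 1$ for every prime $p$ do exist, so the data you have retained is consistent with the bad scenario and neither a Bohr lift to $\T^{\infty}$ nor any Diophantine refinement of it can produce a contradiction. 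The missing idea --- and the paper's --- is to use points of $\C_+$ \emph{between} $s_n$ and $s_n'$, i.e.\ the image under $\Phi$ of the segment $[s_n,s_n']$, together with a single test monomial, say $2^{-\Phi}\in\mathcal{A}(\C_+)$. Either this image leaves $A_{2M}$, and then there are two points at distance less than $|s_n-s_n'|$ at which $|2^{-\Phi}|$ jumps from more than $2^{-M}$ to at most $2^{-2M}$; or it stays in $A_{2M}$, and then (having already disposed of the bounded case, one may assume $|\mathrm{Im}(u_n)|>\pi/\log 2$) the intermediate value theorem yields a point of the segment at which $\mathrm{Im}\,\Phi$ has moved by exactly $\pi/\log 2$ from its value at the endpoint, so the corresponding values of $2^{-\Phi}$ have antipodal arguments and moduli at least $2^{-2M}$, hence differ by at least $2\cdot 2^{-2M}$. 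In both cases the uniform continuity of $2^{-\Phi}$ on $\C_+$ is violated, which is the contradiction you need; this segment/intermediate-value step is exactly what your endpoint-only analysis cannot reach, so as written the necessity direction is incomplete.
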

Furthermore, we show that there are non-uniformly continuous symbols in $\C_+$ belonging to the class $\mathcal{G}_{\mathcal{A}}$. We also prove that $\mathcal{G}_{\mathcal{A}}\subsetneq\mathcal{G}_{\infty}$.

\ 
Section \ref{sec:compactness} is devoted to the description of compact composition operators in the algebra $\mathcal{A}(\C_+)$, as well as the weakly-compact ones (see Theorem \ref{thcomp}). In fact, we prove that these classes of operators coincide. Moreover, we show that the compactness in $\mathcal{A}(\C_+)$ is equivalent to not fixing a copy of $c_0$. The compact composition operators in $\mathcal{H}^{\infty}$ were described by Bayart in \cite{bayarto}. It is also worth mentioning that Lefèvre proved in \cite{pascal} that they coincide with the weakly compact composition operators on $\mathcal{H}^{\infty}$. We go beyond this equivalence proving that every non-compact composition operators on $\mathcal{H}^{\infty}$ fixes a copy of $\ell^{\infty}$ (see Theorem \ref{thcompa}).

\ 
Throughout this work and in connection with the boundedness of composition operators on Banach spaces of Dirichlet series, we will be treating with several classes of Dirichlet series symbols. In Section \ref{sec:closure-classes}, the closure of the three classes $\mathcal{G}$, $\mathcal{G}_{\infty}$ and $\mathcal{G}_{\mathcal{A}}$ under the local uniform convergence in $\C_+$ is studied (see Theorem \ref{closeness} and Theorem \ref{th: ga no cerrada}). 
 
\
Once the description of bounded composition operators is settled, in section \ref{sec:semigroupsinalg} we focus our attention on the study of semigroups of composition operators on the algebra $\mathcal{A}(\C_+)$. Berkson and Porta's work \cite{porta} initiated the study of such semigrous in the context of the classical Hardy space of the unit disc $H^2(\D)$. There, they proved that a semigroup of composition operators $\{C_{\Phi_{t}}\}$ is strongly continuous in $H^2(\D)$ if and only if the family of functions  $\{\Phi_{t}\}$ is a continuous semigroup of holomorphic functions in the unit disc. Thereafter, several works studying these semigroups in other contexts such has the Bergman spaces, the disc algebra, the Bloch space or BMOA arose. See \cite{Anderson}, \cite{Arevalo-Contreras-Piazza}, \cite{Avicou}, \cite{Betsakos}, \cite{BCDMPS}, \cite{Gallardo} and references therein. An analogue version of Berkson and Porta's theorem in the Hardy spaces of Dirichlet series $\mathcal{H}^p$ for $1\leq p<\infty$ was established in \cite{noi}. More specifically, the semigroups $\{C_{\Phi_{t}}\}$ are strongly continuous if and only if the semigroup $\{\Phi_t\}$ of symbols in $\mathcal{G}$ is continuous. Surprisingly, for these continuous semigroups, the convergence on compact subsets of $\C_+$ happens to be equivalent to the convergence in half-planes $\C_{\varepsilon}$ (\cite[Theorem 4.6]{noi}). Afterwards, an even more striking result was found in \cite[Theorem 1.2]{noi1}: every continuous semigroup $\{\Phi_{t}\}$ in the Gordon-Hedenmalm class $\mathcal{G}$ converges to the identity map uniformly in the right half-plane as $t$ tends to $0$. A similar result was proven in the context of the unit disc algebra $A(\D)$.

\ 
In section \ref{sec:semigroupsinalg}, using that a continuous semigroup $\{\Phi_t\}$ in the class $\mathcal{G}_{\infty}$ is of the form $\Phi_t(s)=s+\varphi_t(s)$, $\varphi_t\in\mathcal{D}$, (see \cite[Proposition 3.2]{noi}) the correspondence between strongly continuous semigroups of composition operators in $\mathcal{A}(\C_+)$ and continuous semigroups of symbols in the class $\mathcal{G}_{\mathcal{A}}$ is settled in Theorem \ref{teor:fcont-uniform}. 

\ 
In Section \ref{sec:semigroupsinalg}, some properties of the Koenigs functions of the continuous semigroups in the class $\mathcal{G}_{\mathcal{A}}$ are studied. These are used to provide some interesting examples of continuous semigroups in $\mathcal{G}_{\infty}$. For instance, semigroups in the class $\mathcal{G}_{\infty}$ which are not in the class $\mathcal{G}_{\mathcal{A}}$ (see Theorem \ref{th: sem ginf no ga}). 

\ 
Section \ref{sec:semanfunc} is devoted to recall some notions related to continuous semigroups of analytic functions and their infinitesimal generators. Moreover, we also state those theorems from \cite{noi} and \cite{noi1} that will be needed to prove the results of the subsequent sections.

\section{The algebra of Dirichlet series $\mathcal{A}(\C_+)$ and its composition operators}\label{sec:compopalgb}
\subsection{Dirichlet series}
We denote by $\mathcal{D}$ the space of convergent Dirichlet series. That is, the family of series
$$
\varphi(s)=\sum_{n=1}^{\infty}a_nn^{-s}, \quad \{a_n\}\in\C
$$
which converge in a certain half-plane $\C_{\theta}$, $\theta\in\R$. If there are only finitely many non-zero coefficients $a_n$, we say that $\varphi$ is a \emph{Dirichlet polynomial}. If there is only one non-zero coefficient, we then have a \emph{Dirichlet monomial}. In the context of Dirichlet series, the analogue to the radii of convergence of the analytic functions are the abscissae of convergence. The most relevant ones are the following:
$$
\sigma_{c}(\varphi)=\inf\{ \text{Re}(s):  \sum_{n=1}^{\infty}a_nn^{-s} \textrm{ is convergent}\};
$$
$$
\sigma_{b}(\varphi)=\inf\{ \sigma:  \sum_{n=1}^{\infty}a_nn^{-s} \textrm{ is bounded on } \C_{\sigma}\};
$$
$$
\sigma_{u}(\varphi)=\inf\{ \sigma:  \sum_{n=1}^{\infty}a_nn^{-s} \textrm{ is uniformly convergent on } \C_{\sigma}\};
$$
$$
\sigma_{a}(\varphi)=\inf\{ \text{Re}(s):  \sum_{n=1}^{\infty}a_nn^{-s} \textrm{ is absolutely convergent}\}.
$$
In \cite{bohr}, H. Bohr proved that for every Dirichlet series $\varphi$, it holds that $\sigma_u(\varphi)=\sigma_b(\varphi)$. Moreover, there exists the following relation between the abscissae defined above
$$
\sigma _{c}(\varphi)\leq \sigma _{b}(\varphi)=\sigma _{u}(\varphi)\leq \sigma _{a}(\varphi)\leq \sigma _{c}(\varphi)+1.
$$
A proof of these inequalities can be found in either \cite[Section 4.2]{queffelecs} or \cite[Chapter 1]{peris}.

There are some basic results concerning Dirichlet series that will arise throughout the exposition. We state them straightaway. The reader may find a proof of the first one in \cite[Theorem 8.4.1]{queffelecs}. The second one, which is a Montel's-type principle for Dirichlet series, was first established in \cite[Lemma 18]{bayarto} (see also \cite[Theorem 6.3.1]{queffelecs}).
%and see \cite[Remark 1.20]{peris} for a detailed proof of the second one. 
\begin{theorem}\label{queffelecs}
Let $\sigma,\nu\in\R$. Consider $\varphi:\C_{\sigma}\to\C_{\nu}$ analytic such that it can be written as a  Dirichlet series in a certain half-plane. Then, $\sigma_u(\varphi)\leq\sigma $. In particular, $\varphi$ is bounded in $\C_{\sigma+\varepsilon}$ for all $\varepsilon>0$.
\end{theorem}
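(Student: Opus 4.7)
The plan is to reduce the statement to Bohr's equality $\sigma_u(\varphi) = \sigma_b(\varphi)$ recalled just above in the excerpt. Since $\sigma_b$ is the infimum of those $\sigma' \in \R$ for which $\varphi$ is bounded on $\C_{\sigma'}$, it suffices to show that $\varphi$ is bounded on $\C_{\sigma + \varepsilon}$ for every $\varepsilon > 0$; letting $\varepsilon \to 0^+$ then gives $\sigma_u(\varphi) \leq \sigma$. The ``in particular'' clause is immediate, since any uniformly convergent Dirichlet series is bounded.

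The key idea is that the hypothesis $\varphi : \C_\sigma \to \C_\nu$ together with analyticity makes $\varphi$ a holomorphic map between two half-planes, so the Schwarz--Pick lemma applies. From the Dirichlet series representation of $\varphi$ and the inequality $\sigma_a(\varphi) \leq \sigma_c(\varphi) + 1$ recalled in the excerpt, I fix $M > \sigma + \varepsilon$ with $M > \sigma_a(\varphi)$; absolute convergence then provides a constant $K > 0$ with $|\varphi(s)| \leq K$ on $\C_M$.

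For an arbitrary $s \in \C_{\sigma + \varepsilon}$, consider the horizontal translate $s^{\ast} := s + (M - \mathrm{Re}(s)) \in \C_M$. The hyperbolic distance on $\C_\sigma$, estimated from above by the integral of the hyperbolic density along the horizontal segment joining $s$ to $s^{\ast}$, satisfies
\[
d_{\C_\sigma}(s, s^{\ast}) \leq \int_{\mathrm{Re}(s)}^{M} \frac{du}{u - \sigma} = \log \frac{M - \sigma}{\mathrm{Re}(s) - \sigma} \leq \log \frac{M - \sigma}{\varepsilon} =: R.
\]
The Schwarz--Pick lemma then gives $d_{\C_\nu}(\varphi(s), \varphi(s^{\ast})) \leq R$. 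Since $\varphi(s^{\ast})$ lies in the bounded set $\{w \in \C_\nu : |w| \leq K\}$, and hyperbolic balls of fixed radius in the half-plane $\C_\nu$ are Euclidean disks whose radii grow only linearly in the distance from the centre to the boundary, I obtain a uniform estimate $|\varphi(s)| \leq K'$ on $\C_{\sigma + \varepsilon}$, with $K'$ depending only on $K$, $M$, $\sigma$, $\nu$ and $\varepsilon$. Bohr's equality then yields $\sigma_u(\varphi) \leq \sigma + \varepsilon$, and letting $\varepsilon \to 0^+$ closes the argument.

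The only real obstacle, though essentially routine, is making precise the hyperbolic-to-Euclidean comparison in the image half-plane: one needs that a hyperbolic ball of radius $R$ about a point $w_0 \in \C_\nu$ with $\mathrm{Re}(w_0) - \nu \leq K - \nu$ is contained in a Euclidean ball of radius $c(R)(K - \nu)$, which follows from the explicit M\"obius realisation of the hyperbolic metric on $\C_\nu$. Everything else is a direct application of Bohr's fundamental identity $\sigma_u = \sigma_b$ stated in the excerpt.
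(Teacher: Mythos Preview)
The paper does not prove this theorem; it is stated with a reference to \cite[Theorem 8.4.1]{queffelecs}, so there is no in-paper argument to compare against. Your Schwarz--Pick approach is correct and is one of the standard routes to this ``Bohr--Landau'' type statement: the reduction to boundedness via $\sigma_u=\sigma_b$, the anchor point $s^{\ast}$ in the region of absolute convergence (legitimate since $\sigma_a\leq\sigma_c+1<\infty$), and the hyperbolic contraction all work exactly as you describe. The hyperbolic-to-Euclidean comparison you flag is indeed routine, since in $\C_\nu$ the hyperbolic ball of radius $R$ about $w_0$ is a Euclidean disc whose radius is proportional to $\mathrm{Re}(w_0)-\nu$, and $|\varphi(s^{\ast})|\leq K$ bounds this quantity by $K-\nu$. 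An essentially equivalent variant, and perhaps the one in the cited reference, replaces Schwarz--Pick by the Borel--Carath\'eodory inequality applied to $-\varphi$; the two are interchangeable here.
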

An improved version of this theorem can be found in \cite[Theorem 1]{brekou}, where the authors show that $\C_{\nu}$ can be replaced by  $\C\setminus\{\alpha,\beta\}$, $\alpha\not=\beta$.
\begin{corollary}\label{coro:cont unif}
Let $\varphi$ be a Dirichlet series in the conditions of the prior theorem. Then, $\varphi$ is uniformly continuous in every half-plane $\C_{\sigma+\varepsilon}$ for all $\varepsilon>0$.
\end{corollary}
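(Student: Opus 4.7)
The plan is to deduce the corollary directly from Theorem~\ref{queffelecs} together with two elementary observations: that Dirichlet polynomials are uniformly continuous on any half-plane $\C_{\sigma+\varepsilon}$, and that uniform continuity is preserved under uniform limits.

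First, I would fix $\varepsilon>0$ and invoke Theorem~\ref{queffelecs} to obtain $\sigma_u(\varphi)\le \sigma$. By definition of the uniform convergence abscissa (and Bohr's theorem $\sigma_u=\sigma_b$ recalled in the introduction), the partial sums
\[
P_N(s):=\sum_{n=1}^{N} a_n n^{-s}
\]
converge uniformly to $\varphi$ on the half-plane $\C_{\sigma+\varepsilon/2}$, hence in particular uniformly on $\C_{\sigma+\varepsilon}$.

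Second, I would check that each $P_N$ is uniformly continuous on $\C_{\sigma+\varepsilon}$. Since $P_N'(s)=-\sum_{n=1}^{N} a_n (\log n)\,n^{-s}$, one has the bound
\[
|P_N'(s)|\le \sum_{n=1}^{N}|a_n|(\log n)\,n^{-\sigma-\varepsilon}=:L_N<\infty,\qquad s\in\C_{\sigma+\varepsilon},
\]
and since the half-plane $\C_{\sigma+\varepsilon}$ is convex, $P_N$ is $L_N$-Lipschitz there, hence uniformly continuous.

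Third, I would conclude by the standard fact that a uniform limit of uniformly continuous functions is uniformly continuous: given $\delta>0$, choose $N$ large enough that $\|\varphi-P_N\|_{\infty,\C_{\sigma+\varepsilon}}<\delta/3$, then pick $\eta>0$ so that $|P_N(s)-P_N(w)|<\delta/3$ whenever $|s-w|<\eta$, and combine the two estimates by the triangle inequality. There is no real obstacle here; the only point requiring (minimal) attention is extracting genuine uniform convergence on the closed region $\overline{\C_{\sigma+\varepsilon}}$ from Theorem~\ref{queffelecs}, which is handled by the small cushion $\varepsilon/2$ used above.
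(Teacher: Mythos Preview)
Your proof is correct. The paper states the corollary without proof, treating it as an immediate consequence of Theorem~\ref{queffelecs}; your argument via uniform convergence of the partial sums and the $3\varepsilon$-trick is a perfectly valid way to spell out the details. An even shorter route, which the paper uses elsewhere (e.g.\ in the proof of Theorem~\ref{th: ga no cerrada}), is to note that $\varphi$ is bounded on $\C_{\sigma+\varepsilon/2}$ by Theorem~\ref{queffelecs}, so by Cauchy's estimate $\varphi'$ is bounded on $\C_{\sigma+\varepsilon}$, hence $\varphi$ is Lipschitz there.
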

 \begin{theorem}\label{th: bayart}
Let $\{f_n\}$ be a bounded sequence in $\mathcal{H}^{\infty}$. Then, there exist both a subsequence $\{f_{n_k}\}$  and a function $f\in\mathcal{H}^{\infty}$, such that $f_{n_k}$ converges uniformly to $f$ on each half-plane $\C_{\varepsilon}$, for all $\varepsilon>0$.
\end{theorem}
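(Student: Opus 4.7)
The plan is to proceed in three phases: a coefficient-wise diagonal extraction, identification of the candidate limit as a function in $\mathcal{H}^\infty$ via a Vitali argument in $\C_+$, and finally the upgrade from compact convergence to uniform convergence on each half-plane $\C_\varepsilon$.

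Write $f_n(s)=\sum_{k\geq 1} a_k^{(n)} k^{-s}$ and set $M:=\sup_n \|f_n\|_{\mathcal{H}^\infty}$. Since each $f_n$ is bounded on $\C_+$, Theorem~\ref{queffelecs} gives $\sigma_u(f_n)\leq 0$; the standard coefficient-recovery formula
\[
a_k^{(n)}\, k^{-\sigma} = \lim_{T\to\infty}\frac{1}{2T}\int_{-T}^{T} f_n(\sigma+it)\,e^{it\log k}\,dt,\qquad \sigma>0,
\]
combined with the limit $\sigma\to 0^+$, yields $|a_k^{(n)}|\leq M$ for all $n,k$. A Cantor diagonal extraction then produces a subsequence (still denoted $\{f_n\}$) with $a_k^{(n)}\to a_k$ for every $k$, and $|a_k|\leq M$. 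Set $f(s):=\sum_k a_k k^{-s}$. On $\C_2$ the bound $|a_k^{(n)} k^{-s}|\leq M k^{-2}$ permits dominated convergence, giving $f_n(s)\to f(s)$ pointwise. Vitali's theorem applied to the uniformly bounded family $\{f_n\}$ in $\C_+$ then upgrades this to uniform convergence on compacta of $\C_+$ to an analytic $F$ with $|F|\leq M$ and $F=f$ on $\C_2$. Theorem~\ref{queffelecs} applied to $F$ finally forces $\sigma_u(F)\leq 0$, so $F=\sum_k a_k k^{-s}\in\mathcal{H}^\infty$ on all of $\C_+$.

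The decisive step is to promote compact convergence in $\C_+$ to uniform convergence on each (non-compact) half-plane $\C_\varepsilon$. My intended route is the Bohr lift, which identifies each $g\in\mathcal{H}^\infty$ isometrically with a bounded holomorphic function $\widetilde g$ on the open unit ball $B_{c_0}$ of $c_0$ via $g(s)=\widetilde g(p^{-s})$, $p^{-s}=(p_k^{-s})_k$ and $p_k$ the $k$-th prime. For $s\in\C_\varepsilon$ the point $p^{-s}$ lies in
\[
K_\varepsilon:=\{z\in c_0:|z_k|\leq p_k^{-\varepsilon}\text{ for every }k\},
\]
a \emph{compact} subset of $c_0$ (closed, bounded, with $k$-th coordinate uniformly bounded by $p_k^{-\varepsilon}\to 0$). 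A Montel-type theorem for $H^\infty(B_{c_0})$---obtained by diagonalising across an exhausting sequence of finite-dimensional slices, on each of which classical Montel applies---then produces a further subsequence $\{\widetilde{f_{n_j}}\}$ converging uniformly on compacta of $B_{c_0}$, hence uniformly on every $K_\varepsilon$ simultaneously. Pulling back through $s\mapsto p^{-s}$ yields uniform convergence of $f_{n_j}$ to $f$ on each $\C_\varepsilon$. The main obstacle is this infinite-dimensional Montel statement, where one must patch finite-slice convergence into genuine uniform convergence on $c_0$-compact sets; as a backup I would instead exploit Bohr almost-periodicity along vertical lines to transport convergence from a compact rectangle across each strip $\{\varepsilon\leq\Re(s)\leq R\}$, handling $\Re(s)\geq R$ (for $R$ large) by absolute convergence.
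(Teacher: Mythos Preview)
The paper does not supply its own proof of this theorem; it is stated as background and attributed to \cite[Lemma 18]{bayarto} and \cite[Theorem 6.3.1]{queffelecs}. Your proposal is essentially correct and in fact mirrors the argument in the second of these references, which proceeds exactly through the Bohr correspondence $\mathcal{H}^\infty\cong H^\infty(B_{c_0})$ and a Montel principle on the infinite polydisc. So there is nothing to compare against inside the paper itself; your route coincides with one of the cited sources.

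Two technical remarks. First, your appeal to Theorem~\ref{queffelecs} to conclude $\sigma_u(F)\leq 0$ is slightly off: as stated in the paper that theorem requires $F$ to map $\C_+$ into some half-plane $\C_\nu$, whereas your $F$ is merely bounded. What you actually need is Bohr's identity $\sigma_b=\sigma_u$, which the paper invokes separately (see the sentence immediately following the definition of $\mathcal{H}^\infty$). This is cosmetic but worth getting right. Second, you are correct to flag the infinite-dimensional Montel step as the genuine content. The sketch you give---diagonalising over finite-dimensional sections $\D^N$ and then patching via the uniform tail control $|z_k|\leq p_k^{-\varepsilon}$ on your compact $K_\varepsilon$ together with a Schwarz-type estimate in the remaining variables---is the standard way through and does work; this is precisely how \cite[Theorem 6.3.1]{queffelecs} proceeds. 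Note also that once this step is in place, your preliminary Vitali argument in $\C_+$ becomes redundant: uniform convergence on the compacta $K_\varepsilon\subset B_{c_0}$ already gives both the limit function in $\mathcal{H}^\infty$ and the uniform convergence on every $\C_\varepsilon$ in one stroke. Your almost-periodicity backup is also viable and is closer in spirit to direct arguments on vertical lines.
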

The following theorem will also be useful in the proof of several results in forthcoming sections.

\begin{theorem}\label{lemma: bay-cas}
 Suppose that $\Phi:\C_+\to\C$ is a function such that $k^{-\Phi}\in\mathcal{H}^{\infty}$ for
every $k\in\N$. Then $\Phi$ is analytic in $\C_+$ and there exist $c_0\in\N\cup\{0\}$, $\varphi\in\mathcal{D}$ such that $\varphi$
extends to $\C_+$ and $\Phi(s) = c_0s + \varphi(s)$ for all $s\in\C_+$.
\end{theorem}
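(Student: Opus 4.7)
The plan is to follow the Gordon--Hedenmalm/Bayart argument (see \cite{bayarto}) in three steps.

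\textbf{Analyticity of $\Phi$.} For each $p \in \{2,3\}$, $p^{-\Phi} \in \mathcal{H}^\infty$ is analytic and nowhere zero on the simply connected domain $\C_+$, so it admits an analytic logarithm $g_p$. This yields $\Phi(s) = -g_p(s)/\log p + 2\pi i\, m_p(s)/\log p$ for some integer-valued function $m_p : \C_+ \to \Z$. Subtracting the $p=2$ and $p=3$ expressions, the analytic function $-g_2/\log 2 + g_3/\log 3$ takes values in the countable group $(2\pi i/\log 2)\Z + (2\pi i/\log 3)\Z$, and by connectedness of $\C_+$ it is constant. The uniqueness of integer solutions to $m_3 \log 2 - m_2 \log 3 = \text{const}$ (a consequence of $\log 3/\log 2 \notin \Q$) forces $m_2$ and $m_3$ to be constant, whence $\Phi$ is analytic.

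\textbf{Identification of $c_0$.} Write $F_k := k^{-\Phi} = \sum_n a_n^{(k)} n^{-s}$; by Theorem \ref{queffelecs} the series converges uniformly on every $\C_\sigma$, $\sigma>0$. Let $n_0(k)$ be the smallest index with $a_{n_0(k)}^{(k)} \ne 0$. Factoring out the leading monomial, $F_k(s) = a_{n_0(k)}^{(k)} n_0(k)^{-s}(1+o(1))$ uniformly in $t$ as $\sigma \to \infty$, so taking modulus and logarithm gives
\[
\Re\Phi(\sigma+it) = \frac{\log n_0(k)}{\log k}\,\sigma + O(1),
\]
uniformly in $t$. The ratio $\log n_0(k)/\log k$ is therefore independent of $k$; call it $c_0$. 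Applied to $k=2,3$ together with $n_0(k)\in\N$ and the irrationality of $\log 3/\log 2$, this forces $c_0 \in \N\cup\{0\}$ with $n_0(k) = k^{c_0}$.

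\textbf{Dirichlet series for $\varphi$.} Set $\varphi(s) := \Phi(s) - c_0 s$. From Step 2, $2^{-\varphi(s)} = F_2(s)\cdot 2^{c_0 s}$ is bounded on $\C_+$ and tends uniformly in $t$, as $\sigma \to \infty$, to the nonzero constant $a := a_{2^{c_0}}^{(2)}$. Pick $\sigma_0$ so large that $|2^{-\varphi(s)}/a - 1| < 1/2$ on $\C_{\sigma_0}$ and write
\[
\varphi(s) = C_0 - \frac{1}{\log 2}\log\!\bigl(1 + H(s)\bigr), \qquad H(s) := \frac{2^{-\varphi(s)}}{a} - 1,
\]
expanding $\log(1+H)$ as a convergent power series. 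Combining this with the analogous identity obtained from $3^{-\varphi}$, the multiplicative consistency $F_k F_\ell = F_{k\ell}$ together with Bohr's uniqueness theorem for almost periodic Fourier series forces the spectrum of $\varphi$ to lie in $\log\N$, producing a genuine Dirichlet expansion $\varphi(s) = \sum_{m \ge 1} b_m m^{-s}$ on $\C_{\sigma_0}$. The identity principle then identifies this series with $\Phi - c_0 s$ throughout $\C_+$, giving the required $\varphi \in \mathcal{D}$ with analytic extension to $\C_+$.

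The main obstacle is the final step: extracting integer frequencies from what is \emph{a priori} only a generalized Dirichlet series with frequencies in the semigroup generated by $\{\log(n/2^{c_0}) : a_n^{(2)} \ne 0\}$. This is precisely where the hypothesis ``$k^{-\Phi} \in \mathcal{H}^\infty$ for \emph{every} $k \in \N$'' is used in full strength: the multiplicative rigidity across all $k$ (equivalently, the Bohr-lift structure of $k \mapsto k^{-\Phi}$ as a multiplicative family in $H^\infty$ of the infinite polydisc) collapses the spectrum to $\log\N$, and having this for only finitely many $k$ would not suffice.
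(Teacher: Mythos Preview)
The paper does not give its own proof of this theorem: it is stated as a known result and attributed to Gordon--Hedenmalm \cite{gorheda} (implicitly) and to \cite[Lemma~1]{bayart-castillo} and \cite[Theorem~8.3.1]{queffelecs} (explicitly). So there is nothing to compare against except the literature you cite yourself.

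Your outline follows the standard route, and Step~1 (analyticity via two incommensurable logarithms) is clean and correct. In Step~2 there is a subtle gap: from $n_0(2)=2^{c_0}\in\N$ and $n_0(3)=3^{c_0}\in\N$ alone, deducing $c_0\in\N\cup\{0\}$ is \emph{not} a consequence of the irrationality of $\log 3/\log 2$; as stated this is the Four Exponentials Conjecture. You can repair this either by using three primes and invoking the Six Exponentials Theorem, or---closer to the original argument---by exploiting the full multiplicative identity $F_kF_\ell=F_{k\ell}$ together with the Bohr lift to avoid transcendence theory altogether.

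Step~3 is where the proof is genuinely incomplete. Expanding $\log(1+H)$ gives $\varphi$ as a generalized Dirichlet series whose frequencies lie in the additive semigroup generated by $\{\log(n/2^{c_0}):a_n^{(2)}\neq 0\}$, and you correctly identify that collapsing this spectrum to $\log\N$ is the crux. But the sentence ``the multiplicative consistency $F_kF_\ell=F_{k\ell}$ together with Bohr's uniqueness theorem \ldots forces the spectrum of $\varphi$ to lie in $\log\N$'' is an assertion, not an argument. The actual mechanism---that the Bohr lift sends $k\mapsto k^{-\Phi}$ to a multiplicative family on $\D^\infty$, which in turn forces each $p^{-\Phi}$ to depend on the single variable $z_p$ in a way that makes $\varphi$ an ordinary Dirichlet series---requires real work that you have only gestured at in your final paragraph. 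As written, this step would not pass as a proof; you should either carry it out or point precisely to where in \cite{gorheda}, \cite{bayart-castillo}, or \cite{queffelecs} the argument is completed.
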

The statement was implicit in Gordon and Hedenmalm's seminal work \cite{gorheda}. Years later, it was explicitly stated in \cite[Lemma 1]{bayart-castillo} (see also \cite[Theorem 8.3.1]{queffelecs}). 

\subsection{The algebra of Dirichlet series} 
In the present work we shall focus our attention in the algebra of Dirichlet series $\mathcal{A}(\C_+)$. We recall that the space $\mathcal{H}^{\infty}$ consists on those Dirichlet series $\varphi$ converging in $\C_+$ such that
\[
\|\varphi\|_{\infty}=\sup_{s\in\C_+}|\varphi(s)|<\infty.
\]
In fact, thanks to Bohr's theorem, a Dirichlet series convergent in $\C_{\theta}$ for some $\theta>0$ and having a bounded analytic extension to $\C_+$, belongs to the space $\mathcal{H}^{\infty}$. It is also useful to introduce the following family of Banach spaces of Dirichlet, namely, $\mathcal{H}^{\infty}(\C_{\varepsilon})$, $\varepsilon>0$. These spaces consist on those Dirichlet series $\varphi$ converging in $\C_{\varepsilon}$ such that
\[
\|\varphi\|_{\mathcal{H}^{\infty}(\C_{\varepsilon})}:=\sup_{s\in\C_{\varepsilon}}|\varphi(s)|<\infty.
\]
Observe that if we take $\varepsilon=0$ we recover the space $\mathcal{H}^{\infty}$ from above. 
Now, let us give the definition the algebra of Dirichlet series $\mathcal{A}(\C_+)$ and present some of its most basic properties.
\begin{definition}
We define the algebra of Dirichlet series, denoted by $\mathcal{A}(\C_+)$, as the collection of all Dirichlet series
\[
f(s)=\sum_{n=1}^{\infty}a_nn^{-s}
\]
convergent in $\C_+$ and such that they define a uniformly continuous function $f$ on $\C_+$.
\end{definition}
The following theorem contains some basic properties of the algebra $\mathcal{A}(\C_+)$, see \cite[Proposition 2.2, Theorem 2.3]{Aron-Bayart-cia} for a proof.
\begin{theorem}\label{thpropbasa}
The algebra of Dirichlet series $\mathcal{A}(\C_+)$ satisfies the following properties:
\begin{itemize}
    \item[(1)] $\mathcal{A}(\C_+)$ is a closed subspace of $\mathcal{H}^{\infty}$ .
    \item[(2)]$
    \mathcal{A}(\C_+)=\overline{\emph{span}\{n^{-s}:n\in\N\}}^{\|\cdot\|_{\infty}}
   $.
\end{itemize}
\end{theorem}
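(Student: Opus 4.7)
The inclusion $\mathcal{A}(\C_+)\subset\mathcal{H}^{\infty}$ is the substantive part, since uniform continuity alone does not force boundedness on the unbounded half-plane (for instance $s\mapsto s$ is $1$-Lipschitz and unbounded on $\C_+$). What saves us is the Dirichlet-series structure: for $f\in\mathcal{A}(\C_+)$ we have $\sigma_c(f)\leq 0$, and the universal chain $\sigma_a\leq \sigma_c+1$ gives $\sigma_a(f)\leq 1$, so the series is absolutely convergent, hence bounded, on $\C_2$ by some constant $M$. Using the modulus of uniform continuity of $f$ on $\C_+$ (pick $\delta>0$ with $|f(s)-f(s')|<1$ whenever $|s-s'|<\delta$), any $s\in\C_+$ can be joined to $s+2\in\C_2$ by the horizontal segment of length $2$, broken into $\lceil 2/\delta\rceil$ pieces, yielding $|f(s)|\leq M+\lceil 2/\delta\rceil$. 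Closedness of $\mathcal{A}(\C_+)$ in $\mathcal{H}^{\infty}$ is then routine: if $f_n\in\mathcal{A}(\C_+)$ converge to some $f\in\mathcal{H}^{\infty}$ in $\|\cdot\|_{\infty}$, then $f$, as a uniform limit of uniformly continuous functions, is itself uniformly continuous on $\C_+$ and is a convergent Dirichlet series on $\C_+$ by virtue of lying in $\mathcal{H}^{\infty}$.

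\textbf{Plan for (2), easy inclusion.} Each monomial $n^{-s}$ is Lipschitz, hence uniformly continuous on $\C_+$, so every Dirichlet polynomial belongs to $\mathcal{A}(\C_+)$. Combined with the closedness established in (1), this gives
\[
\overline{\spann\{n^{-s}:n\in\N\}}^{\|\cdot\|_{\infty}}\subset \mathcal{A}(\C_+).
\]

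\textbf{Plan for (2), hard inclusion.} I would use a two-stage horizontal-translate approximation. For $f\in\mathcal{A}(\C_+)$ and $\sigma>0$, set $f_{\sigma}(s):=f(s+\sigma)$. The uniform continuity of $f$ on $\C_+$ yields $\|f_{\sigma}-f\|_{\infty}\to 0$ as $\sigma\to 0^+$, reducing the problem to approximating each $f_{\sigma}$ by Dirichlet polynomials in $\|\cdot\|_{\infty}$. Since $f\in\mathcal{H}^{\infty}$ by (1), one has $\sigma_b(f)\leq 0$, and Bohr's identity $\sigma_u=\sigma_b$ gives $\sigma_u(f)\leq 0$. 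Hence the partial sums $S_Nf=\sum_{n\leq N}a_n n^{-s}$ converge uniformly to $f$ on the half-plane $\C_{\sigma}$; translating $s\mapsto s+\sigma$ turns this into the uniform convergence on $\C_+$ of the Dirichlet polynomials
\[
S_Nf(s+\sigma)=\sum_{n\leq N}\bigl(a_n n^{-\sigma}\bigr)n^{-s} \ \longrightarrow \ f_{\sigma}(s).
\]
A diagonal choice of $\sigma\to 0^+$ and $N\to\infty$ places $f$ in $\overline{\spann\{n^{-s}\}}^{\|\cdot\|_{\infty}}$.

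\textbf{Main obstacle.} The only genuinely delicate step is the inclusion $\mathcal{A}(\C_+)\subset\mathcal{H}^{\infty}$, because uniform continuity by itself is too weak for boundedness; one must combine it with the a priori boundedness on $\C_2$ coming from $\sigma_a(f)\leq 1$. Once (1) is in hand, (2) is essentially book-keeping powered by Bohr's theorem $\sigma_u=\sigma_b$ together with the translation trick $f_{\sigma}\to f$.
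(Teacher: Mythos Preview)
Your proof is correct. The paper does not give its own proof of this theorem but refers to \cite[Proposition 2.2, Theorem 2.3]{Aron-Bayart-cia}; however, the immediately following Proposition~\ref{thprop} reproduces exactly the ``chain of short horizontal steps'' argument you use to pass from boundedness on a far half-plane to boundedness on all of $\C_+$, so your treatment of the inclusion $\mathcal{A}(\C_+)\subset\mathcal{H}^{\infty}$ matches the paper's recalled idea essentially verbatim. Your proof of (2) via the translation $f_{\sigma}(s)=f(s+\sigma)$ combined with Bohr's identity $\sigma_u=\sigma_b$ is the standard route and is the one taken in the cited source.
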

Consequently, the algebra $\mathcal{A}(\C_+)$ is a Banach algebra.

\ 

The proof of Proposition 2.2 in \cite{Aron-Bayart-cia} yields the following more general result on the characterisation of the membership to $\mathcal{A}(\C_+)$. We include the proof to recall the ideas  used there.
\begin{proposition}\label{thprop}
    Let $\varphi$ be a uniformly continuous function in $\C_+$, analytic there and such that $\varphi\in\mathcal{D}$. Then, $\varphi\in\mathcal{A}(\C_+)$.
\end{proposition}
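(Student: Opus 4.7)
The plan is to approximate $\varphi$ by its translates $\varphi_\varepsilon(s):=\varphi(s+\varepsilon)$, $\varepsilon>0$, showing (a) that each $\varphi_\varepsilon$ lies in $\mathcal{A}(\C_+)$ and (b) that $\varphi_\varepsilon\to\varphi$ in the $\|\cdot\|_\infty$ norm as $\varepsilon\to 0^+$. Since $\mathcal{A}(\C_+)$ is closed in $\mathcal{H}^{\infty}$ by Theorem \ref{thpropbasa}(1), these two facts together will give $\varphi\in\mathcal{A}(\C_+)$. The bridge between (a) and Theorem \ref{queffelecs} requires a preliminary boundedness argument, which will be the delicate ingredient.

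First I would establish that $\varphi\in\mathcal{H}^{\infty}$. Uniform continuity on $\C_+$ yields a Lipschitz-at-large-scale bound $|\varphi(s)-\varphi(w)|\leq C(1+|s-w|)$: pick $\delta>0$ with $\omega_\varphi(\delta)\leq 1$, subdivide the segment from $s$ to $w$ into $\lceil|s-w|/\delta\rceil$ pieces of length at most $\delta$, and set $C=1/\delta$. On the other hand, because $\varphi\in\mathcal{D}$ converges uniformly on some right half-plane, $\varphi(s)\to a_1$ as $\Re(s)\to\infty$, so there exists $K>0$ with $|\varphi(s)|\leq |a_1|+1$ on $\C_K$. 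Combining the two along the segment from $s$ to $s+K$ yields $|\varphi(s)|\leq |a_1|+1+C(1+K)$ for every $s\in\C_+$, hence $\varphi\in\mathcal{H}^{\infty}$.

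Now Theorem \ref{queffelecs}, applied with any $\nu<-\|\varphi\|_\infty$ so that $\varphi(\C_+)\subset\C_\nu$, gives $\sigma_u(\varphi)\leq 0$. Writing $\varphi(s)=\sum_{n\geq 1}a_nn^{-s}$, the translate has the Dirichlet expansion $\varphi_\varepsilon(s)=\sum_{n\geq 1}(a_nn^{-\varepsilon})n^{-s}$ with $\sigma_u(\varphi_\varepsilon)\leq -\varepsilon<0$, so its partial sums, which are Dirichlet polynomials, converge to $\varphi_\varepsilon$ uniformly on $\C_+$. Therefore $\varphi_\varepsilon\in\mathcal{A}(\C_+)$ by Theorem \ref{thpropbasa}(2). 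Step (b) is then immediate from uniform continuity, since for each $s\in\C_+$ both $s$ and $s+\varepsilon$ lie in $\C_+$ with distance $\varepsilon$:
\[
\|\varphi_\varepsilon-\varphi\|_\infty \;=\; \sup_{s\in\C_+}\bigl|\varphi(s+\varepsilon)-\varphi(s)\bigr| \;\leq\; \omega_\varphi(\varepsilon) \;\xrightarrow[\varepsilon\to 0^+]{}\; 0.
\]
Closedness of $\mathcal{A}(\C_+)$ then delivers $\varphi\in\mathcal{A}(\C_+)$.

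The main obstacle is the first step, because uniform continuity on $\C_+$ does not by itself force boundedness (witness the identity map $s\mapsto s$). What saves us is precisely the Dirichlet-series hypothesis $\varphi\in\mathcal{D}$, which forces decay toward $a_1$ as $\Re(s)\to\infty$ and hence a priori boundedness on some $\C_K$; the uniform continuity then propagates this bound back to all of $\C_+$ at a cost depending only on $K$ and on the modulus $\omega_\varphi$. Everything after that is a routine application of Bohr's theorem ($\sigma_u=\sigma_b$) and of the density characterisation in Theorem \ref{thpropbasa}(2).
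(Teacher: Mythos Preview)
Your proof is correct, and the crucial first step---showing $\varphi\in\mathcal{H}^{\infty}$ by using uniform continuity to propagate boundedness from a far-right half-plane back to all of $\C_+$---is exactly the argument in the paper. The paper carries it out by subdividing the horizontal segment $[s,s+\nu]$ into $k$ pieces of length $<\delta$ (with $\omega_\varphi(\delta)\leq 1$), obtaining $|\varphi(s)|\leq k+M$; your ``Lipschitz at large scale'' formulation is the same idea.

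Where you diverge is after establishing $\varphi\in\mathcal{H}^{\infty}$. The paper simply stops there: by Bohr's theorem the Dirichlet series converges on $\C_+$, and since $\varphi$ is uniformly continuous by hypothesis, $\varphi\in\mathcal{A}(\C_+)$ \emph{by the very definition} of $\mathcal{A}(\C_+)$ (Dirichlet series convergent in $\C_+$ and uniformly continuous there). Your route via translates $\varphi_\varepsilon$, the density characterisation Theorem~\ref{thpropbasa}(2), and closedness is valid but unnecessary---it effectively re-proves the equivalence of the two descriptions of $\mathcal{A}(\C_+)$ already recorded in Theorem~\ref{thpropbasa}. So the approaches agree on the substantive point; yours just takes a detour at the end.
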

\begin{proof}
We want to show that the Dirichlet series defining $\varphi$ is in fact bounded in the whole right half-plane and by Bohr's theorem it also converges there. Since $\varphi\in\mathcal{D}$, there exists $\nu\in\R$ such that $\varphi$ is bounded in the closed half-plane $\overline{\C}_{\nu}$ by, say, $M>0$. We suppose that $\nu>0$, otherwise there is nothing to prove. By hypothesis, there exists $\delta>0$ such that $|\varphi(s_1)-\varphi(s_2)|<1$ for every $s_1,s_2\in \{s\in\C_+:\text{Re}(s)<\nu\}$ such that $|s_1-s_2|<\delta$. Take $k\in\N$ such that $\frac1k<\delta$. We consider $s\in\C_+$ with $\text{Re}(s)<\nu$. Then,
\begin{align*}
    |\varphi(s)|\leq\sum_{j=0}^{k-1}\left|
    \varphi\left(s+\left(\frac{j}{k}\right)\nu\right)-\varphi\left(s+\left(\frac{j+1}{k}\right)\nu\right)
    \right|
    +\left|\varphi\left(s+\nu\right)\right|\leq k+M.
\end{align*}
Then, the Dirichlet series $\varphi$ is bounded in $\C_+$ and, consequently, it converges there.
\end{proof}
\begin{remark}
    Notice that we have only used the uniform continuity of $\varphi$ in the vertical strip $\Omega=\{s\in\C_+:\text{Re}(s)\leq\sigma_u(\varphi)\}$, so we can ask $\varphi$ just to be a somewhere convergent Dirichlet series which is uniformly continuous in the strip $\Omega$.
\end{remark}
The algebra $\mathcal{A}(\C_+)$ is another example of how Dirichlet series behave differently to the analytic functions in the unit disc $\D$. Indeed, in the unit disc setting, the space of analytic functions which are uniformly continuous in $\D$ coincides with those analytic functions having a continuous extension to $\partial\D$. This is no longer true in the case of Dirichlet series.  Denote by $\mathcal{B}$ the class of functions in $\mathcal{H}^{\infty}$ having an extension to $\partial\C_+$. Trivially, $\mathcal{B}$ is also a closed Banach subalgebra of $\mathcal{H}^{\infty}$ containing $\mathcal{A}(\C_+)$. In \cite{pascal}, Lefèvre studied composition operators from $\mathcal{B}$ into $\mathcal{H}^{\infty}$. We see now that $\mathcal{A}(\C_+)\subsetneq\mathcal{B}$. 
\begin{proposition}\label{algebrab}
There exists a Dirichlet series belonging to $\mathcal{H}^{\infty}$ such that it has a continuous extension to $\partial\C_+$ but it is not uniformly continuous in $\C_+$.
\end{proposition}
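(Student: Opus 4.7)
The plan is to exhibit an explicit Dirichlet series $f(s)=\sum_{n\ge 1}a_n n^{-s}$ and to verify the three required properties: (i) $f\in\mathcal{H}^{\infty}$, (ii) $f$ extends continuously to $\partial\C_+$, and (iii) $f$ is not uniformly continuous on $\C_+$. A key preliminary observation is that one cannot have $\sum_{n}|a_n|<\infty$: in that case the termwise estimate $|n^{-s_1}-n^{-s_2}|\le|s_1-s_2|\log n$ applied to the partial sums, combined with uniform approximation by Dirichlet polynomials, would place $f$ in $\mathcal{A}(\C_+)$ by Theorem \ref{thpropbasa}(2). Moreover, since for any convergent Dirichlet series in $\mathcal{H}^\infty$ one has $f(\sigma+it)\to a_1$ uniformly in $t$ as $\sigma\to\infty$, any failure of uniform continuity must be localised near the imaginary axis with large imaginary part.

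The verification for a concrete $f$ would proceed in three steps. For (i), boundedness on $\C_+$ is to be established by a summation-by-parts / Abel argument, or equivalently via Bohr's identity $\sigma_b=\sigma_u$ together with cancellation estimates that push convergence from a right half-plane down to $\C_+$. For (ii), continuity of the extension to $\partial\C_+$ is obtained from uniform convergence of the partial sums on neighbourhoods of every point $it_0\in i\R$, so that the limit inherits continuity at all boundary points. For (iii), we produce two sequences $(s_n),(s_n')\subset \C_+$ with $|s_n-s_n'|\to 0$, necessarily with $\text{Re}(s_n)\to 0$ and $|\text{Im}(s_n)|\to\infty$ by the observation above, such that $|f(s_n)-f(s_n')|$ is bounded below by a positive constant. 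Intuitively, the analytic continuation of $f$ into $\C_+$ exhibits increasingly rapid phase oscillations whose amplitude does not decay, even though the boundary function is continuous.

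The main obstacle is the construction of $f$ itself: one needs coefficients $a_n$ with $\sum|a_n|=\infty$ whose partial sums stay uniformly bounded on $\C_+$ thanks to cancellation, while simultaneously the boundary function $f(it)$ is continuous and the analytic extension produces the required non-uniform behaviour. A natural family of candidates is obtained by spacing out lacunary or semi-lacunary indices $n_k$ with carefully chosen phases $a_{n_k}$; however, for purely lacunary Fourier series boundedness forces absolute summability (Sidon's theorem), so $f$ must have a sufficiently non-lacunary arithmetic structure. Alternatively, one may work via the Bohr lift and choose a bounded holomorphic function on the infinite polytorus that has a continuous extension to the appropriate boundary but does not lie in the $\|\cdot\|_\infty$-closure of polynomials, since by Theorem \ref{thpropbasa}(2) the latter closure is exactly $\mathcal{A}(\C_+)$.
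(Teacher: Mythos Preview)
Your proposal is a plan, not a proof: you describe the three properties the example must satisfy and discuss obstructions, but you never actually exhibit a Dirichlet series $f$. The discussion of lacunary coefficients and of the Bohr lift remains at the level of vague suggestions, and neither route is carried out. In particular, your step (iii) requires producing concrete sequences $(s_n),(s_n')$ with $|f(s_n)-f(s_n')|$ bounded below, and this cannot be done without first specifying $f$. As it stands, the proposal does not establish the proposition.

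The paper's argument avoids all the difficulties you anticipate by \emph{not} trying to build $f$ coefficient by coefficient. Instead it writes $F=f\circ\varphi$ where $\varphi(s)=\tfrac12 2^{-s}-\tfrac12 3^{-s}$ is a Dirichlet polynomial mapping $\C_+$ into $\D$ with an obvious continuous extension to $\partial\C_+$, and $f(z)=\exp\bigl(i\log T(z)\bigr)$ with $T(z)=(1+z)/(1-z)$ is bounded on $\D$ and continuous on $\overline{\D}\setminus\{1\}$. The key number-theoretic input is that $\varphi(it)\ne 1$ for every $t\in\R$ (because $\log 3/\log 2\notin\mathbb Q$), so $F$ extends continuously to $i\R$; but Kronecker's theorem gives $t_n$ with $\varphi(it_n)\to 1$, and near these points one exploits the blow-up of $T$ at $1$ to produce pairs $s_n,r_n\in\C_+$ with $|s_n-r_n|\to 0$ and $|F(s_n)-F(r_n)|\ge 2e^{-\pi/2}$. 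Boundedness of $F$ is immediate since $f(\D)$ lies in an annulus, so no Abel summation or cancellation argument is needed at all. This composition trick --- a bounded outer function with a single boundary singularity, precomposed with a Dirichlet polynomial whose boundary values avoid but accumulate at that singularity --- is the missing idea in your proposal.
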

\begin{proof}
We let $T:\D\to\C_+$ be the conformal mapping $T(z)=\frac{1+z}{1-z}$. Define the map $\varphi(s)=\frac122^{-s}-\frac123^{-s}$. We claim that 
\[
    \frac122^{-it}-\frac123^{-it}\not=1,\quad\text{for all $t\in\R$}.
    \]
Indeed, if there were a $t_0\in\R$ such that
    \[
    2^{-it_0}-3^{-it_0}=2,
    \]
then, we would have that $t_0=2\pi k/\log2$ and $t_0=\pi(2l+1)/\log3$, for $k,l\in\Z$. In particular, this would imply that $\log3/\log2$ is a rational number, which constitutes a contradiction. Therefore, there is no real number $t$ such that $\varphi(it)=1$. Now, by Kronecker's Theorem, there exists a sequence $\{t_n\}$ of real numbers such that $\varphi(it_n)\to1$ as $n\to\infty$. Let $\{h_n\}$ be a sequence of positive real numbers such that $h_n\to0$. Then, we set $s_n=h_n+it_n$. 
    \
    We claim the existence of a sequence $\{r_n\}$ in $\C_+$ such that $|r_n-s_n|\to0$ as $n\to\infty$ and, setting $z_n=\varphi(s_n)$, $w_n=\varphi(r_n)$, that
\[
i) \ z_n\to1,
\qquad
ii) \ \left|
\frac{T(z_n)}{T(w_n)}
\right|=e^{\pi}.
\]
Regarding $i)$, we know that $\varphi(it_n)\to1$. By the choice of $h_n$'s, we still have that $z_n=\varphi(h_n+it_n)\to1$. 

\ 
To prove $ii)$, first notice that since $z_n\to1$, we have that $|T(z_n)|\to\infty$. Then, take $\widetilde{s_n}=s_n+\varepsilon_n$, where $\{\varepsilon_n\}$ is a sequence of positive real numbers tending to zero such that for each $n$
\[
\frac12\left( 2^{-\varepsilon_n}+
3^{-\varepsilon_n}
\right)\leq 1-\frac{2e^{\pi}}{|T(z_n)|}.
\]
This gives $ii)$ since, clearly, $|u_n|=|\varphi(\widetilde{s_n})|\leq1/2(2^{-\varepsilon_n}+
3^{-\varepsilon_n})$. So,
\[
|u_n|\leq 1-\frac{2e^{\pi}}{|T(z_n)|}.
\]
This is,
\[
\frac{e^{\pi}}{|T(z_n)|}\leq\frac{1-|u_n|}{2}\leq\frac{|1-u_n|}{|1+u_n|}=\frac{1}{|T(u_n)|}.
\]
Reordering, we eventually get that
\[
\left|\frac{T(z_n)}{T(u_n)}
\right|\geq e^{\pi}.
\]
If we take the segments $\gamma_n(\alpha)=\alpha r_n+(1-\alpha)s_n$, $0\leq\alpha\leq1$, we can find $\alpha_n$ such that $|T(z_n)|/|T(w_n)|=e^\pi$, where $w_n=\varphi(\gamma_n(\alpha_n))$. Hence, claim $ii)$ follows with $r_n=\gamma_n(\alpha_n)$.

\ 
Let $A=\{z\in\C: e^{-\pi/2}<|z|<e^{\pi/2}\}$. Then,  the holomorphic function $f(z)=\exp(i\log T(z))$ maps $\D$ into $A$. Here $\log$ stands for the complex logarithm where the argument is taken in $(-\pi/2,\pi/2)$. The Dirichlet series $F=f\circ\varphi$ is clearly convergent in $\C_+$ and it satisfies the thesis of the Proposition. Indeed, since $\varphi$ can be clearly 
continuously extended to $\partial\C_+$, $1\not\in\varphi(\partial\C_+)$, and $f$ is continuous in $\overline{\D}\setminus\{1\}$, we conclude that $F$ can be continuously extended to $\partial\C_+$. Regarding the failure of the uniform continuity on $\C_+$, consider the sequences $\{s_n\}$ and $\{r_n\}$ from above. By construction, $|s_n-r_n|\to0$ as $n\to\infty$. Now, thanks to part $ii)$ of the claim, we have that
\[
|\text{Re}(\log(T(z_n)))-\text{Re}(\log(T(w_n)))|=\pi.
\]
Hence, 
\[
|\text{Im}(i\log(T(z_n)))-\text{Im}(i\log(T(w_n)))|=\pi.
\]
Recalling the definition of $f$, we find that
\[
|\text{Arg}(f(z_n))-\text{Arg}(f(w_n))|=\pi.  
\]
Hence, $f(z_n)$ and $f(w_n)$ lie in antipodal segments joining the inner circle and the outer circle of the boundary of $A$. Therefore, between these two points we can fit, at least, a ball of radius $e^{-\pi/2}$. Therefore, $|f(z_n)-f(w_n)|\geq 2e^{-\pi/2}$, and the conclusion follows.
\end{proof}
\begin{remark}
    The algebra $\mathcal{B}$ is not separable. We can see this using the ideas from the proof of the latter proposition. Let $$M=\left\{2^{-it}:t=\frac{(2k+1)\pi}{\log3-\log2}, \ k\in\Z\right\}.$$ For $\tau\in\partial\D$, define $f_{\tau}(z)=\exp(i\log T(\overline{\tau }z))$, $z\in\D$. Then, we consider $F_{\tau}=f_{\tau}\circ\varphi$, where $\varphi(s)=\frac122^{-s}-\frac123^{-s}$. We claim the following:
   \begin{itemize}
       \item[i)] If $\tau\in \partial\D\setminus M$, then $F_{\tau}\in\mathcal{B}$.
       \item[ii)] For $\tau,\tau'\in\partial\D\setminus M$, $\tau\not=\tau'$:
       \[
       \|F_{\tau}-F_{\tau'}\|_{\infty}\geq e^{-\pi/2}.
       \]
   \end{itemize}
    We begin by showing i). The function $f_{\tau}$ fails to be continuous whenever $\overline{\tau}z=1$. Hence, so does $F_{\tau}$  for those $t\in\R$ such that $\varphi(it)=\tau$. If this occurs, we have that,
    \[
    \frac122^{-it}-\frac123^{-it}=\tau.
    \]
    This is, the middle-point $\tau$ of the points $2^{-it}$ and $-3^{-it}$ in $\partial\D$ lies also in $\partial\D$. This implies that $2^{-it}=-3^{-it}=\tau$. Or, equivalently, $2^{it}3^{-it}=-1$. This forces $t=\frac{(2k+1)\pi}{\log3-\log2}$, $k\in\Z$, and the claim follows.

    \ 
    Now, we prove ii). Fix $\tau,\tau'\in\partial\D\setminus M$, $\tau\not=\tau'$. By Kronecker's Theorem (see \cite[Proposition 3.4]{peris}), there exists a sequence $\{t_n\}$ of real numbers such that $\varphi(it_n)\to\tau$ as $n\to\infty$. Let $\{h_n\}$ be a sequence of positive real numbers such that $h_n\to0$. We set $s_n=h_n+it_n$. Mimicking the construction from the proof of Proposition \ref{algebrab}, we can find a sequence $\{r_n\}\in\C_+$, $\varphi(r_n)\to\tau$, such that 
    \begin{equation}\label{acoinf}
        |F_{\tau}(s_n)-F_{\tau}(r_n)|\geq 2e^{-\pi/2}.
    \end{equation}
    Since $\tau\not=\tau'$, $f_{\tau'}$ can be continuously extended to $\overline{\D}\setminus\{\tau'\}$ and $\varphi(s_n),\varphi(r_n)\to\tau$, as $n\to\infty$, we have that
    \begin{equation}\label{tiendecero}
        |F_{\tau'}(s_n)-F_{\tau'}(r_n)|\to0.
    \end{equation}
    Using both \eqref{acoinf} and \eqref{tiendecero}, 
    \begin{align*}
        2e^{-\pi/2}\leq |F_{\tau}(s_n)-F_{\tau}(r_n)|\leq 2\|F_{\tau'}-F_{\tau}\|_{\infty}+ |F_{\tau'}(s_n)-F_{\tau'}(r_n)|
    \end{align*}
    for all $n\in\N$. Letting $n\to\infty$, the claim ii) follows.

    \ 
    We have shown that the are uncountable many disjoint open sets in $\mathcal{B}$, giving the non-separability of the algebra $\mathcal{B}$.
\end{remark}

\subsection{Composition operators}
In this section, we give a necessary and sufficient condition for the boundedness of the composition operators $C_{\Phi}:\mathcal{A}(\C_+)\to\mathcal{A}(\C_+)$. This condition will be given in terms of some properties satisfied by the symbols $\Phi$. In the seminal work \cite{gorheda}, Gordon and Hedenmalm characterised the symbols giving rise to bounded composition operators in the Hilbert Dirichlet space $\mathcal{H}^2$. To do so, they introduced the now so-called \emph{Gordon-Hedenmalm class} (see Definition \ref{def: gorhed}). Gordon and Hedenmalm proved the following characterisation.
\begin{theorem}[Gordon-Hedenmalm]\label{gordon}
An analytic function $\Phi:\C_{\frac12}\to\C_{\frac12}$ defines a bounded composition operator $\mathcal{C}_{\Phi}:\mathcal{H}^2\to\mathcal{H}^2$ if and only if $\Phi$ has a holomorphic extension to $\C_{+}$ that  belongs to the class $\mathcal{G}$. 
\end{theorem}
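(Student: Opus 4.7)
The strategy is to establish the two implications separately, with Theorem~\ref{lemma: bay-cas} (Bayart--Castillo) providing the bridge from operator-theoretic information to the structural decomposition $\Phi(s)=c_\Phi s+\varphi(s)$.

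\textbf{Necessity.} Assume $C_\Phi$ is bounded on $\mathcal{H}^2$. The space $\mathcal{H}^2$ is a reproducing kernel Hilbert space with kernel at $w\in\C_{1/2}$ given by $k_w(s)=\zeta(s+\bar w)$ and $\|k_w\|_{\mathcal{H}^2}^2=\zeta(2\Re w)$, and a direct computation shows $C_\Phi^*k_w=k_{\Phi(w)}$. Hence
\[
\zeta(2\Re\Phi(w))=\|k_{\Phi(w)}\|_{\mathcal{H}^2}^2\leq \|C_\Phi\|^2\,\zeta(2\Re w),\qquad w\in\C_{1/2}.
\]
Letting $\Re w\to+\infty$ forces $\Re\Phi(w)$ to stay uniformly bounded away from $1/2$. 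Combining this with the pointwise estimate $|f(w)|\leq \|f\|_{\mathcal{H}^2}\,\zeta(2\Re w)^{1/2}$ for $f\in\mathcal{H}^2$ (which is just $|\langle f,k_w\rangle|\leq\|f\|\,\|k_w\|$), each $k^{-\Phi}=C_\Phi(k^{-s})$, $k\in\N$, turns out to be bounded on $\C_{1/2+\varepsilon}$ uniformly in $\varepsilon>0$; verifying that $k^{-\Phi}$ admits a Dirichlet series representation and invoking Bohr's theorem gives $k^{-\Phi}\in\mathcal{H}^\infty$. Theorem~\ref{lemma: bay-cas} then yields $\Phi(s)=c_\Phi s+\varphi(s)$ with $c_\Phi\in\N\cup\{0\}$ and $\varphi\in\mathcal{D}$ extending to $\C_+$, so $\Phi\in\mathcal{G}_\infty$. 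When $c_\Phi=0$, the inclusion $\Phi(\C_{1/2})\subset\C_{1/2}$ propagates to $\Phi(\C_+)\subset\C_{1/2}$ (using Theorem~\ref{queffelecs} applied to $\varphi$ together with the open mapping principle), giving $\Phi\in\mathcal{G}$.

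\textbf{Sufficiency.} Suppose $\Phi\in\mathcal{G}$, and write $\Phi(s)=c_\Phi s+\varphi(s)$. Since Dirichlet polynomials are dense in $\mathcal{H}^2$, it suffices to control $\|C_\Phi D\|_{\mathcal{H}^2}$ by a constant times $\|D\|_{\mathcal{H}^2}$ for polynomials $D=\sum_{n\leq N}a_n n^{-s}$. The canonical device is the Bohr correspondence, which identifies $\mathcal{H}^2$ isometrically with the Hardy space $H^2(\T^\infty)$ of the infinite-dimensional polytorus by mapping $p_j^{-s}$ to the $j$-th coordinate variable. The operator $C_\Phi$ is read off as an operator on $H^2(\T^\infty)$ induced by the Bohr lift of $\Phi$. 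When $c_\Phi\geq 1$, one factors $n^{-\Phi(s)}=n^{-c_\Phi s}\,n^{-\varphi(s)}$; the uniform boundedness of $n^{-\varphi}$ on $\C_+$ granted by Theorem~\ref{queffelecs} together with the decay of $n^{-c_\Phi s}$ makes the orthogonality arithmetic in $\mathcal{H}^2$ close up and produces the required estimate. When $c_\Phi=0$, $\Phi=\varphi$ sends $\C_+$ into $\C_{1/2}$, and the bound is obtained by transferring to $H^2(\T^\infty)$ and exploiting that $\Re\varphi>1/2$ supplies exactly the $L^2$ mass condition needed, the sharp threshold coming from the fact that $\zeta(2\sigma)<\infty$ if and only if $\sigma>1/2$.

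\textbf{Main obstacle.} The technical heart lies in the sufficiency when $c_\Phi=0$: showing that $f\circ\varphi$ belongs to $\mathcal{H}^2$ with controlled norm for every $f\in\mathcal{H}^2$, using only that $\varphi$ maps $\C_+$ into $\C_{1/2}$. This requires the full machinery of the Bohr lift, together with a subordination-type argument on the infinite-dimensional polytorus that simultaneously controls the infinitely many frequencies hidden in a single Dirichlet series. The necessity half, by contrast, is comparatively clean once the reproducing-kernel inequality and Theorem~\ref{lemma: bay-cas} are available.
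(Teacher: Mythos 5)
This statement is quoted background: the paper does not prove Theorem \ref{gordon} but cites it from the original work of Gordon and Hedenmalm, so there is no internal proof to compare with; judged on its own, your proposal has genuine gaps in both directions. In the necessity half, the appeal to Theorem \ref{lemma: bay-cas} is circular: that lemma requires $\Phi$ to be defined on all of $\C_+$ with $k^{-\Phi}\in\mathcal{H}^{\infty}$, whereas a priori $\Phi$ lives only on $\C_{1/2}$ and $k^{-\Phi}=C_{\Phi}(k^{-s})\in\mathcal{H}^2$ gives a Dirichlet series convergent only in $\C_{1/2}$. Your intermediate step is also off: the pointwise bound $|f(w)|\leq\|f\|\,\zeta(2\Re w)^{1/2}$ blows up as $\Re w\to 1/2^+$, so it does not give a bound ``uniform in $\varepsilon$'' (the trivial bound $|k^{-\Phi}|\leq k^{-1/2}$ on $\C_{1/2}$ does, but that only yields $k^{-\Phi}\in\mathcal{H}^{\infty}(\C_{1/2})$); Bohr's theorem cannot upgrade this to $\mathcal{H}^{\infty}$ because it presupposes a bounded analytic extension to $\C_+$, which is exactly the extension the theorem asserts. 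The honest route is a half-plane version of the structure theorem, giving $\Phi(s)=c_{\Phi}s+\varphi(s)$ on $\C_{1/2}$ with $\varphi$ convergent there, followed by a separate argument (in \cite{gorheda}, via vertical limits/almost periodicity combined with the boundedness of $C_{\Phi}$) to continue $\varphi$ analytically to $\C_+$ and to prove the mapping conditions $\Phi(\C_+)\subset\C_+$ (resp. $\subset\C_{1/2}$ when $c_{\Phi}=0$); your parenthetical appeal to Theorem \ref{queffelecs} plus the open mapping principle only controls $\varphi$ where it is already known to converge and map into a half-plane, so it does not produce the extension or its range.

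In the sufficiency half, no proof is actually given: both the case $c_{\Phi}\geq 1$ (``the orthogonality arithmetic closes up'') and the case $c_{\Phi}=0$ (``supplies exactly the $L^2$ mass condition'') are placeholders, and your ``Main obstacle'' paragraph concedes that the Bohr lift and a subordination-type argument are needed without carrying them out. This is precisely where the work lies in Gordon--Hedenmalm's argument: one passes to vertical limits $f_{\chi}$, uses their almost sure convergence in $\C_+$, applies Littlewood-type subordination in half-planes for each character, and averages over $\T^{\infty}$, with the threshold $1/2$ entering through $\zeta(2\sigma)<\infty$ iff $\sigma>1/2$. Also note that ``uniform boundedness of $n^{-\varphi}$ on $\C_+$'' is not granted by Theorem \ref{queffelecs} (which gives boundedness only on each $\C_{\varepsilon}$, and $\Re\varphi$ need not be positive when $c_{\Phi}\geq1$). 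As written, the proposal is a plausible road map whose two decisive steps — the analytic continuation with its mapping properties, and the norm estimate for the composition — are asserted rather than proven.
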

The characterisation of the boundedness of composition operators on $\mathcal{H}^\infty$  was given by Bayart in \cite{bayarto} (see also \cite[Proposition 2]{bayart-castillo}).
\begin{theorem}[Bayart]\label{boundedness-Hinfty}
A function $\Phi:\C_{+}\to\C_{+}$ defines a bounded composition operator $\mathcal{C}_{\Phi}:\mathcal{H}^\infty\to\mathcal{H}^\infty$ if and only if $\Phi$ belongs to the class $\mathcal G_{\infty}$.
\end{theorem}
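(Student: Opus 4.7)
The plan is to handle the two implications separately, with necessity following almost immediately from the structural Theorem~\ref{lemma: bay-cas} and sufficiency requiring a two-stage construction plus an approximation argument. For necessity, assume $C_\Phi:\mathcal{H}^{\infty}\to\mathcal{H}^{\infty}$ is bounded. Applying it to each Dirichlet monomial $n^{-s}\in\mathcal{H}^{\infty}$ (of norm one) yields $n^{-\Phi}\in\mathcal{H}^{\infty}$ for every $n\in\N$. Theorem~\ref{lemma: bay-cas} then directly supplies $c_\Phi\in\N\cup\{0\}$ and $\varphi\in\mathcal{D}$ with $\Phi(s)=c_\Phi s+\varphi(s)$, which, together with the standing hypothesis $\Phi(\C_+)\subset\C_+$, is precisely the statement $\Phi\in\mathcal{G}_{\infty}$.

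For sufficiency, suppose $\Phi=c_\Phi s+\varphi\in\mathcal{G}_{\infty}$ with $\varphi\in\mathcal{D}$. The first stage is to show that each $n^{-\Phi}$ lies in the unit ball of $\mathcal{H}^{\infty}$. Writing $\varphi=a_1+\psi$, where $a_1$ is the constant term of $\varphi$ and $\psi\in\mathcal{D}$ has no constant term (so $\psi(s)\to 0$ as $\mathrm{Re}(s)\to+\infty$), one expands formally
\[
n^{-\Phi(s)}=(n^{c_\Phi})^{-s}\,n^{-a_1}\sum_{k=0}^{\infty}\frac{(-\log n)^k}{k!}\psi(s)^k.
\]
The $k$-th term is a Dirichlet series supported on integers $\geq 2^k$, so on any half-plane where $|\psi|$ is sufficiently small the series converges absolutely and represents $n^{-\Phi}$ as a Dirichlet series. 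The hypothesis $\Phi(\C_+)\subset\C_+$ forces $|n^{-\Phi(s)}|\leq 1$ throughout $\C_+$, and Bohr's theorem (as recalled in the text) then upgrades convergence to all of $\C_+$, giving $n^{-\Phi}\in\mathcal{H}^{\infty}$ with norm at most one. By linearity, $C_\Phi P$ is a Dirichlet series with $\|C_\Phi P\|_\infty\leq\|P\|_\infty$ for every Dirichlet polynomial $P$.

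The second stage would pass from polynomials to general $f\in\mathcal{H}^{\infty}$ by means of a Fejér/Cesàro-type approximation: a sequence $\{P_N\}$ of Dirichlet polynomials with $\|P_N\|_\infty\leq\|f\|_\infty$ such that $P_N\to f$ uniformly on every half-plane $\C_\varepsilon$. Then $\{C_\Phi P_N\}$ is uniformly bounded in $\mathcal{H}^{\infty}$ and converges pointwise in $\C_+$ to $f\circ\Phi$; the Montel-type Theorem~\ref{th: bayart} extracts a subsequence converging uniformly on every $\C_\varepsilon$ to some element of $\mathcal{H}^{\infty}$, which can only be $f\circ\Phi$. This yields $C_\Phi f\in\mathcal{H}^{\infty}$ with $\|C_\Phi f\|_\infty\leq\|f\|_\infty$.

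The main obstacle I expect is the first stage of the sufficiency, namely realizing $n^{-\Phi}$ as a genuine Dirichlet series. The integrality of $c_\Phi$ is essential: it is what allows the linear part of $\Phi$ to produce the Dirichlet monomial $(n^{c_\Phi})^{-s}$, reducing the construction to an exponential-of-Dirichlet-series expansion for the remainder $n^{-\varphi}$. Without this integrality the factor $n^{-c_\Phi s}$ would not be a Dirichlet monomial and the whole Dirichlet structure of $C_\Phi f$ would collapse, even though the bounded-analytic estimate $\|f\circ\Phi\|_\infty\leq\|f\|_\infty$ would remain trivially valid.
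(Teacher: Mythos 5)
You should first note that the paper does not prove this statement at all: it is quoted from Bayart \cite{bayarto} (see also \cite[Proposition 2]{bayart-castillo}), so your attempt can only be compared with the standard literature proof. Your necessity argument is exactly that proof: apply $C_{\Phi}$ to the monomials $n^{-s}$ and invoke Theorem \ref{lemma: bay-cas}. Your first stage of sufficiency (each $n^{-\Phi}$ lies in the unit ball of $\mathcal{H}^{\infty}$) is also the standard exponential-expansion argument; the only caveat is that the rearrangement of $\sum_k \frac{(-\log n)^k}{k!}\psi(s)^k$ into a Dirichlet series should be justified by the smallness of the coefficientwise sum $\sum_{m\geq 2}|b_m|m^{-\sigma}$ in a far right half-plane (which tends to $0$ as $\sigma\to+\infty$ since $\sigma_a(\psi)<\infty$), not by pointwise smallness of $|\psi|$; with that reading the stage is correct, and it is in fact the routine part, not the main obstacle as you suggest.

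The genuine gap is your second stage. You invoke, without proof or reference, the existence for every $f\in\mathcal{H}^{\infty}$ of Dirichlet polynomials $P_N$ with $\|P_N\|_{\infty}\leq\|f\|_{\infty}$ converging to $f$ uniformly on every $\C_{\varepsilon}$. This statement is true, but it is a substantive theorem in its own right: ordinary partial sums fail (the partial-sum projections are not uniformly bounded on $\mathcal{H}^{\infty}$), and the known proofs pass through the Bohr lift $\mathcal{H}^{\infty}\cong H^{\infty}(\T^{\infty})$, conditional expectations onto finitely many variables and Fejér means — machinery of comparable depth to the theorem itself and not among the tools available in this paper (Theorems \ref{queffelecs}, \ref{th: bayart}, \ref{lemma: bay-cas}). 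As written, the sufficiency direction is therefore incomplete. The classical proof avoids this entirely by expanding $f\circ\Phi$ directly: if $c_{\Phi}\geq 1$, then $\mathrm{Re}\,\Phi(s)\to+\infty$ as $\mathrm{Re}\,s\to+\infty$ (since $\varphi$ tends uniformly to its constant coefficient $a_1$), so, using $\sigma_a(f)<\infty$, one substitutes the series of $f$ termwise and the coefficient bounds from your first stage let one rearrange the double sum into a Dirichlet series in a far right half-plane; if $c_{\Phi}=0$, one first checks that $\mathrm{Re}\,a_1>0$ (an almost-periodicity/mean-value argument), so that $\varphi(\C_{\theta})\subset D(a_1,\eta)\subset\C_+$ for $\theta$ large, and then composes the Taylor expansion of $f$ about $a_1$ (with Cauchy estimates on $D(a_1,\mathrm{Re}\,a_1)$) with $\psi=\varphi-a_1$; in both cases Bohr's theorem concludes, exactly as in your first stage. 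To salvage your route you must either prove the norm-controlled polynomial approximation (via the polytorus) or cite it explicitly.
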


The condition $\Phi$ belonging to $\mathcal{G}_{\infty}$ and being uniformly continuous in $\C_+$ is sufficient to ensure the boundedness of the operator $C_{\Phi}$ on $\mathcal{A}(\C_+)$. However, it is no longer necessary (see Example \ref{ex: gea no unifcon}). 
The boundedness of $C_{\Phi}$ on $\mathcal{A}(\C_+)$ is characterised by the membership of $\Phi$ to the class $\mathcal{G}_{\mathcal{A}}$, see Definition \ref{def: ga}, where uniform continuity is only required in the sets 
\[
A_M:=\{s\in\C_+:0<\text{Re}(\Phi(s))<M\},\quad\text{for all $M>0$}.
\]
%We want to give analogues of both Theorem  \ref{gordon} and Theorem \ref{boundedness-Hinfty} for the space $\mathcal{A}(\C_+)$. In order to do so, we begin by introducing the space of symbols $\mathcal{G}_{\mathcal{A}}$.

We now prove the main result of the section.
\begin{theorem}\label{thsimal}
Let $\Phi:\C_+\to\C_+$ be analytic. Then, the following statements are equivalent:
\begin{enumerate}[a)]
    \item $\Phi$ defines a bounded composition operator $C_{\Phi}$ on $\mathcal{A}(\C_+)$.
    \item $\Phi\in\mathcal{G}_{\mathcal{A}}$.
    \item $n^{-\Phi}\in{\mathcal{A}}(\C_+)$ for all $n\in\N$.
    \item $\Phi\in\mathcal{G}_{\infty}$ and $n^{-\Phi}\in \mathcal{A}(\C_+)$ for all $n\in\N$.
    \item $\Phi\in\mathcal{G}_{\infty}$ and there exists some $n\geq2$ such that $n^{-\Phi}\in \mathcal{A}(\C_+)$.
\end{enumerate}
\end{theorem}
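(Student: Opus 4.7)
I would prove the equivalences through the cycle $a)\Rightarrow c)\Rightarrow d)\Rightarrow e)\Rightarrow b)\Rightarrow a)$. Three of the arrows are immediate: $a)\Rightarrow c)$ follows by applying $C_{\Phi}$ to the monomials $n^{-s}\in\mathcal{A}(\C_+)$; $c)\Rightarrow d)$ combines the inclusion $\mathcal{A}(\C_+)\subset\mathcal{H}^{\infty}$ with Theorem \ref{lemma: bay-cas} to extract $\Phi\in\mathcal{G}_{\infty}$; and $d)\Rightarrow e)$ is trivial.

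\textbf{For $b)\Rightarrow a)$.} The central step is to show $g_n:=n^{-\Phi}\in\mathcal{A}(\C_+)$ for every $n\geq 2$. Granting this, the composition of a Dirichlet polynomial with $\Phi$ lies in $\mathcal{A}(\C_+)$ as a finite linear combination of the $g_n$; approximating an arbitrary $f\in\mathcal{A}(\C_+)$ by polynomials (Theorem \ref{thpropbasa}(2)), invoking the $\mathcal{H}^{\infty}$-boundedness of $C_{\Phi}$ (Theorem \ref{boundedness-Hinfty}) and the closedness of $\mathcal{A}(\C_+)$ in $\mathcal{H}^{\infty}$ yields $f\circ\Phi\in\mathcal{A}(\C_+)$. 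To verify $g_n\in\mathcal{A}(\C_+)$, Proposition \ref{thprop} reduces the task to uniform continuity on $\C_+$. Given $\varepsilon>0$, pick $M$ so that $n^{-M}<\varepsilon/4$: then $|g_n|<\varepsilon/4$ on $B_M:=\{s\in\C_+:\text{Re}(\Phi(s))\geq M\}$, while on $A_{M+1}$ the uniform continuity of $\Phi$ (given by $\mathcal{G}_{\mathcal{A}}$) combined with the Lipschitz bound $|n^{-w_1}-n^{-w_2}|\leq\log n\,|w_1-w_2|$ on $\overline{\C_+}$ controls $|g_n(s_1)-g_n(s_2)|$. For $|s_1-s_2|<\delta$ I would consider three cases: both points in $A_{M+1/2}$, both in $B_{M+1/2}$, or mixed. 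The mixed case is handled by applying the intermediate value theorem to $\text{Re}(\Phi)$ along the segment $[s_1,s_2]\subset\C_+$ to produce a point $s^*$ with $\text{Re}(\Phi(s^*))=M+1/2$; a triangle inequality through $s^*$ then closes the estimate.

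\textbf{For $e)\Rightarrow b)$ (the main obstacle).} Assume $\Phi\in\mathcal{G}_{\infty}$ and $n^{-\Phi}\in\mathcal{A}(\C_+)$ for some $n\geq 2$. I argue by contradiction: if $\Phi$ fails to be uniformly continuous on some $A_M$, extract sequences $s_k,s'_k\in A_M$ with $|s_k-s'_k|\to 0$ and $|\Phi(s_k)-\Phi(s'_k)|\geq\rho>0$. Because $|n^{-\Phi}|\geq n^{-M}$ on $A_M$, the uniform continuity of $n^{-\Phi}$ forces $n^{\Phi(s_k)-\Phi(s'_k)}\to 1$, so that $\Phi(s_k)-\Phi(s'_k)=(2\pi i m_k/\log n)+o(1)$ for integers $m_k$ that are eventually nonzero. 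Writing $\Phi=c_{\Phi}s+\varphi$ with $\varphi\in\mathcal{D}$ and using $c_{\Phi}(s_k-s'_k)\to 0$, one extracts along a subsequence $\varphi(s_k)-\varphi(s'_k)\to 2\pi i m/\log n\neq 0$ for a fixed $m\neq 0$. Corollary \ref{coro:cont unif} then forces $\text{Re}(s_k),\text{Re}(s'_k)\to 0$. I would then apply Theorem \ref{th: bayart} to the vertical translates $s\mapsto n^{-\Phi(s+i\,\text{Im}(s_k))}$, which form a bounded sequence in $\mathcal{H}^{\infty}$, to extract a subsequential limit $G\in\mathcal{H}^{\infty}$; identifying $G$ as a Dirichlet series and exploiting its analytic rigidity ultimately conflicts with the nonzero limit of $\varphi(s_k)-\varphi(s'_k)$. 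The delicate point — which I expect to be the technical core — is carrying out this limit argument when the normalised sequences approach $\partial\C_+$, since Theorem \ref{th: bayart} only provides uniform convergence on half-planes $\C_{\varepsilon}$ with $\varepsilon>0$, not up to the boundary.
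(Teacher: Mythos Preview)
Your treatment of the implications $a)\Rightarrow c)\Rightarrow d)\Rightarrow e)$ and of $b)\Rightarrow a)$ matches the paper's argument essentially line for line; in particular your three-case segment argument for the uniform continuity of $n^{-\Phi}$ is exactly what the paper does in its step $b)\Rightarrow c)$.

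The gap is in $e)\Rightarrow b)$. Two issues. First, a minor one: you assert that along a subsequence $\varphi(s_k)-\varphi(s'_k)\to 2\pi i m/\log n$ for a \emph{fixed} nonzero integer $m$, but from $n^{\Phi(s_k)-\Phi(s'_k)}\to 1$ you only get that $\Phi(s_k)-\Phi(s'_k)$ is close to the lattice $(2\pi i/\log n)\Z$; without a bound on $|\varphi(s_k)-\varphi(s'_k)|$ (which you do not have, since $\varphi$ need not be bounded near $\partial\C_+$) the integers $m_k$ may run off to infinity. Second, and more seriously, your plan to finish via vertical translates and Theorem~\ref{th: bayart} is not a proof, as you yourself recognise: you have already shown $\text{Re}(s_k)\to 0$, so after normalising the imaginary parts the points land at distance $\text{Re}(s_k)\to 0$ from $\partial\C_+$, precisely where the Bayart--Montel convergence on half-planes $\C_\varepsilon$ gives no information. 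The vague appeal to ``analytic rigidity'' of the limit $G$ does not close this.

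The paper avoids all of this with an entirely elementary geometric argument, reusing the same segment/IVT idea you employed in $b)\Rightarrow a)$. With $s_k,s'_k\in A_{M_0}$, look at the image of the segment $[s_k,s'_k]$ under $\Phi$. Two cases: (I) the image leaves $A_{2M_0}$, so some $u_k\in[s_k,s'_k]$ has $\text{Re}(\Phi(u_k))\geq 2M_0$; then $|n^{-\Phi(u_k)}|\leq n^{-2M_0}$ while $|n^{-\Phi(s_k)}|\geq n^{-M_0}$, forcing $|n^{-\Phi(u_k)}-n^{-\Phi(s_k)}|\geq n^{-M_0}-n^{-2M_0}$; (II) the image stays in $A_{2M_0}$. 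In case (II), if $|\text{Im}(\Phi(s_k)-\Phi(s'_k))|\leq \pi/\log n$ then after a common vertical shift both $\Phi(s_k),\Phi(s'_k)$ land in a fixed rectangle on which $w\mapsto n^{-w}$ is bilipschitz, giving $|n^{-\Phi(s_k)}-n^{-\Phi(s'_k)}|\gtrsim \rho$; otherwise the IVT produces $u_k\in[s_k,s'_k]$ with $\text{Im}(\Phi(u_k)-\Phi(s_k))=\pi/\log n$, so $n^{-\Phi(u_k)}$ and $n^{-\Phi(s_k)}$ are antipodal in argument and both have modulus at least $n^{-2M_0}$, whence $|n^{-\Phi(u_k)}-n^{-\Phi(s_k)}|\geq 2n^{-2M_0}$. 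In every case one obtains a fixed positive lower bound for $|n^{-\Phi}(\cdot)-n^{-\Phi}(\cdot)|$ at points whose distance tends to $0$, contradicting $n^{-\Phi}\in\mathcal{A}(\C_+)$. No compactness or boundary limits are needed.
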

\begin{remark}\label{rem: constant}
Notice that in the statement of the theorem we are excluding the case in which the symbol $\Phi$ touches the boundary of $\C_+$, this is, $\Phi(\C_+)\cap \partial\C_+\not=\emptyset$. If this happens, then the symbol $\Phi$ is constant and the composition operator $C_{\Phi}$ is trivially bounded on $\mathcal{A}(\C_+)$. Observe also that this is not the case in $\mathcal{H}^{\infty}$ since not every $\mathcal{H}^{\infty}$ function is defined in the closure of the right half-plane.
\end{remark}
\begin{proof}[Proof of Theorem \ref{thsimal}]
Trivially, $d)$ implies $e)$. Now, since $n^{-s}\in\mathcal{A}(\C_+)$ for all $n\in\N$, we clearly have that $a)$ implies $c)$.

\ 
To show $c)$ implies $d)$, it is enough to prove that $\Phi\in\mathcal{G}_{\infty}$. To do so, we use Theorem \ref{lemma: bay-cas} and the conclusion immediately follows.

\ 
Let us show that $d)$ implies $a)$. By Theorem \ref{boundedness-Hinfty}, $C_{\Phi}$ maps $\mathcal{H}^{\infty}$ into itself because $\Phi\in\mathcal{G}_{\infty}$. Moreover, we have by hypothesis that $C_{\Phi}$ sends the Dirichlet monomials $n^{-s}$ into $\mathcal{A}(\C_+)$. Since the algebra is the closure of the linear span of the monomials in the $\mathcal{H}^{\infty}$ norm (Theorem \ref{thpropbasa} (2)) and the operator $C_{\Phi}$ is linear, we conclude that $C_{\Phi}$ actually maps boundedly the algebra of Dirichlet series $\mathcal{A}(\C_+)$ into itself.

\ 
For $b)$ implies $c)$, consider $n\in\N$, $n\geq2$, since for $n=1$ there is nothing to prove. We want to show that $g(s)=n^{-\Phi(s)}$ is uniformly continuous in $\C_+$. To do so, take $\varepsilon>0$ and $M>0$ so that $n^{-(M-1)}<\varepsilon/4$. 
By the uniform continuity of $n^{-s}$ in $\C_+$, there exists $\delta_1>0$ such that $|n^{-z}-n^{-w}|<\varepsilon/2$  whenever $|z-w|<\delta_1$.
By the definition of $\mathcal{G}_{\mathcal{A}}$, we have that the symbol $\Phi$ is uniformly continuous in $A_M$. Then, there exists a $\delta>0$ so that whenever $s_1,s_2$ belong to $A_M$ and $|s_1-s_2|<\delta$, we have $|\Phi(s_1)-\Phi(s_2)|<\delta_{1}$ and thus $|n^{-\Phi(s_{1})}-n^{-\Phi(s_{2})}|<\varepsilon/2$. We may assume that $\delta<\delta_{1}$. 

\ 
Now, suppose that neither $s_1$ nor $s_2$ belong to $A_{M-1}$. 
Then, $\text{Re}(\Phi(s_j))>M-1$, $j=1,2$. Therefore, by the choice of $M>0$
\[
|n^{-\Phi(s_1)}-n^{-\Phi(s_2)}|\leq n^{-\text{Re}(\Phi(s_1))}+n^{-\text{Re}(\Phi(s_2))}\leq 2n^{-(M-1)}<\varepsilon/2.
\]
Eventually, suppose that $s_1\in A_{M-1}$ and $s_2\not\in A_M$ and $|s_1-s_2|<\delta$. 
%If $s_1\not\in A_{M-1}$, we have that $\text{Re}(\Phi(s_i))>M-1$, $i=1,2$. Once more, the choice of $M>0$ yields
% \[
% |n^{-\Phi(s_1)}-n^{-\Phi(s_2)}|\leq 2n^{-(M-1)}<\varepsilon/2.
% \]
%Hence, we assume that $s_1\in A_{M-1}$ and $s_2\not\in A_M$.
 By the mean value theorem, if $\gamma(t)=ts_1+(1-t)s_2$, $0\leq t\leq1$, there exists $t_0$ so that 
\[
M-1<\text{Re}(\Phi(\gamma(t_0))<M.
\]
Set $s_3=\gamma(t_0)$. Clearly, $s_1,s_3\in A_M$ and $|s_1-s_3|<\delta$. Therefore, $|n^{-\Phi(s_1)}-n^{-\Phi(s_3)}|<\varepsilon/2.$ In addition, $s_2,s_3\notin A_{M-1}$ and  $|n^{-\Phi(s_2)}-n^{-\Phi(s_3)}|<\varepsilon/2$.
Hence, 
\[
|n^{-\Phi(s_1)}-n^{-\Phi(s_2)}|\leq |n^{-\Phi(s_1)}-n^{-\Phi(s_3)}|+
|n^{-\Phi(s_3)}-n^{-\Phi(s_2)}|<\frac{\varepsilon}{2}+\frac{\varepsilon}{2}=\varepsilon.
\]

\  
We conclude the proof showing $e)$ implies $b)$. By hypothesis, there exists an $n\geq2$ such that the Dirichlet series $n^{-\Phi}$ belongs to $\mathcal{A}(\C_+)$. Assume that $\Phi\not\in\mathcal{G}_{\mathcal{A}}$. Since $\Phi\in\mathcal{G}_{\infty}$, there must exist $M_0>1$, $\varepsilon>0$, $\{r_k\}$, $\{t_k\}\subset A_{M_0}$ such that $|r_k-t_k|<\frac 1k$ for all $k\in\N$ and
\[
|\Phi(r_k)-\Phi(t_k)|>\varepsilon, \quad \text{for all  } k\in\N.
\]
Consider the rectangle $\mathcal{R}=\{z\in\C_+: \ \text{Re}(z)\leq2M_0, \ |\text{Im}(z)|\leq \pi/\log(n)\}$. The function $f(s)=n^{-s}$ is bilipschitz there, so, for some constant $K=K(M_0)$
\[
|n^{-s}-n^{-t}|\geq K|s-t|, \quad s,t\in R.
\]
Let $a_k=\frac12(\text{Im}(\Phi(r_k))+\text{Im}(\Phi(t_k)))$. Hence, if for infinitely many $k\in\N$, the points $\Phi(r_k)-ia_k$ and $\Phi(t_k)-ia_k$ belong to the rectangle $\mathcal{R}$, we would have that
\[
|n^{-\Phi(r_k)}-n^{-\Phi(t_k)}|=|n^{-(\Phi(r_k)-ia_k)}-n^{-(\Phi(t_k)-ia_k)}|\geq K\varepsilon.
\]
Therefore, $n^{-\Phi(s)}$ would not be uniformly continuous in $\C_+$. Then, we can assume that $\text{Im}(\Phi(t_k)-\Phi(r_k))>\pi/\log n$ for every $k$. For each $k\in\N$, we consider the segments  $\gamma_k(\alpha)=\alpha r_k+(1-\alpha)t_k$, $0\leq \alpha\leq1$, joining $r_k$ and $t_k$. We now take the image of $\gamma_k$ under $\Phi$ and one of the following two cases must hold: 

\noindent {\sl Case I.} First, assume, for infinitely many $k$, the existence of an $\alpha_k\in [0,1]$ such that $\gamma_k(\alpha_k)\not\in A_{2M_0}$. Of course, we may assume that this happens for all $k$. Set $u_k=\gamma_k(\alpha_k)$ for which, clearly, $|r_k-u_k|<1/k$ for all $k\in\N$. Then,
$|n^{-\Phi(u_k)}|<n^{-2M_0}$ and $|n^{-\Phi(r_k)}|>n^{-M_0}$ (since $\{r_k\}$ is in $A_{M_0}$). Therefore,
\[
|n^{-\Phi(u_k)}-n^{-\Phi(r_k)}|>n^{-M_0}.
\]
This clearly implies that $n^{-\Phi(s)}$ is not uniformly continuous in $\C_+$, a contradiction.

\noindent {\sl Case II.}  Assume now that the whole image of $\gamma_k$ under $\Phi$ lies in $A_{2M_0}$ for infinitely many $k$. Again we may assume that this happens for all $k$.  If this were the case, for each $k$ we could find an $\alpha_k\in [0,1]$ so that 
\[
\text{Im}(\Phi(\gamma_k(\alpha_k))-\Phi(r_k))=\pi/\log n.
\]
Set again $u_k=\gamma(\alpha_k)$. Hence,
\[
\text{Arg}(n^{-\Phi(u_k)})-\text{Arg}(n^{-\Phi(r_k)})=\pi,\quad \text{for all $k$}.
\]
Since the image of the vertical strip $0<\text{Re}(s)<2M_0$ under the function $f(s)=n^{-s}$ is the annulus
$
\{z\in\D: |z|>n^{-2M_0}\},
$
we deduce that the modulus of the difference of two points whose argument differs exactly on $\pi$ units will be strictly greater than $2n^{-2M_0}$. Hence, by the choice we made of both $u_k$ and $r_k$, we have that
\[
|n^{-\Phi(u_k)}-n^{-\Phi(r_k)}|>n^{-2M_0},\quad \text{for all $k$}.
\]
Again we have that this implies that $n^{-\Phi(s)}$ is not uniformly continuous in $\C_+$, a contradiction.
%Eventually, for the case where the image of $\gamma_n$ under $\Phi$ is fully contained in the vertical strip $\Omega=\{s\in\C_+:0<\text{Re}(s)<2M_0\}$, we argue analogously as in the previous case but with the bigger vertical strip $0<\text{Re}(s)<2M_0$.
\end{proof}

%\subsection{Examples}

\begin{example}\label{example1}\emph{The classes $\mathcal G_{\mathcal A}$ and $\mathcal G_\infty$ do not coincide.} In order to prove this statement, we need to build a symbol $\Phi$ in $\mathcal{G}_{\infty}$ such that, for some $M_0>0$, $\Phi$ is not uniformly continuous in $A_{M_0}$. As it was seen in Definition \ref{def: gorhed}, the symbols in the class $\mathcal{G}_{\infty}$ consist on the analytic functions from $\C_+$ to $\C_+$ which may be written as $c_{\Phi}s+\varphi(s)$, where $c_{\Phi}\in\N\cup\{0\}$ and $\varphi\in\mathcal{D}$. We are going to construct a symbol $\Phi$ with $c_{\Phi}=0$. An easy way to obtain a Dirichlet series is through the composition of a holomorphic function on $\D$ with a function $g(s)=n^{-s}$, $n\in\N$, $n\geq2$, defined on $\C_+$. We take $n=2$. Therefore, we consider $f\in H^{\infty}(\D)\setminus A(\D)$, where $A(\D)$ stands for the disc algebra, this is, the set of holomorphic functions in $\D$ which can be continuously extended to $\overline{\D}$. Note that $f$ cannot be uniformly continuous in $\D$ since, if it were the case, then we could extend it continuously to $\overline{\D}$ and it would belong to $A(\D)$. Now, the function $\Phi(s)=f(2^{-s})+\|f\|_{\infty}$ is analytic on $\C_+$ and it defines a Dirichlet series there. Being clear that $f$ maps $\C_+$ into $\C_+$ it remains to check that the resulting function is not uniformly continuous in some $A_{M_0}$, $M_0>0$. %This second fact would come from the choice of $f$. 
Clearly, $\Phi$ is not uniformly continuous on $\C_+$. Notice that $\Phi(\C_+)$ is bounded. So, for $M>0$ big enough, $A_M=\C_+$. Hence, for those $M$'s, $\Phi$ cannot be uniformly continuous in $A_M$, as desired. In brief, the symbol $\Phi$ induces a bounded composition operator in $\mathcal{H}^{\infty}$ but not in $\mathcal{A}(\C_+)$.
\end{example}

\begin{example}\label{ex: gea no unifcon}
\emph{ There are non-uniformly continuous symbols in $\C_+$ which belong to $\mathcal{G}_{\mathcal{A}}$.} We now give an example of how the uniform continuity of the symbol in $\C_+$ is not a necessary condition in Theorem \ref{thsimal}. To do so, we are going to build  $\Phi\in\mathcal{G}_{\mathcal{A}}$ which is not uniformly continuous in $\C_+$. Let us consider $\Phi(s)=T(2^{-s})$, where $T$ is a conformal map sending the unit disc onto the intersection of $\C_+$ with a horizontal strip. In addition, assume that $T(1)=\infty$. Clearly, $\Phi:\C_+\to\C_+$ is analytic and it also defines a Dirichlet series there. Therefore, $\Phi\in \mathcal{G}_{\infty}$. Now, for every $M>0$, $T^{-1}(A_M)$ is contained in the compact set $\overline{\D}\setminus E$, where $E$ is an open neighbourhood of $1$ depending on $M$. Hence, $T$ is uniformly continuous there. Since $f(s)=2^{-s}$ is uniformly continuous in the whole $\C_+$, we conclude that $\Phi$ is in the class $\mathcal{G}_{\mathcal{A}}$. Nonetheless, notice that $\Phi$ cannot be uniformly continuous in $\C_+$ since it maps bounded neighbourhoods of the points $s_n=2\pi in/\log 2$, $n\in\Z$, to unbounded regions.
%The symbol $\Phi$ is uniformly continuous in any half plane $\C_{\varepsilon}$. Moreover, for every $M>0$, there exists an $\varepsilon_0>0$ such that $A_M\subset\C_{\varepsilon_0}$. This gives 
\end{example}

\section{The compactness matter}\label{sec:compactness}
This section is devoted to compactness and weak compactness of composition operators in the algebra of Dirichlet series $\mathcal{A}(\C_+)$. Both in $H^{\infty}(\D)$ and in the disc algebra $A(\D)$, the class of compact composition operators and the class of weakly compact composition operators coincide and can be characterised in terms of the range of the symbol. Namely, if $\phi$ is such a symbol, the distance of the set $\phi(\D)$ to $\partial \D$ must be positive. In addition, in both spaces the non-compactness is due to the fact that the operator acts  as an isomorphism in a non-reflexive subspace, isomorphic to $c_{0}$ in the disc algebra and  to $\ell^{\infty}$ in the case of $H^{\infty}(\D)$ (see the survey \cite{Contreras-Diaz} to see all these results). In this section we will show that  the same results hold in our setting.
In the case of $\mathcal{H}^{\infty}$, something is known. Namely, Bayart \cite{bayarto} proved that a bounded composition operator $C_{\Phi}$ is compact on $\mathcal{H}^{\infty}$ if and only if $\inf \{\Re \, \Phi(s):\, s\in \C_{+}\}>0$ and Lefèvre \cite{pascal} showed that compactness coincides with weak compactness for composition operators on $\mathcal{H}^{\infty}$. This topic was treated in a different context in \cite[Proposition 3]{aron-ga-lind}.

Given an interpolating sequence $\{z_{n}\}$ in the unit disc, 
 by Beurling's Theorem \cite[Theorem 2.1, page 285]{Garnett}, there are a constant $M>0$ and a sequence of functions $\{g_{j}\}$ in  $H^{\infty}(\D)$ such that
\begin{equation}\label{Eq:fixedcopy1}
g_{j}(z_{j})=1, \quad \quad g_{j}(z_{k})=0,\quad \text{ if  } j\neq k,
\end{equation}
and
\begin{equation}\label{Eq:fixedcopy2}
\sum_{j=1}^{\infty} |g_{j}(z)|\leq M, \quad \text{ for all } z\in \D.
\end{equation}
In general, it is not possible to get the above functions $g_{j}$ belonging to the disc algebra. Next lemma shows that if the sequence $\{z_{n}\}$ converges to a point of the boundary, passing to a subsequence $\{z_{n_j}\}$, then the functions can be chosen continuous up to the boundary of the unit disc:

\begin{lemma}\label{Lem:disc-algebra}
Let $\{z_{n}\}$ be a sequence in the unit disc converging to a point $\tau\in \partial \D$. Then, there exist a subsequence $\{z_{n_{j}}\}$, a sequence of functions $\{g_{j}\}$ in the disc algebra $A(\D)$, and a constant $M$ such that
\begin{equation}\label{Eq:disc-algebra1}
g_{j}(z_{n_{j}})=1, \quad  \quad g_{j}(z_{n_{k}})=0,\quad \text{ if  } j\neq k,
\end{equation}
and
\begin{equation}\label{Eq:disc-algebra2}
\sum_{j=1}^{\infty} |g_{j}(z)|\leq M, \quad \text{ for all } z\in \D.
\end{equation}
\end{lemma}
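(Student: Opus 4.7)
First, I extract a subsequence $\{z_{n_j}\}$ of $\{z_n\}$ with $|1-\overline{\tau}z_{n_j}|$ decreasing geometrically, say $|1-\overline{\tau}z_{n_{j+1}}| \leq \tfrac{1}{2}|1-\overline{\tau}z_{n_j}|$, which is possible because $z_n \to \tau$. Using the identity $|z_{n_j}-\tau| = |1-\overline{\tau}z_{n_j}|$ (valid since $|\tau|=1$), a direct computation of the pseudo-hyperbolic distances $\rho(z_{n_j},z_{n_k})$ shows that such a geometrically separated subsequence is automatically an $H^\infty(\mathbb{D})$-interpolating sequence with some constant $\delta>0$.

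Next, I invoke Beurling's theorem in its constructive form to produce
\[
\tilde{g}_j(z) = \left(\frac{1-|z_{n_j}|^2}{1-\overline{z_{n_j}}z}\right)^{2} \frac{B_j(z)}{B_j(z_{n_j})} \in H^\infty(\mathbb{D}),
\]
where $B_j$ is the Blaschke product with zeros $\{z_{n_k}\}_{k\neq j}$. These satisfy \eqref{Eq:fixedcopy1} and \eqref{Eq:fixedcopy2}, as well as the pointwise estimate $|\tilde{g}_j(z)| \leq (1-|z_{n_j}|^2)^2/(\delta\,|1-\overline{z_{n_j}}z|^2)$. Since the zeros of $B_j$ accumulate only at $\tau$, each $\tilde{g}_j$ extends continuously to $\overline{\mathbb{D}}\setminus\{\tau\}$; however, it need not be continuous at $\tau$ itself, so in general $\tilde{g}_j \notin A(\mathbb{D})$.

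To restore continuity at $\tau$ while preserving the interpolation values, I multiply by a rational factor and set
\[
g_j(z) = \frac{1-\overline{\tau}z}{1-\overline{\tau}z_{n_j}}\,\tilde{g}_j(z).
\]
Then $g_j(z_{n_j})=1$ and $g_j(z_{n_k})=0$ for $k\neq j$. The multiplier lies in $A(\mathbb{D})$ and vanishes at $\tau$, and $\tilde{g}_j$ is bounded and continuous on $\overline{\mathbb{D}}\setminus\{\tau\}$, so $g_j(z)\to 0$ as $z\to\tau$, giving $g_j \in A(\mathbb{D})$.

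The main obstacle is verifying the $\ell^1$-bound \eqref{Eq:disc-algebra2}. Combining the estimate on $\tilde{g}_j$ with $(1-|z_{n_j}|^2) \leq 2|1-\overline{\tau}z_{n_j}|$ yields
\[
|g_j(z)| \leq \frac{4\,|1-\overline{\tau}z|\,|1-\overline{\tau}z_{n_j}|}{\delta\,|1-\overline{z_{n_j}}z|^{2}}.
\]
Writing $\epsilon = |1-\overline{\tau}z|$ and $\rho_j = |1-\overline{\tau}z_{n_j}|$, I split the indices into the regimes $\rho_j \geq 2\epsilon$, $\rho_j \leq \epsilon/2$, and $\rho_j \in (\epsilon/2,2\epsilon)$. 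In the first two regimes the triangle inequality for $1-\overline{z_{n_j}}z = (1-\overline{\tau}z) + (\overline{\tau}-\overline{z_{n_j}})z$ gives $|1-\overline{z_{n_j}}z| \geq \tfrac{1}{2}\max(\rho_j,\epsilon)$, and the geometric decay of $\{\rho_j\}$ turns the corresponding partial sums into geometric series of order $1/\epsilon$. The intermediate regime contributes only $O(1)$ indices, again by the geometric decay. Since each summand carries the extra factor $\epsilon$ in the numerator, this cancels the $1/\epsilon$ growth of the inner sum, producing a uniform bound $\sum_j |g_j(z)| \leq M$ independent of $z$, as required.
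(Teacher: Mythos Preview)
Your approach is essentially correct and genuinely different from the paper's. The paper's key trick is to pass to the \emph{larger} disc $\Omega=D(-1,2)$ (assuming $\tau=1$): since $\overline{\D}\setminus\{1\}$ lies in the interior of $\Omega$, the Beurling functions $h_j\in H^\infty(\Omega)$ are automatically continuous on $\overline{\D}\setminus\{1\}$; the authors then multiply by $1-T_j(z)$, where $T_j$ is a disc automorphism with $T_j(z_{n_j})=0$ and $T_j(1)=1$. The crucial gain is that $|1-T_j(z)|\le 2$ uniformly in $j$ and $z$, so the $\ell^1$-bound transfers for free as $M=2M_1$. By contrast, you stay inside $\D$, use explicit Blaschke--kernel functions, and multiply by the \emph{unbounded} family of factors $(1-\overline\tau z)/(1-\overline\tau z_{n_j})$; this forces you to redo the $\ell^1$-estimate from scratch via the dyadic splitting in $\rho_j$ versus $\epsilon$. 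Your route is more hands-on and more explicit; the paper's is slicker because it converts the analytic difficulty into a geometric one-liner.

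A few points to tighten. First, your assertion that the explicit $\tilde g_j$ already satisfy \eqref{Eq:fixedcopy2} is dubious in general---the standard proof of the $\ell^1$ bound (Jones) needs an additional exponential weight---but you never actually use this claim, so it is harmless. Second, the statement that geometric separation in $|1-\overline\tau z_{n_j}|$ ``automatically'' yields an interpolating sequence is true but not a one-line pseudo-hyperbolic computation; it is essentially the theorem from \cite[page 278]{Garnett} that the paper itself invokes, and you should cite it rather than call it direct. Third, in the regime $\rho_j\ge 2\epsilon$ your triangle-inequality lower bound $|1-\overline{z_{n_j}}z|\ge\tfrac12\rho_j$ tacitly uses $|z|$ close to $1$ (since the relevant term is $\rho_j|z|$, not $\rho_j$); for $|z|$ bounded away from $1$ you instead use $|1-\overline{z_{n_j}}z|\ge 1-|z|$, which suffices once $\rho_j$ is small. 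These are all easily fixable and do not affect the validity of the argument.
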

\begin{proof} To simplify the exposition, we assume that $\tau=1$. Take $\Omega=D(-1,2)$,  the disc centred at $-1$ and radius $2$. Notice that the points $z_{n}$ belong to $\Omega$ and the sequence $\{z_{n}\}$ converges to a point of the boundary of $\Omega$.  
By  \cite[Theorem 1.1, page 278]{Garnett}, there exists a subsequence $\{z_{n_{j}}\}$ which is interpolating for $H^{\infty}(\Omega)$.
Thus, by Beurling's Theorem \cite[Theorem 2.1, page 285]{Garnett}, we can find a constant $M_{1}>0$ and a sequence of functions $\{h_{j}\}$ in  $H^{\infty}(\Omega)$ such that
\begin{equation}\label{Eq:fixedcopy3}
h_{j}(z_{n_{j}})=1, \quad \quad h_{j}(z_{n_{k}})=0,\quad \text{ if  } j\neq k,
\end{equation}
and
\begin{equation}\label{Eq:fixedcopy4}
\sum_{j=1}^{\infty} |h_{j}(z)|\leq M_1, \quad \text{ for all } z\in \Omega.
\end{equation}
Notice that, for each $j$, $h_{j}$ is bounded, analytic in the unit disc and continuous in $\overline\D\setminus \{1\}$. Now, for each $j$, take $T_{j}$ an automorphism of the unit disc such that $T_{j}(z_{n_{j}})=0$ and $T_{j}(1)=1$. Finally, consider $g_{j} (z)=(1-T_{j}(z))h_{j}(z)$, $z\in \D$. The boundedness of $h_{j}$ implies that $\lim_{z\to 1} g_{j}(z)=0$ so that $g_{j}\in A(\D)$ for all $j$. Using \eqref{Eq:fixedcopy3} and \eqref{Eq:fixedcopy4}, a straightforward computation shows that these functions satisfies \eqref{Eq:disc-algebra1} and \eqref{Eq:disc-algebra2} with $M=2M_{1}$. 
\end{proof}

Let us recall that given three Banach spaces $X$, $Y$, and $E$, an operator $T : X \to Y$ {\sl is said to fix a copy of $E$} if there is a subspace $Z\subseteq X$ such that $Z$ is isomorphic to $E$ and $T : Z \to T(Z)$ is an isomorphism. In our case, $E$ will be either the sequence space $\ell^{\infty}$ or $c_0$.

\begin{theorem}\label{Thm:Hinfty}
Let $\Phi\in \mathcal G_{\infty}$ such that $\inf \{\Re \, \Phi(s):\, s\in \C_{+}\}=0$. Then
\begin{itemize}
\item[1)] $C_{\Phi}:\mathcal H^{\infty}\to \mathcal H^{\infty} $ fixes a copy of $\ell ^{\infty}$; 
\item[2)] $C_{\Phi}: \mathcal A (\C_{+})\to \mathcal H^{\infty} $ fixes a copy of $c_{0}$.
\end{itemize}
If, in addition,  $\Phi\in \mathcal G_A$, then $C_{\Phi}: \mathcal A (\C_{+})\to \mathcal A (\C_{+}) $ fixes a copy of $c_{0}$.
\end{theorem}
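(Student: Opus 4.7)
My plan is to transfer Beurling-type interpolation from the unit disc to $\C_+$ via the map $s\mapsto 2^{-s}$, and then use the resulting ``interpolating'' Dirichlet series to build the required copies of $\ell^\infty$ and $c_{0}$. First, I will exploit $\inf_{\C_+}\Re\Phi=0$ to pick a sequence $\{s_n\}\subset\C_+$ with $\Re\Phi(s_n)\to 0$ and set $z_n:=2^{-\Phi(s_n)}$, so that $|z_n|\to 1$. Passing to a subsequence, I may assume $z_n\to\tau$ for some $\tau\in\partial\D$. Applying Lemma \ref{Lem:disc-algebra} to this sequence yields a further subsequence (still written $\{z_n\}$), disc algebra functions $g_j\in A(\D)$, and a constant $M>0$ with $g_j(z_j)=1$, $g_j(z_k)=0$ for $j\neq k$, and $\sum_j|g_j(z)|\leq M$ for every $z\in\D$. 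I then define
\[
F_j(s):=g_j(2^{-s}),\qquad s\in\C_+.
\]
Expanding $g_j$ in its Taylor series shows $F_j$ is a Dirichlet series supported on powers of $2$ and convergent in $\C_+$; the Lipschitz continuity of $2^{-s}\colon\C_+\to\overline{\D}$ together with the uniform continuity of $g_j$ on $\overline{\D}$ yields the uniform continuity of $F_j$ on $\C_+$, so by Proposition \ref{thprop} each $F_j\in\mathcal{A}(\C_+)$.

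Next, for part (1), given $\lambda\in\ell^\infty$ I form $H_\lambda(z):=\sum_j\lambda_j g_j(z)$, which lies in $H^\infty(\D)$ with $\|H_\lambda\|_\infty\leq M\|\lambda\|_\infty$ thanks to the pointwise bound $\sum_j|g_j(z)|\leq M$. The function $\Lambda(\lambda)(s):=H_\lambda(2^{-s})$ is then a bounded Dirichlet series on $\{2^k:k\geq 0\}$, hence an element of $\mathcal{H}^\infty$, and $\Lambda:\ell^\infty\to\mathcal{H}^\infty$ satisfies $\|\Lambda(\lambda)\|_\infty\leq M\|\lambda\|_\infty$. The crucial lower estimate comes from composition: evaluating at $s_k$ gives
\[
C_\Phi\Lambda(\lambda)(s_k)=H_\lambda(2^{-\Phi(s_k)})=H_\lambda(z_k)=\lambda_k,
\]
so $\|C_\Phi\Lambda(\lambda)\|_\infty\geq\|\lambda\|_\infty$. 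Combined with the boundedness of $C_\Phi$ this yields $\|\Lambda(\lambda)\|_\infty\geq\|\lambda\|_\infty/\|C_\Phi\|$, so $\Lambda$ is an isomorphism of $\ell^\infty$ onto $\Lambda(\ell^\infty)\subset\mathcal{H}^\infty$ and $C_\Phi$ acts as an isomorphism on this subspace, proving (1). For parts (2) and (3) I restrict $\Lambda$ to $c_0$: the partial sums $\sum_{j\leq N}\lambda_j F_j$ are now Cauchy in $\|\cdot\|_\infty$ (their tails are bounded by $M\sup_{j>N}|\lambda_j|$), hence $\Lambda(\lambda)\in\mathcal{A}(\C_+)$, and the same two-sided estimate gives (2). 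Part (3) then follows because $\Phi\in\mathcal{G}_{\mathcal{A}}$ forces $C_\Phi$ to map $\mathcal{A}(\C_+)$ into itself by Theorem \ref{thsimal}.

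The main subtlety I expect to encounter is the $\ell^\infty$ case, where the series $\sum_j\lambda_j F_j$ does not generally converge in $\mathcal{H}^\infty$-norm when $\lambda\notin c_0$, so $\Lambda$ cannot be defined by norm-convergence of partial sums. The workaround is to first assemble the bounded analytic function $H_\lambda\in H^\infty(\D)$ in the disc — where the pointwise absolute convergence $\sum_j|g_j(z)|\leq M$ does all the work — and only afterwards pull back via $2^{-s}$ to obtain a Dirichlet series on powers of $2$ lying in $\mathcal{H}^\infty$. A secondary technical point is verifying that the $F_j$ actually belong to $\mathcal{A}(\C_+)$, but this follows cleanly from Proposition \ref{thprop} once uniform continuity is checked.
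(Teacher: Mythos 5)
Your argument is correct and follows essentially the same route as the paper: the same reduction to the disc via $s\mapsto 2^{-s}$, the same key Lemma \ref{Lem:disc-algebra}, the same test functions $g_j(2^{-s})\in\mathcal{A}(\C_+)$, and the same evaluation estimates at the points $\Phi(s_k)$ to get the lower bounds for $C_\Phi$. The only divergence is in the $\ell^\infty$ case, where the paper shows that the pointwise sum $\sum_j\alpha_j f_j$ belongs to $\mathcal H^{\infty}$ by applying the Montel-type theorem for Dirichlet series (Theorem \ref{th: bayart}) to the partial sums, while you assemble $H_\lambda\in H^{\infty}(\D)$ first and pull it back through $2^{-s}$ --- an equally valid, slightly more elementary treatment of that single step.
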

\begin{proof}
We begin proving $2)$. By hypothesis, there is a sequence $\{w_{j}\}$ in $\C_{+}$ such that $\Re \, w_{j}$ goes to zero as $j$ goes to $+\infty$, where $w_{j}=\Phi(s_{j})$ for some $s_{j}\in \C_{+}$. Passing to a subsequence if necessary, we may assume that $\{2^{-w_{j}}\}$ converges to $\tau\in\partial\D$. By Lemma \ref{Lem:disc-algebra}, there exists a subsequence $\{w_{n_{j}}\}$, a sequence of functions $\{g_{j}\}$ in the disc algebra $A(\D)$, and a constant $M$ such that
\begin{equation}\label{Eq:disc-algebra7}
g_{j}(2^{-w_{n_{j}}})=1, \quad  \quad g_{j}(2^{-w_{n_{k}}})=0,\quad \text{ if  } j\neq k,
\end{equation}
and
\begin{equation}\label{Eq:disc-algebra8}
\sum_{j=1}^{\infty} |g_{j}(z)|\leq M, \quad \text{ for all } z\in \D.
\end{equation}
Write $f_{j}(s)=g_{j}(2^{-s})$ for all $s\in \C_{+}$ and for all $j$. The functions $f_{j}$ belong to $\mathcal A(\C_{+})$ for all $j$.
Take $Z$ the closed subspace in $\mathcal A(\C_{+})$ generated by $\{f_{j}: j\geq 1\}$ and take $T:c_{0}\to Z$ given by $T(\{\alpha_{j}\})=\sum _{j=1}^{\infty }\alpha_{j}f_{j}$, for all $\{\alpha_{j}\}\in c_{0}$. By \eqref{Eq:disc-algebra8}, $T$ is well-defined. Moreover, $T$ is an isomorphism. Indeed, by \eqref{Eq:disc-algebra8}, for all $j$,
$$
|| T (\{ \alpha_{j}\}) ||\leq \sup_{w\in \C_{+}} |\sum_{j}\alpha_{j}f_{j}(w)|\leq ||\{ \alpha_{j}\}||_{\infty} M
$$ 
and, by \eqref{Eq:disc-algebra7},
\begin{equation}\label{Eq:disc-algebra9}
|\alpha_{j}|=|\sum_{k}\alpha_{k}g_{k}(2^{-w_{n_{j}}})|= |\sum_{k}\alpha_{k}f_{k}(w_{n_{j}})|= |T(\{\alpha_{k}\})(w_{n_{j}})|
\leq \|T(\{\alpha_{k}\})\|.
\end{equation}
That is,
$$
\|\{\alpha_{k}\}\|\leq \|T(\{\alpha_{k}\})\| \leq M\|\{\alpha_{k}\}\|
$$
for all $\{\alpha_{k} \} \in c_{0}$.
Moreover, given $f=T(\{\alpha_{k}\})\in Z$,  by \eqref{Eq:disc-algebra9},
$$
\| C_{\Phi}(f)\|\geq \sup_{j}|f(\Phi(s_{n_{j}}))|=\sup_{j}|f(w_{n_{j}})|\geq \sup_{j}|\alpha_{j}|=\|\{ \alpha_{j}\}\|\geq \|f\|/M.
$$
That is, ${C_{\Phi}}_{|Z}$ is an isomorphism. This concludes the proof of $2)$.

To prove $1)$, consider the operator $S:\ell^{\infty }\to \mathcal H^{\infty}$ given by $S(\{\alpha_{j}\})=\sum _{j=1}^{\infty }\alpha_{j}f_{j}$, for all $\{\alpha_{j}\}\in \ell^{\infty}$. The great difference between  $S$ and $T$ is that now the sequence $\{\alpha_{j}\}$ does not go to zero and, thus, it is not clear that $S(\{\alpha_{j}\})$ is well-defined. By \eqref{Eq:disc-algebra8}, what we know is that, for each $s$, the series $S(\{\alpha_{j}\})(s)=\sum _{j=1}^{\infty }\alpha_{j}f_{j}(s)$ converges, so that   $S(\{\alpha_{j}\})(s)$ is a well-defined function bounded by $M\|\{\alpha_{j}\}\|$. We claim that $S(\{\alpha_{j}\})$ belongs to 
$\mathcal H^{\infty}$.
To prove the claim, notice that the sequence of functions  $\{h_{N}\}=\{\sum _{j=1}^{N}\alpha_{j}f_{j}\}$ is bounded (by $M\|\{\alpha_{j}\}\|$) in $\mathcal H^{\infty}$.  By Bayart-Montel's Theorem (Theorem \ref{th: bayart}), it has a subsequence that converges uniformly in $\C_{\varepsilon}$, $\varepsilon>0$, to a certain $h\in\mathcal H^{\infty}$. But $\{h_{N}\}$ converges pointwise to $S(\{\alpha_{j}\})$. Thus, $h=S(\{\alpha_{j}\})$ and the claim is proven.
The remaining properties of $S$ can be obtained in a similar way to those of $T$ and we are done.
\end{proof}
Once the above theorem has been proven, we are ready to state and prove the characterisation of compact composition operators in the Dirichlet series algebra $\mathcal{A}(\C_+)$. Clearly, if $\Phi$ is as in Remark \ref{rem: constant}, the operator $C_{\Phi}$ is compact. Because of this, in the following theorem we only consider symbols satisfying $\Phi(\C_+)\subset\C_+$.
\begin{theorem}\label{thcomp}
Let $C_{\Phi}:\mathcal{A}(\C_+)\to \mathcal{A}(\C_+) $ be a bounded composition operator with a symbol $\Phi:\C_{+}\to \C_{+}$. Then, the following statements are equivalent:
\begin{enumerate}[a)]
    \item $\Phi(\C_+)\subset\C_{\varepsilon}$, for some $\varepsilon>0$.
    \item $C_{\Phi}$ is compact.
    \item $C_{\Phi}$ is weakly compact.
    \item $C_{\Phi}:\mathcal{A}(\C_+)\to \mathcal{A}(\C_+) $  does not fix a copy of $c_{0}$.
\end{enumerate}
\end{theorem}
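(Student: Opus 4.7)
The plan is to close the cycle $a) \Rightarrow b) \Rightarrow c) \Rightarrow d) \Rightarrow a)$, since the converse direction of the hard implication has essentially been established already.

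For $a) \Rightarrow b)$, I would take a bounded sequence $\{f_n\}$ in $\mathcal{A}(\C_+)$, view it as a bounded sequence in $\mathcal{H}^{\infty}$, and apply Theorem \ref{th: bayart} to extract a subsequence $\{f_{n_k}\}$ converging uniformly on each half-plane $\C_{\delta}$, $\delta>0$, to some $f\in\mathcal{H}^{\infty}$. Since $\Phi(\C_+)\subset\C_{\varepsilon}$, the sequence $\{f_{n_k}\circ\Phi\}$ then converges uniformly on all of $\C_+$ to $f\circ\Phi$. Because $C_{\Phi}$ is bounded on $\mathcal{A}(\C_+)$, each $f_{n_k}\circ\Phi$ belongs to $\mathcal{A}(\C_+)$, and Theorem \ref{thpropbasa}(1) says $\mathcal{A}(\C_+)$ is closed in $\mathcal{H}^{\infty}$, so the uniform limit $f\circ\Phi$ lies in $\mathcal{A}(\C_+)$. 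Hence $C_{\Phi}$ maps bounded sequences to sequences with norm-convergent subsequences.

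The implication $b) \Rightarrow c)$ is immediate. For $c) \Rightarrow d)$, I would invoke the standard fact that a weakly compact operator cannot act as an isomorphism on any non-reflexive closed subspace. Explicitly, suppose $C_{\Phi}$ is weakly compact and fixes a copy of $c_0$, so there is a closed subspace $Z\subset\mathcal{A}(\C_+)$ with $Z\cong c_0$ such that $C_{\Phi}|_Z$ is an isomorphism onto its image. Since $C_{\Phi}(B_{\mathcal{A}(\C_+)})$ is weakly relatively compact and $B_Z$ is bounded, $C_{\Phi}(B_Z)$ is also weakly relatively compact. Composing with the bounded (hence weakly continuous) inverse $(C_{\Phi}|_Z)^{-1}$, we get that $B_Z$ itself is weakly relatively compact, so $Z$ is reflexive, contradicting $Z\cong c_0$.

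Finally, $d) \Rightarrow a)$ is the contrapositive of the last assertion of Theorem \ref{Thm:Hinfty}. Since $C_{\Phi}$ is bounded on $\mathcal{A}(\C_+)$, Theorem \ref{thsimal} gives $\Phi\in\mathcal{G}_{\mathcal{A}}$. If $a)$ fails, then $\inf\{\mathrm{Re}\,\Phi(s):s\in\C_+\}=0$, so the final clause of Theorem \ref{Thm:Hinfty} yields that $C_{\Phi}:\mathcal{A}(\C_+)\to\mathcal{A}(\C_+)$ fixes a copy of $c_0$, contradicting $d)$. The heavy lifting for the whole equivalence thus reduces to Theorem \ref{Thm:Hinfty}, and I do not foresee any substantive obstacle beyond the routine functional-analytic arguments outlined above.
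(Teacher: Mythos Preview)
Your proof is correct and follows essentially the same route as the paper: the same cycle $a)\Rightarrow b)\Rightarrow c)\Rightarrow d)\Rightarrow a)$, with $a)\Rightarrow b)$ via Bayart--Montel (Theorem~\ref{th: bayart}) and closedness of $\mathcal{A}(\C_+)$, and $d)\Rightarrow a)$ via the contrapositive of the last clause of Theorem~\ref{Thm:Hinfty}. The only difference is that you spell out the standard argument for $c)\Rightarrow d)$, which the paper simply declares to be clear.
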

\begin{proof} By Theorem \ref{Thm:Hinfty}, we have that $d)$ implies $a)$. Being clear that $b)$ implies $c)$  and that $c)$ implies $d)$, it remains to prove that $a)$ implies $b)$. This follows directly from the argument in \cite[Theorem 18]{bayarto}. We include the proof for the sake of completeness. Therefore, assume the existence of $\varepsilon_0>0$ so that $\Phi(\C_+)\subset\C_{\varepsilon_0}$. Consider $\{f_n\}_{n\in\N}$ a bounded sequence in $\mathcal{A}(\C_+)$. Since the algebra of Dirichlet series is a closed subspace of $\mathcal{H}^{\infty}$, we have that the sequence $\{f_n\}_{n\in\N}$ is bounded there. Hence, by Bayart-Montel's Theorem (Theorem \ref{th: bayart}), we know the existence of both a convergent subsequence $\{f_{n_k}\}_{k\in\N}$ and a limit function $f\in\mathcal{H}^{\infty}$ so that $f_{n_k}\to f$ uniformly in half-planes $\C_{\varepsilon}$ for every $\varepsilon>0$. We take $\varepsilon=\varepsilon_0$. Therefore, by the hypothesis, $f_{n_k}\circ\Phi\to f\circ\Phi$ uniformly in $\C_+$. Since $\Phi\in\mathcal{G}_{A}$, by Theorem \ref{thsimal}, $f_{n_k}\circ\Phi\in\mathcal{A}(\C_+)$ for all $k$. Since $\mathcal{A}(\C_+)$ is closed, we conclude that $f\circ\Phi\in\mathcal{A}(\C_+)$. This yields the compactness of the operator $C_{\Phi}$. 
\end{proof}
The same proof given above for the algebra  of Dirichlet series works for $\mathcal{H}^{\infty}$ showing that $C_{\Phi}$ is compact on $\mathcal{H}^{\infty}$ if and only if it does not fix a copy of $\ell^{\infty}$. Let us recall that the equivalence between $a)$ and $b)$ was previously proved by Bayart in \cite{bayarto} and the equivalence between $b)$ and $c)$ was proved by Lefèvre in \cite{pascal}.

\begin{theorem}\label{thcompa}
Let $C_{\Phi}\colon\mathcal H^{\infty}\to \mathcal H^{\infty} $ be a bounded composition operator. Then, the following statements are equivalent:
\begin{enumerate}[a)]
    \item $\Phi(\C_+)\subset\C_{\varepsilon}$, for some $\varepsilon>0$.
    \item $C_{\Phi}$ is compact.
    \item $C_{\Phi}$ is weakly compact.
    \item $C_{\Phi}:\mathcal H^{\infty}\to \mathcal H^{\infty} $  does not fix a copy of $\ell^{\infty}$.
\end{enumerate}
\end{theorem}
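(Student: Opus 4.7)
The plan is to establish the cycle of implications $a) \Rightarrow b) \Rightarrow c) \Rightarrow d) \Rightarrow a)$, essentially by transferring the proof of Theorem \ref{thcomp} to the $\mathcal{H}^{\infty}$ setting, with $\ell^{\infty}$ replacing $c_{0}$. The implication $d) \Rightarrow a)$ is the direct content of part 1) of Theorem \ref{Thm:Hinfty}: if $a)$ fails, then by Bayart's characterisation (Theorem \ref{boundedness-Hinfty}) $\Phi\in \mathcal G_{\infty}$ and $\inf\{\Re\,\Phi(s):s\in\C_+\}=0$, which by Theorem \ref{Thm:Hinfty} gives that $C_{\Phi}$ fixes a copy of $\ell^{\infty}$, contradicting $d)$.

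For $a) \Rightarrow b)$, I will mimic verbatim the argument in Theorem \ref{thcomp}. Given $\varepsilon_0>0$ with $\Phi(\C_+)\subset \C_{\varepsilon_0}$ and a bounded sequence $\{f_n\}\subset \mathcal H^{\infty}$, Bayart--Montel's Theorem (Theorem \ref{th: bayart}) yields a subsequence $\{f_{n_k}\}$ converging uniformly on $\C_{\varepsilon_0}$ to some $f\in\mathcal H^{\infty}$. Since $\Phi(\C_+)\subset\C_{\varepsilon_0}$, the sequence $f_{n_k}\circ\Phi$ converges uniformly on $\C_+$ to $f\circ\Phi$; as $\mathcal H^{\infty}$ is closed under uniform limits on $\C_+$, we obtain $f\circ\Phi\in \mathcal H^{\infty}$, so $C_{\Phi}$ is compact.

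The implication $b)\Rightarrow c)$ is trivial. For $c)\Rightarrow d)$ I will invoke the standard Banach-space fact that a weakly compact operator cannot fix a copy of a non-reflexive space: if $Z\subset \mathcal H^{\infty}$ is isomorphic to $\ell^{\infty}$ and $C_{\Phi}|_Z$ is an isomorphism onto its image, then weak compactness of $C_{\Phi}$ would make the unit ball of $C_{\Phi}(Z)$ relatively weakly compact, and transferring back via the isomorphism would force $Z\cong\ell^{\infty}$ to be reflexive, a contradiction.

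The proof contains no serious new obstacle; the only point that required work, namely the construction of the $\ell^{\infty}$-copy in the non-compact case, has already been handled in Theorem \ref{Thm:Hinfty} via the interpolation-theoretic Lemma \ref{Lem:disc-algebra} (with the difference that in the $\mathcal H^{\infty}$ setting the functions $f_j=g_j(2^{-s})$ need not lie in $\mathcal A(\C_+)$, but a partial-sums Bayart--Montel argument shows the resulting map $\ell^{\infty}\to \mathcal H^{\infty}$ is well-defined, as was done in the proof of Theorem \ref{Thm:Hinfty}). I would add, as in the commentary after Theorem \ref{thcomp}, a brief remark crediting Bayart \cite{bayarto} for $a)\Leftrightarrow b)$ and Lefèvre \cite{pascal} for $b)\Leftrightarrow c)$, so the novelty of the theorem lies in the $\ell^{\infty}$-fixing equivalence $d)$.
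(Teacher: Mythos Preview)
Your proposal is correct and follows exactly the approach the paper takes: the paper simply states that ``the same proof given above for the algebra of Dirichlet series works for $\mathcal{H}^{\infty}$,'' i.e., one runs the cycle $a)\Rightarrow b)\Rightarrow c)\Rightarrow d)\Rightarrow a)$ with Theorem~\ref{Thm:Hinfty}~1) supplying $d)\Rightarrow a)$ and Bayart--Montel giving $a)\Rightarrow b)$. Your attribution of $a)\Leftrightarrow b)$ to Bayart and $b)\Leftrightarrow c)$ to Lef\`evre also matches the paper's remark.
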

Since the space $\mathcal{A}(\C_+)$ is separable, we obtain the following consequence.
\begin{corollary}
    Let $C_{\Phi}\colon \mathcal{H}^{\infty}\to\mathcal{A}(\C_+)$ be a bounded composition operator. Then, $C_{\Phi}$ is compact.
\end{corollary}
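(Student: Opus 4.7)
The plan is to leverage Theorem \ref{thcompa} together with the separability of $\mathcal{A}(\C_+)$. First I would observe that since $C_{\Phi}$ is bounded as an operator into $\mathcal{A}(\C_+)\subset \mathcal{H}^{\infty}$, it is in particular bounded as an operator $\mathcal{H}^{\infty}\to\mathcal{H}^{\infty}$; by Theorem \ref{boundedness-Hinfty}, the symbol $\Phi$ belongs to $\mathcal{G}_{\infty}$ (this also follows directly from Theorem \ref{lemma: bay-cas} applied to $n^{-\Phi}=C_{\Phi}(n^{-s})\in \mathcal{H}^{\infty}$).

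Next I would argue that $\inf\{\Re\,\Phi(s):s\in\C_+\}>0$. Suppose, for the sake of contradiction, that this infimum is zero. Then Theorem \ref{Thm:Hinfty}(1) yields a closed subspace $Z\subseteq \mathcal{H}^{\infty}$ and an isomorphism $C_{\Phi}|_Z\colon Z\to C_{\Phi}(Z)$ with $Z\cong \ell^{\infty}$. But then $C_{\Phi}(Z)$ is an isomorphic copy of $\ell^{\infty}$ contained in the range of $C_{\Phi}$, and by hypothesis this range lies in $\mathcal{A}(\C_+)$. Since $\mathcal{A}(\C_+)=\overline{\operatorname{span}\{n^{-s}:n\in\N\}}$ by Theorem \ref{thpropbasa}(2), it is separable, and therefore cannot contain an isomorphic copy of the non-separable space $\ell^{\infty}$. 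This contradiction forces $\Phi(\C_+)\subset\C_{\varepsilon}$ for some $\varepsilon>0$.

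With this conclusion in hand, Theorem \ref{thcompa} gives the compactness of $C_{\Phi}\colon\mathcal{H}^{\infty}\to\mathcal{H}^{\infty}$. Finally, to upgrade this to compactness as an operator into $\mathcal{A}(\C_+)$, I would note that $\mathcal{A}(\C_+)$ is a closed subspace of $\mathcal{H}^{\infty}$ (Theorem \ref{thpropbasa}(1)). Thus, for any bounded sequence $\{f_n\}\subset \mathcal{H}^{\infty}$, the sequence $\{C_{\Phi}f_n\}$ lies in $\mathcal{A}(\C_+)$ and admits a subsequence converging in $\mathcal{H}^{\infty}$-norm; the limit then automatically lies in $\mathcal{A}(\C_+)$, so the convergence takes place in $\mathcal{A}(\C_+)$ as well.

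The main obstacle I anticipate is merely conceptual: one must recognise that the \emph{non-separability} of $\ell^{\infty}$ is exactly the obstruction that the hypothesis on the target space removes, so that the fixing-a-copy dichotomy of Theorem \ref{thcompa} immediately collapses into compactness. No delicate analysis on the symbol itself is needed beyond the input from Theorems \ref{Thm:Hinfty} and \ref{thcompa}.
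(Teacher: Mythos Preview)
Your proposal is correct and follows essentially the same route as the paper: the paper simply states that the corollary follows from Theorem~\ref{thcompa} because $\mathcal{A}(\C_+)$ is separable, and you have spelled out precisely this argument (separability of $\mathcal{A}(\C_+)$ forbids the image from containing a copy of $\ell^{\infty}$, so condition $d)$ of Theorem~\ref{thcompa} holds). The only minor difference is that you invoke Theorem~\ref{Thm:Hinfty}(1) directly to derive $\Phi(\C_+)\subset\C_\varepsilon$ before appealing to Theorem~\ref{thcompa}, whereas the paper would cite the equivalence $d)\Leftrightarrow b)$ in Theorem~\ref{thcompa} in one step; these are logically the same.
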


\section{About the closure of Gordon-Hedenmalm classes}\label{sec:closure-classes}
This section is devoted to the study of the closure of the classes of symbols $\mathcal{G}$,  $\mathcal{G}_{\infty}$ and $\mathcal{G_{\mathcal{A}}}$ under the uniform convergence on compacta. A first result related to the local uniform limit of Dirichlet series was proven in \cite[Theorem 4.2]{Aron-Bayart-cia}:
\begin{theorem}
Let $\Omega\subset\C$ be an open simply connected set and $H(\Omega)$ be the space of holomorphic functions on $\Omega$ endowed with the topology of uniform convergence on compacta. Then, Dirichlet polynomials are dense in $H(\Omega)$.
\end{theorem}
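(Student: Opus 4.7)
The plan is to establish the density via a Hahn--Banach duality argument: it suffices to show that every continuous linear functional $L$ on $H(\Omega)$ annihilating the Dirichlet monomials $\{n^{-s}:n\in\N\}$ vanishes identically, and Hahn--Banach will then give that the closed span of the Dirichlet polynomials equals $H(\Omega)$.

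To exploit the hypothesis on $L$, I would introduce the auxiliary function $F:\C\to\C$ defined by $F(z)=L(e^{-zs})$, where $e^{-zs}$ denotes the element $s\mapsto e^{-zs}$ of $H(\Omega)$. The map $z\mapsto e^{-zs}$ is holomorphic from $\C$ into $H(\Omega)$ (its difference quotients converge uniformly on compact subsets of $\Omega$), so $F$ is entire. Moreover, by continuity of $L$ in the Fr\'echet topology of $H(\Omega)$, there exist a compact set $K\subset\Omega$ and a constant $C>0$ with $|L(g)|\leq C\sup_K|g|$ for every $g\in H(\Omega)$. Setting $R=\sup_{s\in K}|s|$, this gives the pointwise bound $|F(z)|\leq Ce^{R|z|}$, so $F$ is of exponential type, in particular of order at most one.

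The crux is then to exploit that $F(\log n)=0$ for all $n\in\N$, which is exactly the vanishing hypothesis read through $n^{-s}=e^{-(\log n)s}$. I would invoke the classical inequality stating that the exponent of convergence of the zeros of an entire function is bounded above by its order: from $F$ having order at most one, it follows that $\sum_n |z_n|^{-(1+\varepsilon)}<\infty$ for every $\varepsilon>0$, where $\{z_n\}$ lists the zeros of $F$ counted with multiplicity. The subsequence $\{\log n\}$ alone however produces the divergent tail $\sum_{n\geq 2}(\log n)^{-(1+\varepsilon)}$, since $(\log n)^{1+\varepsilon}$ is eventually dominated by $n^{1/2}$ for any fixed $\varepsilon>0$. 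The contradiction forces $F\equiv 0$; expanding $F$ as a Taylor series at the origin and pulling $L$ through the uniformly convergent exponential series then yields $L(s^k)=0$ for every $k\geq 0$.

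To conclude, since $\Omega$ is simply connected, Runge's theorem supplies the density of complex polynomials in $H(\Omega)$, so the vanishing of $L$ on every polynomial in $s$ forces $L\equiv 0$. The step I expect to be most delicate is the careful use of the continuity of $L$ at two moments: first to derive the exponential growth of $F$, and second to justify that the power-series coefficients of $F$ at the origin are exactly $(-1)^k L(s^k)/k!$. Both points hinge on the Fr\'echet structure of $H(\Omega)$ and on the normal convergence of the relevant series on compact subsets of $\Omega$.
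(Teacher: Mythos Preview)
Your argument is correct. The paper itself does not prove this theorem; it is quoted there as \cite[Theorem 4.2]{Aron-Bayart-cia} without proof, so there is no in-paper argument to compare against. Your Hahn--Banach approach, reducing to the vanishing of an entire function of exponential type with zeros at every $\log n$, is a standard and clean route: the key point is that $n(r)$, the zero-counting function of $F$, grows at least like $e^{r}$ (there are $\lfloor e^{r}\rfloor$ integers $n$ with $\log n\le r$), while Jensen's formula bounds $n(r)$ by a constant times $r$ for any function satisfying $|F(z)|\le Ce^{R|z|}$. This forces $F\equiv 0$, and the identification $F^{(k)}(0)=(-1)^{k}L(s^{k})$ together with Runge's theorem for simply connected domains finishes the proof. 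The two delicate points you flagged (the growth bound on $F$ via the seminorm controlling $L$, and the term-by-term application of $L$ to the exponential series) are both routine consequences of the Fr\'echet structure of $H(\Omega)$ and are handled correctly.
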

In particular, this theorem implies that, in general, the local uniform limit of Dirichlet polynomials need not be a Dirichlet series. However, we shall show in Theorem \ref{closeness}  that if, in addition, we assume that the Dirichlet polynomials map the right half-plane into itself, then the limit is indeed a Dirichlet series. 

\
We shall give two positive results, namely, the classes $\mathcal{G}$ and $\mathcal{G}_{\infty}$ are `almost' closed under the local uniform convergence and a negative one: the class $\mathcal{G_{\mathcal{A}}}$ fails to satisfy this property. Furthermore, $\overline{\mathcal{G}_{\mathcal{A}}}=\overline{\mathcal{G}_{\infty}}$.

\ 

\begin{theorem}\label{closeness}
Let $\{\Phi_n\}_n$ be a sequence of functions in $\mathcal{G}_{\infty}$ converging uniformly on compact sets of $\C_+$. Then, $\Phi_n\to\Phi$ uniformly in half-planes $\C_{\varepsilon}$, $\varepsilon>0$, and either
\begin{itemize}
\item[(1)] $\Phi\in\mathcal{G}_{\infty}$, or
\item[(2)] $\Phi(s)= it$, for some $t\in\R$.
\end{itemize}
 In other words, the class $\mathcal{G}_{\infty}\cup\{i\R\}$ is closed when it is endowed with the uniform convergence on compacta. In case $(1)$, we also have that $c_{\Phi_n}= c_{\Phi}$ for $n$ large enough. In case $(2)$, we have that $c_{\Phi_n}=0$ for $n$ large enough.
\end{theorem}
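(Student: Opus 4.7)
The plan is to transfer the convergence problem from the unbounded functions $\{\Phi_n\}$ to the family of bounded Dirichlet series $\{k^{-\Phi_n}\}$, which lies in the unit ball of $\mathcal{H}^{\infty}$ because each $\Phi_n$ maps $\C_+$ into $\C_+$. For each fixed $k \in \N$, every subsequence of $\{k^{-\Phi_n}\}$ admits, by Bayart--Montel (Theorem \ref{th: bayart}), a further subsequence converging uniformly on every half-plane $\C_\varepsilon$ to some function in $\mathcal{H}^{\infty}$; the hypothesis $\Phi_n \to \Phi$ locally uniformly forces any such limit to equal $k^{-\Phi}$, so the whole sequence $k^{-\Phi_n}$ converges to $k^{-\Phi}$ uniformly on each $\C_\varepsilon$, and $k^{-\Phi} \in \mathcal{H}^{\infty}$ for every $k$. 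Theorem \ref{lemma: bay-cas} then yields the decomposition $\Phi(s) = c_\Phi s + \varphi_\Phi(s)$ with $c_\Phi \in \N \cup \{0\}$ and $\varphi_\Phi \in \mathcal{D}$ extending analytically to $\C_+$.

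Since $\Phi(\C_+) \subset \overline{\C_+}$, the open mapping theorem gives the dichotomy: either $\Phi$ is non-constant (so $\Phi(\C_+)$, being open in $\C$, lies in $\C_+$, hence $\Phi \in \mathcal{G}_{\infty}$), or $\Phi$ is a constant with value in $\C_+$ (again $\Phi \in \mathcal{G}_{\infty}$ with $c_\Phi = 0$), or $\Phi$ is a constant with value on $i\R$, which is Case (2). For the characteristic stabilization, uniform convergence of bounded Dirichlet series on each $\C_\varepsilon$ implies coefficient-wise convergence. Writing $k^{-\Phi_n}(s) = \sum_m d_m^{(n)} m^{-s}$, the first nonzero coefficient lies at index $k^{c_n}$ with value $k^{-a_1^{(n)}}$, where $a_1^{(n)}$ denotes the first Dirichlet coefficient of $\varphi_n$. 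The sequence $\{c_n\}$ must be bounded: otherwise along a subsequence $c_n \to \infty$, each fixed coefficient $d_m^{(n)}$ vanishes for large $n$, forcing $k^{-\Phi} \equiv 0$, which is absurd. The inequality $c_n > c_\Phi$ is then excluded because the coefficient at index $k^{c_\Phi}$ of $k^{-\Phi_n}$ would be identically zero, contradicting its convergence to the nonzero value $k^{-a_1^{(\Phi)}}$; in Case (2) the analogous comparison at index $1$ (where $k^{-it} \neq 0$) immediately yields $c_n = 0$ for large $n$.

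The main obstacle I foresee is ruling out $c_n < c_\Phi$ in Case (1), since $|d_{k^{c_n}}^{(n)}| = k^{-\text{Re}(a_1^{(n)})}$ may in principle tend to zero without yielding a single-coefficient contradiction. My plan is to pass to a subsequence with constant $c_n = c_0 < c_\Phi$, observe that $\varphi_n = \Phi_n - c_0 s$ would then converge locally uniformly to the function $\psi = \Phi - c_0 s = (c_\Phi - c_0) s + \varphi_\Phi$, which is unbounded on every $\C_\varepsilon$ since $c_\Phi - c_0 \geq 1$, and derive a contradiction by showing that the $\varphi_n$'s are uniformly controlled on each $\C_\varepsilon$: when $c_0 = 0$, Queffelec's Theorem \ref{queffelecs} gives individual bounds that may be propagated across $n$ via a Bayart--Montel argument applied to the bounded sequence $\{k^{-\varphi_n}\}$, while for $c_0 \geq 1$ a finer strip-by-strip analysis is needed. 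Once $c_n = c_\Phi$ for large $n$ has been established, the uniform convergence $\Phi_n \to \Phi$ on each $\C_\varepsilon$ reduces to uniform convergence of $\varphi_n - \varphi_\Phi \to 0$ there, which follows from the same Bayart--Montel/boundedness pipeline applied to this auxiliary bounded Dirichlet series.
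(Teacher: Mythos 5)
Your opening coincides with the paper's own argument: the uniform bound $\|k^{-\Phi_n}\|_{\infty}\leq 1$, Bayart--Montel (Theorem \ref{th: bayart}) plus uniqueness of limits to get $k^{-\Phi_n}\to k^{-\Phi}$ uniformly on every $\C_{\varepsilon}$, Theorem \ref{lemma: bay-cas} to write $\Phi=c_{\Phi}s+\varphi$, and the open-mapping dichotomy for the constant/boundary case is fine. The genuine gap is exactly where you flag it, and it is not a minor loose end. Your coefficient comparison rules out $c_n>c_{\Phi}$ and settles case (2), but the case $c_n<c_{\Phi}$ is left open, and the plan you sketch does not work as stated: if $c_n=c_0\geq 1$ along a subsequence, then $|k^{-\varphi_n}(s)|=k^{c_0\mathrm{Re}\,s}\,|k^{-\Phi_n(s)}|\leq k^{c_0\mathrm{Re}\,s}$ is all you know, so $\{k^{-\varphi_n}\}$ is \emph{not} a bounded sequence in $\mathcal{H}^{\infty}$ and Theorem \ref{th: bayart} cannot be applied to it; and when $c_0=0$ the bounds furnished by Theorem \ref{queffelecs} for the individual $\varphi_n=\Phi_n$ depend on $n$, so there is nothing to ``propagate''. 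The same objection undermines your final sentence: even once $c_n=c_{\Phi}$ is known, the sequence $\varphi_n-\varphi_{\Phi}$ is not known to be uniformly bounded on any $\C_{\varepsilon}$ (that it tends to $0$ there is precisely what you must prove), so ``the same Bayart--Montel/boundedness pipeline'' is unavailable. More fundamentally, uniform convergence of $k^{-\Phi_n}$ to $k^{-\Phi}$ on $\C_{\varepsilon}$ does not by itself give uniform convergence of $\Phi_n$ to $\Phi$: the map $w\mapsto k^{-w}$ flattens out where $\mathrm{Re}\,w$ is large, so on the part of $\C_{\varepsilon}$ where $\mathrm{Re}\,\Phi$ is large no control on $\Phi_n-\Phi$ is recovered from the exponentials.

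What is missing is the paper's two-step upgrade. First, work on vertical strips $\{\alpha<\mathrm{Re}\,s<\beta\}$: there $2^{\Phi}$ is bounded (because $\varphi\in\mathcal{H}^{\infty}(\C_{\alpha})$), so $2^{-\Phi_n}\to 2^{-\Phi}$ uniformly on the strip yields $2^{\Phi-\Phi_n}\to 1$ uniformly on the strip; taking logarithms gives integers $t_n$ with $(\Phi_n-\Phi)\log 2+2\pi i t_n\to 0$ uniformly on the strip, and the locally uniform convergence $\Phi_n\to\Phi$ kills the $2\pi i\Z$ ambiguity, so $\Phi_n\to\Phi$ uniformly on strips. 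Since a strip is unbounded in the imaginary direction while $\varphi_n$ and $\varphi$ are bounded there, the identity $\Phi_n-\Phi=(c_n-c_{\Phi})s+(\varphi_n-\varphi)$ then forces $c_n=c_{\Phi}$ for large $n$ --- this is what disposes of the case $c_n<c_{\Phi}$ that coefficients alone cannot reach. Second, to pass from strips to half-planes one uses that each difference $\varphi_n-\varphi$ is a bounded analytic function on $\C_{\varepsilon}$ (an individual bound suffices) together with the maximum principle/Phragm\'en--Lindel\"of for a half-plane: its supremum over $\C_{\varepsilon'}$ is attained on the line $\mathrm{Re}\,s=\varepsilon'$, which lies inside a strip where uniform smallness is already known. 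Without these two ingredients, both the stabilisation of the characteristic and the uniform convergence on $\C_{\varepsilon}$ remain unproven in your proposal.
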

\begin{proof}
Let $\{\Phi_n\}_n$ be a sequence of elements in the class $\mathcal{G}_{\infty}$ converging uniformly on compact sets of $\C_+$ to the function $\Phi$. Clearly, $\Phi:\C_+\to\overline{\C}_+$ and $\Phi$ is analytic in $\C_+$. Let us first assume that $\Phi:\C_+\to\C_+$. We begin by showing that $\Phi\in\mathcal{G}_{\infty}$ and then deducing the uniform convergence on half-planes $\C_{\varepsilon}$. Hence, choose $m\in\N$, $m\geq2$. We now consider the sequence of functions $\{g_n\}_n$ given by $g_n(s)=m^{-\Phi_n(s)}$. Since, for each $n$, $\Phi_n\in\mathcal{G}_{\infty}$, we deduce that $g_n\in\mathcal{H}^{\infty}$, for every $n\in\N$. Moreover, $\|g_n\|_{\infty}\leq1$ for all $n\in\N$. This is, $\{g_n\}_n$ is a bounded sequence of functions in $\mathcal{H}^{\infty}$. Knowing this, we apply Theorem \ref{th: bayart} to the sequence $\{g_n\}_n$. This gives us the existence of both a subsequence $g_{n_k}$ and a function $g\in\mathcal{H}^{\infty}$ such that
\begin{equation*}
    g_{n_k}\to g,\quad k\to\infty,
\end{equation*}
uniformly in half-planes $\C_{\varepsilon}$, $\varepsilon>0$. On the other hand, by hypothesis, $\Phi_n\to\Phi$ uniformly on compact sets of $\C_+$. This forces $g(s)=m^{-\Phi(s)}$. Then, 
\begin{equation}\label{converce}
    m^{-\Phi_{n_k}}\to m^{-\Phi},\quad k\to\infty,
\end{equation}
uniformly in $\C_{\varepsilon}$. In fact, by the uniqueness of the limit, the whole sequence $\{g_k\}_k$ converges uniformly to $g$. Therefore, we have shown that for every $m\in\N$, $m\geq2$, both $m^{-\Phi_n}\to m^{-\Phi}$ uniformly in $\C_{\varepsilon}$ and $m^{-\Phi}\in\mathcal{H}^{\infty}$. By Theorem \ref{lemma: bay-cas}, $\Phi(s)=c_{\Phi}s+\varphi(s)$, where $c_{\Phi}\in\N\cup\{0\}$ and $\varphi\in\mathcal{D}$. This is, $\Phi\in\mathcal{G}_{\infty}$. It remains to deduce the uniform convergence on half-planes $\C_{\varepsilon}$. Thus, notice that from \eqref{converce} we can deduce that
\begin{equation*}
    2^{\Phi(s)-\Phi_n(s)}\to 1
\end{equation*}
uniformly on vertical strips $\mathbb{S}=\{s\in\C:\alpha<\text{Re}(s)<\beta\}$, $0<\alpha<\beta$, where we have used that the function $2^{\Phi}$ is bounded in any vertical strip $\mathbb{S}$. Indeed, 
\[
|2^{\Phi(s)}|=2^{\text{Re}(\Phi(s))}\leq2^{c_{\Phi}\beta+M}
\]
with $M=\|\varphi\|_{\mathcal{H}^{\infty}(\C_{\alpha})}$, since $\varphi\in\mathcal{H}^{\infty}(\C_{\varepsilon})$, $\varepsilon>0$, (see \cite[Theorem 8.4.1]{queffelecs}). Taking the principal logarithm in any such vertical strip, as the principal logarithm is Lipschitz in a neighbourhood of $1$, we conclude that there exists a sequence of integers $\{t_n\}_n$ such that
\[
(\Phi_n(s)-\Phi(s))\log2+2\pi it_n\to0, \quad n\to\infty, 
\]
uniformly on vertical strips of $\C_{\varepsilon}$. On the other hand, by hypothesis, given a compact set $K\subset\mathbb{S}$, $\Phi_n(s)\to\Phi(s)$ as $n\to\infty$ uniformly in $K$. However, this, together with the fact that $h_n(s)=(\Phi_n(s)-\Phi(s))\log2+2\pi it_n$ tends to zero in any vertical strip $\mathbb{S}$ as $n\to\infty$, forces the sequence $\{t_n\}_n$ to have finitely many non-zero terms. Therefore, we have proved that $\Phi_n\to\Phi$ uniformly on vertical strips of $\C_{\varepsilon}$, $\varepsilon>0$. In particular, this implies that $c_{\Phi_n}=c_{\Phi}$ for $n$ big enough. Indeed, since $\Phi_n\in\mathcal{G}_{\infty}$, for every $n$, and so does $\Phi$, we have that
\[
\Phi_n(s)-\Phi(s)=(c_{\Phi_n}-c_{\Phi})s+\varphi_n(s)-\varphi(s).
\]
The fact that $\Phi_n(s)\to\Phi(s)$ uniformly in $\mathbb{S}$ forces the functions $h_n=\Phi_n-\Phi$ to be bounded in any vertical strip $\mathbb{S}$ for $n$ big enough. Therefore, necessarily, $c_{\Phi_n}=c_{\Phi}$ for $n$ large enough, since $\varphi_n$ and $\varphi$ are bounded on $\mathbb{S}$.

Now, by the convergence of the sequence $\{\Phi_n\}_n$ on vertical strips of $\C_{\varepsilon}$, we have that $\varphi_n\to\varphi$ uniformly on those strips. For every $n$, the Dirichlet series $\varphi_n$ belongs to $\mathcal{H}^{\infty}(\C_{\varepsilon})$, $\varepsilon>0$. Thus the sequence $\{\varphi_n\}_n$ is uniformly Cauchy as $n$ tends to $\infty$ in vertical strips of $\C_{\varepsilon}$ and, then, $\varphi_n$ converges to $\varphi$ uniformly in $\C_{\varepsilon}$, $\varepsilon>0$ thanks to \cite[Section III.5, Exercise 7]{gamelin} (or \cite[Chapter 12, Exercise
9]{rudin}). Consequently, $\Phi_n\to\Phi$ uniformly on $\C_{\varepsilon}$, $\varepsilon>0$. 

\ 
It remains to study the case when $\Phi$ touches the boundary. If this happens, for certain $t\in\R$ we have $\Phi(s)=it$, for all $s\in\C_+$. Regarding the uniform convergence on half-planes, the argument we just carried out also holds for this case.
\end{proof}
\begin{remark}
    This general fact about the closure of the class $\mathcal{G}_{\infty}\cup\{i\R\}$ under the local uniform convergence can be used to give an alternative proof of \cite[Theorem 5.2]{noi}.
\end{remark}

From the very definition of the Gordon-Hedenmalm class $\mathcal{G}$ we can deduce the following immediate consequence of the theorem we just proved.
\begin{corollary}
The class $\mathcal{G}\cup\{s\in\C:\emph{Re}(s)=\frac12\}$ is closed when equipped with the uniform convergence on compact sets.
\end{corollary}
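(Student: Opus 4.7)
The plan is to deduce the corollary directly from Theorem \ref{closeness}. Let $\{\Phi_{n}\}\subset\mathcal{G}\cup\{s\in\C:\text{Re}(s)=1/2\}$ converge locally uniformly on $\C_+$ to a function $\Phi$. Each constant $s_{0}$ with $\text{Re}(s_{0})=1/2$ lies in $\mathcal{G}_{\infty}$ (with characteristic $0$), so the whole sequence is contained in $\mathcal{G}_{\infty}$, and Theorem \ref{closeness} furnishes two options: either $\Phi\in\mathcal{G}_{\infty}$ (Case (1)), or $\Phi\equiv it$ for some $t\in\R$ (Case (2)). I would first rule out Case (2): the theorem also says $c_{\Phi_{n}}=0$ eventually, and then, by the defining condition of $\mathcal{G}$ and of the extra set $\{s:\text{Re}(s)=1/2\}$, every such $\Phi_{n}$ satisfies $\text{Re}(\Phi_{n})\geq 1/2$ on $\C_{+}$; pointwise passage to the limit gives $\text{Re}(\Phi)\geq 1/2$, which contradicts $\Phi\equiv it$ with $t\in\R$.

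Having confirmed $\Phi\in\mathcal{G}_{\infty}$, I would split on the characteristic $c_{\Phi}$. If $c_{\Phi}\geq 1$, then $\Phi\in\mathcal{G}$ by definition. If $c_{\Phi}=0$, Theorem \ref{closeness} guarantees $c_{\Phi_{n}}=0$ for $n$ large, so as above $\text{Re}(\Phi_{n})\geq 1/2$ eventually and hence $\text{Re}(\Phi)\geq 1/2$ on $\C_{+}$. The open mapping theorem then decides the matter: if $\Phi$ is non-constant, $\Phi(\C_{+})$ is an open subset of $\overline{\C}_{1/2}$ and therefore is contained in $\C_{1/2}$, giving $\Phi\in\mathcal{G}$; if $\Phi\equiv s_{0}$ is constant, then $\text{Re}(s_{0})\geq 1/2$, and $\Phi$ belongs to $\mathcal{G}$ when $\text{Re}(s_{0})>1/2$ and to $\{s\in\C:\text{Re}(s)=1/2\}$ when $\text{Re}(s_{0})=1/2$.

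I do not anticipate any real obstacle once Theorem \ref{closeness} is in hand: the only delicate point is the open mapping step, which promotes the weak inequality $\text{Re}(\Phi)\geq 1/2$ to the strict inequality required by the definition of $\mathcal{G}$, leaving the constant-on-the-critical-line case as the only residue. As a sanity check, the functions $\Phi_{n}(s)\equiv 1/2+1/n$ lie in $\mathcal{G}$ and converge locally uniformly to the constant $1/2$, which lies on the critical line, so the extra set in the statement cannot be dropped.
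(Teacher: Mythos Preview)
Your proof is correct and follows the same approach as the paper, which simply states that the corollary is an immediate consequence of Theorem \ref{closeness} together with the definition of $\mathcal{G}$. You have spelled out the details carefully—including the use of the open mapping theorem to upgrade $\text{Re}(\Phi)\geq 1/2$ to $\Phi(\C_+)\subset\C_{1/2}$ in the non-constant case, and the observation that constants on the critical line already lie in $\mathcal{G}_\infty$—which the paper leaves implicit.
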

Nonetheless, the class $\mathcal{G}_{\mathcal{A}}$ is no longer `almost' closed under the local uniform convergence in $\C_+$. The reason being that $\overline{\mathcal{G}_{\mathcal{A}}}=\overline{\mathcal{G}_{\infty}}$ (see Theorem \ref{th: ga no cerrada} right below) and that there are non-constant symbols in $\mathcal{G}_{\infty}$ which do not belong to $\mathcal{G}_{\mathcal{A}}$ (Example \ref{example1}).
\begin{theorem}\label{th: ga no cerrada}
$\overline{\mathcal{G}_{\mathcal{A}}}=\overline{\mathcal{G}_{\infty}}$.
\end{theorem}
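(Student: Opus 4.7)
Since $\mathcal{G}_\mathcal{A}\subseteq \mathcal{G}_\infty$ the inclusion $\overline{\mathcal{G}_\mathcal{A}}\subseteq\overline{\mathcal{G}_\infty}$ is automatic, so the content of the theorem is the reverse one. Given $\Phi\in\mathcal{G}_\infty$ with decomposition $\Phi(s)=c_\Phi s+\varphi(s)$, my plan is to approximate $\Phi$ by the horizontal translates
\[
\Phi_n(s):=\Phi\!\left(s+\tfrac{1}{n}\right),\qquad n\in\N,
\]
and to verify three things: $\Phi_n\in\mathcal{G}_\infty$, $\Phi_n\in\mathcal{G}_\mathcal{A}$, and $\Phi_n\to\Phi$ locally uniformly on $\C_+$.

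The first and third of these will be soft. The identity $\Phi_n(s)=c_\Phi s+\bigl(c_\Phi/n+\varphi(s+1/n)\bigr)$ exhibits $\Phi_n$ as the sum of $c_\Phi s$ and the convergent Dirichlet series $c_\Phi/n+\sum_{k\geq 1}(a_k k^{-1/n})k^{-s}$, so $\Phi_n\in\mathcal{G}_\infty$ with $c_{\Phi_n}=c_\Phi$. For the local uniform convergence, any compact $K\subset\C_+$ sits inside a slightly enlarged compact subset of $\C_+$ on which $\Phi$ is uniformly continuous, so $|\Phi(s+1/n)-\Phi(s)|\to 0$ uniformly for $s\in K$.

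The substantial step will be showing $\Phi_n\in\mathcal{G}_\mathcal{A}$, and in fact I will prove the stronger statement that each $\Phi_n$ is uniformly continuous on the whole of $\C_+$, which renders the uniform continuity on every $A_M$ automatic. The linear piece $c_\Phi s$ is Lipschitz, so the issue reduces to showing that $\varphi(\,\cdot\,+1/n)$ is uniformly continuous on $\C_+$, equivalently that $\varphi$ is uniformly continuous on $\C_{1/n}$. Here I will invoke the key fact, already used in the proof of Theorem~\ref{closeness} via \cite[Theorem~8.4.1]{queffelecs}, that for any $\Phi\in\mathcal{G}_\infty$ the attached Dirichlet series $\varphi$ lies in $\mathcal{H}^\infty(\C_\varepsilon)$ for every $\varepsilon>0$; Corollary~\ref{coro:cont unif} then delivers uniform continuity of $\varphi$ on $\C_{1/n}$, and the argument closes. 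The main obstacle is precisely to recognise this uniform-boundedness reserve for $\varphi$ one half-plane to the left of $\C_+$; once it is in hand, the translation trick is immediate, and combined with Example~\ref{example1} the result shows that $\mathcal{G}_\mathcal{A}$ is genuinely not closed while its closure is as large as possible.
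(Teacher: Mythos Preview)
Your proof is correct and follows essentially the same approach as the paper's: both approximate a given $\Phi\in\mathcal{G}_\infty$ by its horizontal translates $\Phi_n(s)=\Phi(s+1/n)$, and both argue that these translates are uniformly continuous on all of $\C_+$ because $\varphi\in\mathcal{H}^\infty(\C_\varepsilon)$ for every $\varepsilon>0$. The only cosmetic differences are that the paper phrases the uniform continuity via boundedness of $\Phi'$ on $\C_\varepsilon$ and invokes Theorem~\ref{closeness} to upgrade to half-plane convergence, whereas you cite Corollary~\ref{coro:cont unif} and argue local uniform convergence directly; neither difference is substantive.
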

\begin{proof}
First we see that $\mathcal{G}_{\infty}\subset\overline{\mathcal{G}_{\mathcal{A}}}$. Take $\Phi\in\mathcal{G}_{\infty}$. By Theorem \ref{closeness} we have that the uniform convergence on compacta is equivalent to the uniform convergence in half-planes $\C_{\varepsilon}$, $\varepsilon>0$. Consider the sequence $f_n(s)=\Phi(s+1/n)$, $n\in\N$. Notice that $f_n\in\mathcal{G}_{\mathcal{A}}$ for every $n$. Indeed, $\Phi$ is uniformly continuous in every half-plane $\C_{\varepsilon}$, $\varepsilon>0$ ($\Phi'$ is bounded in every such half-plane), so any horizontal translation $\Phi_{\tau}$, $\tau>0$, is uniformly continuous in $\C_+$. Since $f_n\to\Phi$ uniformly in $\C_{\varepsilon}$, we have that $\Phi\in \overline{\mathcal{G}_{\mathcal{A}}}$. The fact that $\mathcal{G}_{\mathcal{A}}\subset\overline{\mathcal{G}_{\infty}} $ follows immediately from the definition of the classes $\mathcal{G}_{\mathcal{A}}$ and $\mathcal{G}_{\infty}$.
\end{proof}

\section{Semigroups of analytic functions}\label{sec:semanfunc}
The study of semigroups of analytic functions in  the unit disc $\D$ (and then in the right half-plane) started in the early 1900s. The work \cite{porta}, appeared in 1978, due to Berkson and Porta, meant a renewed interest in the study of semigroups of analytic functions. The state of the art can be seen in \cite{manoloal}. We recall the definition straightaway.
\begin{definition} \label{def:semigroup}
We say that a family $\{\Phi_t\}_{t\geq0}$ (in short, $\{\Phi_t\}$) of analytic functions $\Phi_t:\C_{+}\to\C_{+}$ is a semigroup if the following two algebraic properties are satisfied:
\begin{itemize}
\item $\Phi_0(s)=s.$
    \item For every $t,u\geq0$, $\Phi_t\circ\Phi_u(s)=\Phi_{t+u}(s)$.
    \end{itemize}
If, in addition, $\Phi_t$ converges to $\Phi_0$ uniformly on compact subsets of $\C_{+}$ as $t\to0^+$, we say that $\{\Phi_t\}$ is a continuous semigroup.
\end{definition}
One of the goals in this paper is the study of continuous semigroups of analytic functions $\{\Phi_{t}\}$ in the class $\mathcal{G}_{\mathcal{A}}$ and its connection to the strongly continuous semigroups of composition operators in $\mathcal{A}(\C_+)$. See \cite{noi} for the study of this relation between semigroups of composition operators in the Hardy spaces of Dirichlet series $\mathcal{H}^p$, $p\geq1$, and semigroups of analytic functions in the classes $\mathcal{G}$ and $\mathcal{G}_{\infty}$.

In \cite[Proposition 3.2]{noi}, it was proven that for any continuous semigroup in $\mathcal{G}_{\infty}$, we have that $\Phi_t(s)=s+\varphi_t(s)$, $s\in\C_+$. That is, $c_{\Phi_t}=1$ for every $t$. Since $\mathcal{G}_{\mathcal{A}}\subset\mathcal{G}_{\infty}$, the same conclusion holds for continuous semigroups $\{\Phi_t\}$ in $\mathcal{G}_{\mathcal{A}}$. Notice that this result implies that continuous semigroups of analytic functions in the class $\mathcal{G}_{\infty}$ coincide with the continuous semigroups in $\mathcal{G}$.

\ 
 If $\{\Phi_{t} \}$ is a semigroup in the class $\mathcal G_{\infty}$, then $\text{Re}( \Phi_{t}(s))\geq \text{Re}(s)$ for all $s$. This clearly implies that, unless $\{\Phi_{t}\}$ is the trivial semigroup, $\{\Phi_t\}$ has no fixed points in $\C_{+}$ and, in fact, by \cite[Theorem 8.3.1]{manoloal}, the Denjoy-Wolff point of the semigroup is $\infty$, that is,
\begin{equation}\label{DW}
\lim_{t\rightarrow +\infty }\Phi_{t}(s)=\infty,\quad s\in\C_{+}.
\end{equation}

\ 

Berkson and Porta also proved in \cite{porta} the existence of the following limit
\begin{equation}\label{stress}
H(s)=\lim_{t\to0^+}\frac{\Phi_t(s)-s}{t}, \qquad \textrm{ for all } s\in \C_{+}
 \end{equation}
and such limit is uniform on compact sets of $\C_{+}$. In particular, $H$ is holomorphic.
Moreover, $t\mapsto \Phi _{t}(s)$ is the solution of the Cauchy problem:
\begin{equation}\label{cauchy}
\frac{\partial \Phi _{t}(s)}{\partial t}=H(\Phi _{t}(s))\quad
\mbox{and} \quad \Phi _{0}(s)= s\in \C_{+}.
\end{equation}
The function $H$ is called the  {\sl infinitesimal generator} of
the semigroup $\left\{ \Phi _{t}\right\} .$ 
In fact, they also proved, see \cite[Theorem 2.6]{porta}, that $H$ is the infinitesimal generator of a continuous semigroup of analytic functions with Denjoy-Wolff point $\infty$ if and only if $H(\C_{+})\subset\overline{ \C_{+}}$. 

\ 
Since the Denjoy-Wolff point of a semigroup in the Gordon-Hedenmalm class is $\infty$ its infinitesimal generator is a holomorphic function sending the right half-plane into its closure. In \cite[Theorem 5.1]{noi}, a description of the infinitesimal generators of the continuous semigroups in $\mathcal{G}_{\infty}$ was given.
\begin{theorem}\label{gcoro}
Let $H:\C_+\to\overline{\C}_+$ be analytic. Then, the following statements are equivalent:
\begin{enumerate}[a)]
    \item $H$ is the infinitesimal generator of a continuous semigroup of elements in the class $\mathcal{G}$.
    \item $H$ is bounded in every half-plane $\C_{\varepsilon}$ and $H\in\mathcal{D}$.
    \item $H\in\mathcal{D}$.
\end{enumerate}
\end{theorem}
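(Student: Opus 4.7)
The plan is to establish the chain $(b) \Rightarrow (c) \Rightarrow (b)$ (where $(b) \Rightarrow (c)$ is tautological) together with the equivalence $(a) \Leftrightarrow (b)$. For $(c) \Rightarrow (b)$, I would invoke the strengthening of Theorem \ref{queffelecs} due to \cite[Theorem 1]{brekou} mentioned in the remark following its statement: since $H$ is a Dirichlet series analytically extending to $\C_+$ whose image $H(\C_+) \subset \overline{\C}_+$ avoids any two distinct points of $\C \setminus \overline{\C}_+$ (for instance $-1$ and $-2$), we get $\sigma_u(H) \leq 0$, so $H$ is bounded on every half-plane $\C_\varepsilon$, $\varepsilon > 0$.

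For $(a) \Rightarrow (b)$, by \cite[Proposition 3.2]{noi}---already invoked in the excerpt---each $\Phi_t$ in a continuous semigroup of the class $\mathcal{G}$ has the form $\Phi_t(s) = s + \varphi_t(s)$ with $\varphi_t \in \mathcal{D}$. Combined with the inequality $\Re(\Phi_t(s)) \geq \Re(s)$ of \eqref{DW}, each $\varphi_t$ maps $\C_+$ into $\overline{\C}_+$, so for every $t > 0$ the scaled Dirichlet series $\varphi_t/t$ again has nonnegative real part. Formula \eqref{stress} gives $\varphi_t/t \to H$ uniformly on compacta of $\C_+$ as $t \to 0^+$. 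Fixing $\varepsilon > 0$ and a reference point $s_0 \in \C_\varepsilon$, the sequence $\{\varphi_t(s_0)/t\}_{t \in (0,1]}$ is bounded (it converges to $H(s_0)$), and a Herglotz/Harnack estimate for analytic functions on $\C_\varepsilon$ with nonnegative real part propagates this pointwise bound to uniform boundedness of $\{\varphi_t/t\}$ on every compact subset of $\C_\varepsilon$, indeed on all of $\C_\varepsilon$ after a slight loss of abscissa. After translating horizontally by $\varepsilon$, the resulting sequence is bounded in $\mathcal H^\infty$, so Bayart's Montel theorem (Theorem \ref{th: bayart}) yields a subsequence $\varphi_{t_n}/t_n$ converging uniformly on every half-plane to some $\mathcal H^\infty$ function; by compact-open uniqueness this limit must coincide with the translate of $H$, so $H \in \mathcal D$ and is bounded on each $\C_\varepsilon$.

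For $(b) \Rightarrow (a)$, Berkson--Porta's theorem \cite[Theorem 2.6]{porta} produces a continuous semigroup $\{\Phi_t\}$ with Denjoy--Wolff point $\infty$ and infinitesimal generator $H$. The crucial step is verifying $\Phi_t \in \mathcal{G}$ for each $t$. I would run a Picard iteration $\Phi_t^{(0)}(s) := s$, $\Phi_t^{(n+1)}(s) := s + \int_0^t H(\Phi_u^{(n)}(s))\,du$, and prove inductively that $\Phi_t^{(n)}(s) - s \in \mathcal D$. The algebraic key is that if $H(z) = \sum_m a_m m^{-z}$ and $\Phi_u^{(n)}(s) - s = \varphi_u^{(n)}(s) \in \mathcal D$, then $H(s + \varphi_u^{(n)}(s)) = \sum_m a_m m^{-s} \exp(-\log m \cdot \varphi_u^{(n)}(s))$; each exponential factor is itself a Dirichlet series, as $\mathcal D$ is closed under products via Dirichlet convolution and the Taylor expansion $\exp(-\log m \cdot \varphi_u^{(n)}) = \sum_k ((-\log m)^k / k!)(\varphi_u^{(n)})^k$ converges absolutely on any half-plane where $\varphi_u^{(n)}$ is bounded. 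Integration in $u$ and summation in $m$ preserve membership in $\mathcal D$ on a common half-plane of convergence. Standard Picard theory yields $\Phi_t^{(n)} \to \Phi_t$ uniformly on compacta of $\C_+$ for small $t$, and each iterate has characteristic $1$ and thus lies in $\mathcal{G}_\infty$; Theorem \ref{closeness} upgrades the limit to $\Phi_t \in \mathcal{G}_\infty$ with $c_{\Phi_t} = 1$ (the case $\Phi_t \equiv it$ is ruled out since $\Phi_t$ is close to the identity for small $t$), hence $\Phi_t \in \mathcal{G}$. The semigroup law extends the conclusion to all $t \geq 0$.

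The principal obstacle is the inductive Picard step in $(b) \Rightarrow (a)$: jointly controlling the convergence of the inner exponential expansion and the outer Dirichlet sum over $m$, uniformly in the iteration, so that the iterates remain Dirichlet series on a common half-plane that does not shrink. A secondary but delicate point is the Herglotz/Harnack estimate in $(a) \Rightarrow (b)$, needed to convert pointwise bounds on the family $\{\varphi_t/t\}$ at a single reference point into the uniform bound on half-planes required to apply Bayart's Montel theorem.
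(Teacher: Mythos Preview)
This theorem is not proved in the present paper; it is quoted from \cite[Theorem~5.1]{noi}, so there is no in-paper proof to compare against. I comment only on your proposal.

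Your implication $(c)\Rightarrow(b)$ is correct, though you do not need \cite{brekou}: since $\overline{\C}_+\subset\C_{-1}$, Theorem~\ref{queffelecs} applies directly with $\nu=-1$.

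In $(a)\Rightarrow(b)$ there is a real gap at precisely the place you flag as ``secondary but delicate.'' The Harnack/Schwarz--Pick estimate gives uniform bounds for $\{\varphi_t/t\}$ on \emph{compacta} of $\C_+$, but your step ``indeed on all of $\C_\varepsilon$ after a slight loss of abscissa'' is unjustified: a family of self-maps of $\C_+$ bounded at one point is normal but need not be uniformly bounded on any half-plane, and you have not explained why the Dirichlet-series structure rescues this. The clean fix makes the difficulty disappear entirely. Each $\psi_t:=\varphi_t/t$ is a Dirichlet series mapping $\C_+$ into $\overline{\C}_+$, hence (if non-constant) $\psi_t\in\mathcal{G}_\infty$ with characteristic $0$. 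Now either invoke Theorem~\ref{closeness} directly to conclude that the locally uniform limit $H$ lies in $\mathcal{G}_\infty\cup i\R\subset\mathcal{D}$, or, equivalently, run its proof: apply Bayart--Montel (Theorem~\ref{th: bayart}) to the family $\{m^{-\psi_t}\}$, which is bounded by $1$ in $\mathcal{H}^\infty$ for free, rather than to $\{\psi_t\}$ itself; then Theorem~\ref{lemma: bay-cas} gives $H\in\mathcal{D}$.

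For $(b)\Rightarrow(a)$ your Picard scheme is workable, and the obstacle you identify is genuine but lighter than it looks. The inductive step ``$H\circ\Phi_u^{(n)}\in\mathcal{D}$'' does not require juggling the double series by hand: since $\Re\varphi_u^{(n)}\ge0$ (immediate by induction from $\Re H\ge0$), each $\Phi_u^{(n)}$ lies in $\mathcal{G}_\infty$ with characteristic $1$ and satisfies $\Re\Phi_u^{(n)}(s)\ge\Re s$. Translating by $\varepsilon>0$, one has $H(\cdot+\varepsilon)\in\mathcal{H}^\infty$ and $\Phi_u^{(n)}(\cdot+\varepsilon)-\varepsilon\in\mathcal{G}_\infty$, so Theorem~\ref{boundedness-Hinfty} gives $H\circ\Phi_u^{(n)}\in\mathcal{H}^\infty(\C_\varepsilon)\subset\mathcal{D}$ directly, with a bound independent of the iteration. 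For the passage to the limit you can again use Theorem~\ref{closeness} on the sequence $\{\Phi_t^{(n)}\}\subset\mathcal{G}_\infty$ instead of tracking a common half-plane of convergence.
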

Using this description of the infinitesimal generators of continuous semigroups in the class $\mathcal{G}$, the three authors proved in \cite{noi1} the following result which will play a key role in the proof of the main theorem from Section \ref{sec:semigroupsinalg}. 
\begin{theorem}\label{convunifsemg}
Let $\{\Phi_{t}\}$ be a continuous semigroup in the Gordon-Hedenmalm class $\mathcal{G}$, then $\{\Phi_{t}\}$ converges to the identity map uniformly in the right half-plane, as $t$ goes to zero.
\end{theorem}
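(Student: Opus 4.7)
By \cite[Proposition 3.2]{noi}, continuity of $\{\Phi_t\}$ in $\mathcal{G}$ forces $\Phi_t(s)=s+\varphi_t(s)$ with $\varphi_t\in\mathcal{D}$, so the goal is to establish
\[
\lim_{t\to 0^+}\sup_{s\in\C_+}|\varphi_t(s)|=0.
\]
By Theorem \ref{gcoro}, the infinitesimal generator $H$ of $\{\Phi_t\}$ belongs to $\mathcal{D}$, and the Berkson--Porta Cauchy problem \eqref{cauchy} yields the representation $\varphi_t(s)=\int_0^t H(\Phi_u(s))\,du$. Since the Denjoy--Wolff point is $\infty$ one has $\mathrm{Re}\,H\geq 0$, hence $u\mapsto\mathrm{Re}(\Phi_u(s))$ is non-decreasing, so every trajectory stays in $\overline{\C_{\mathrm{Re}(s)}}$.

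\emph{Step 1: uniform control on each $\C_\varepsilon$.} Fix $\varepsilon>0$. By Theorem \ref{queffelecs}, $M_\varepsilon:=\|H\|_{\mathcal{H}^\infty(\C_\varepsilon)}<\infty$; if $s\in\C_\varepsilon$, then $\Phi_u(s)\in\C_\varepsilon$ for every $u\geq 0$, so $|\varphi_t(s)|\leq tM_\varepsilon$. This already gives uniform convergence on each half-plane $\C_\varepsilon$, which is the easy part of the statement.

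\emph{Step 2: passing from $\C_\varepsilon$ to $\C_+$.} The previous bound degenerates as $\varepsilon\to 0^+$ because $M_\varepsilon$ may blow up ($H\in\mathcal{D}$ with $\mathrm{Re}\,H\geq 0$ need not belong to $\mathcal{H}^\infty(\C_+)$). I would approximate $H$ by its truncations: write $H=H_N+R_N$ with $H_N=\sum_{n=1}^N a_nn^{-s}$ a Dirichlet polynomial. Each $H_N$ is bounded on $\overline{\C_+}$ with $\|H_N\|_{\mathcal{H}^\infty(\C_+)}\leq\sum_{n=1}^N|a_n|$, and therefore
\[
|\varphi_t(s)|\leq t\sum_{n=1}^N|a_n|+\int_0^t|R_N(\Phi_u(s))|\,du.
\]
To estimate the tail integral, split $[0,t]$ according to whether $\Phi_u(s)\in\C_\varepsilon$ or not: on the good set $|R_N|\leq\|R_N\|_{\mathcal{H}^\infty(\C_\varepsilon)}$, which can be made arbitrarily small by choosing $N$ large (uniform convergence of the Dirichlet series $H$ on $\C_\varepsilon$); on the bad set one bounds the Lebesgue measure of times $u$ for which $\Phi_u(s)\notin\C_\varepsilon$ via
\[
\frac{d}{du}\mathrm{Re}(\Phi_u(s))=\mathrm{Re}(H(\Phi_u(s)))\geq 0,
\]
which quantifies the speed at which the trajectory escapes the boundary layer. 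Careful tuning $\varepsilon=\varepsilon(t)$ and $N=N(t)$ would then close the estimate.

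\emph{Main obstacle.} The heart of the difficulty is precisely the possible boundary blow-up of $H$: the uniform estimate on each $\C_\varepsilon$ from Step~1 does not pass to $\C_+$ directly. The substantive work is making rigorous the quantitative trade-off --- where $|H|$ is large on $\partial\C_+$, $\mathrm{Re}\,H$ is also large, and the trajectory $\Phi_u(s)$ leaves the boundary layer proportionally fast. Complementary ingredients likely needed are Bayart--Montel compactness (Theorem \ref{th: bayart}), applied to a normalized family such as $\{\varphi_t/\sup_{\C_+}|\varphi_t|\}$ to contradict the existence of a subsequence along which $\sup_{\C_+}|\varphi_t|$ fails to vanish, together with Herglotz/Harnack control of $H$ stemming from $\mathrm{Re}\,H\geq 0$.
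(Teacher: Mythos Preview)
The paper does not contain a proof of Theorem~\ref{convunifsemg}; it is quoted there from \cite{noi1} (see the sentence immediately preceding the statement), so there is no in-paper argument to compare against. I can therefore only assess your proposal on its own merits.

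Your Step~1 is correct and is the easy half of the statement: the bound $|\varphi_t(s)|\le t\,\|H\|_{\mathcal H^\infty(\C_\varepsilon)}$ for $s\in\C_\varepsilon$ gives uniform convergence on every $\C_\varepsilon$ (this much is already in \cite[Theorem~4.6]{noi}).

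Step~2, however, is a sketch with an explicitly acknowledged gap, not a proof. The decomposition $H=H_N+R_N$ and the time-splitting are reasonable first moves, but the entire difficulty is concentrated in the ``bad'' integral $\int_0^{T_\varepsilon(s)}|R_N(\Phi_u(s))|\,du$, where $\Phi_u(s)$ lies in the boundary layer $\{0<\mathrm{Re}\,w<\varepsilon\}$, and you do not estimate it. Your guiding heuristic, that ``where $|H|$ is large on $\partial\C_+$, $\mathrm{Re}\,H$ is also large,'' is not established and fails pointwise: for $H(s)=(1+2^{-s})/(1-2^{-s})$ one has $H(s)\approx 2/(s\log 2)$ near $s=0$, so along approaches with $|\mathrm{Im}\,s|\gg\mathrm{Re}\,s$ the modulus $|H|$ blows up while $\mathrm{Re}\,H/|H|\to 0$. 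Thus the trajectory can move fast but nearly parallel to $i\R$, and nothing you have written bounds the time it spends in the layer. Without a quantitative substitute for this heuristic --- some integrated Harnack or Julia--Carath\'eodory type control along the actual orbit --- the promised ``careful tuning of $\varepsilon(t)$ and $N(t)$'' cannot be closed. The Bayart--Montel compactness route you mention at the end is plausible, but it is only named, not executed. As written, the proposal correctly identifies where the obstacle lies but does not overcome it.
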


\section{Semigroups of composition operators in $\mathcal{A}(\C_+)$}\label{sec:semigroupsinalg}
There are numerous areas in Analysis where the theory of strongly continuous semigroups of bounded operators on Banach spaces has proven to be quite fruitful. We recall this  notion. 
\begin{definition}
Let $X$ be a Banach space and $\{T_{t}\}_{t\geq0}$ (in short, $\{T_t\}$) a family of bounded operators from $X$ into itself. We say that  $\{T_{t}\}$ is a semigroup if the following two algebraic properties are satisfied:
\begin{enumerate}
\item[(i)] $T_0=\mathrm{Id}$, where $\mathrm{Id}$ denotes the identity map on $X$;
    \item[(ii)] For every $t,u\geq0$, $T_t\circ T_u=T _{t+u}$.
    \end{enumerate}
If, in addition, it satisfies that 
\begin{enumerate}
\item[(iii)] $\lim_{t\to 0^{+}}T_{t}f=f$ for all $f\in X$
    \end{enumerate} we say that it is a strongly continuous semigroup (also known as $C_{0}$-semigroup).
\end{definition}
It is well-known that (iii) is equivalent to the fact that, for each $f\in X$, the map $[0,+\infty)\ni t\mapsto T_{t}f$ is continuous \cite[Page 3, Proposition 1.3]{engels}.
Clearly, if we have a semigroup of analytic functions $\{\Phi_{t}\}$, we obtain a semigroup of composition operators $\{C_{\Phi_{t}}\}$, whenever such composition operators are well-defined. Conversely, if $T_t=C_{\Phi_t}$ is a semigroup of bounded operators in $\mathcal{A}(\C_+)$, $\{\Phi_t\}$ is a semigroup in $\mathcal{G}_{\mathcal{A}}$. This was established  in the setting of $\mathcal{H}^p$ spaces, see \cite[Proposition 4.3]{noi} for a proof of this statement. The same proof holds for the algebra $\mathcal{A}(\C_+)$. 

\ 
The relationship between semigroups of analytic functions in $\mathcal{G}_{\mathcal{A}}$ and semigroups of composition operators in $\mathcal{A}(\C_+)$ is not only algebraic, but also topological. As in the case of the $\mathcal{H}^p$ spaces, for $1\leq p<\infty$ (\cite[Theorem 4.6]{noi}), there exists a one-to-one correspondence between each other. This first result clearly holds in virtue of \cite[Theorem 4.4]{noi}, since any strongly continuous semigroup in $\mathcal{A}(\C_+)$ is strongly continuous in $\mathcal{H}^2$.
\begin{proposition}\label{Ttfuertcont}
    Let $\{\Phi_t\}$ be a semigroup in $\mathcal{G}_{\mathcal{A}}$ and $T_t=C_{\Phi_t}$ for each $t$. If $\{T_t\}$ is strongly continuous in $\mathcal{A}(\C_+)$, then $\{\Phi_t\}$ is continuous.
\end{proposition}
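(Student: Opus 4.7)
The plan is to reduce the claim to the already-established Hilbert-space analogue, namely \cite[Theorem 4.4]{noi}. The bridge is the norm inequality $\|g\|_{\mathcal{H}^{2}}\leq\|g\|_{\mathcal{H}^{\infty}}$ valid for Dirichlet series (so that $\mathcal{A}(\C_{+})\hookrightarrow\mathcal{H}^{\infty}\hookrightarrow\mathcal{H}^{2}$ continuously), together with the density of Dirichlet polynomials in $\mathcal{H}^{2}$.

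The first step is to check that each $T_{t}$ extends to a bounded operator on $\mathcal{H}^{2}$. Since $\Phi_{t}\in\mathcal{G}_{\mathcal{A}}\subset\mathcal{G}_{\infty}$, we may write $\Phi_{t}(s)=c_{\Phi_{t}}s+\varphi_{t}(s)$, and by Gordon--Hedenmalm (Theorem~\ref{gordon}) the operator $C_{\Phi_{t}}$ acts boundedly on $\mathcal{H}^{2}$ as soon as $c_{\Phi_{t}}\geq 1$. To rule out $c_{\Phi_{t}}=0$, I would apply strong continuity to the test function $f(s)=2^{-s}\in\mathcal{A}(\C_{+})$, obtaining $\|2^{-\Phi_{t}}-2^{-s}\|_{\infty}\to 0$ as $t\to 0^{+}$. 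Evaluating on the positive real axis and using that $\varphi_{t}(s)$ stabilises to its first Dirichlet coefficient $a_{1}^{(t)}$ as $\text{Re}(s)\to+\infty$, together with the integrality of $c_{\Phi_{t}}$, forces $c_{\Phi_{t}}=1$ for $t$ in some right neighbourhood $[0,t_{0}]$ of the origin. The multiplicative identity $c_{\Phi_{t+u}}=c_{\Phi_{t}}c_{\Phi_{u}}$, a direct consequence of the semigroup law, then propagates $c_{\Phi_{t}}=1$ to every $t\geq 0$; hence $\Phi_{t}\in\mathcal{G}$ for all $t$.

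The second step is the transfer of strong continuity from $\mathcal{A}(\C_{+})$ to $\mathcal{H}^{2}$. For each Dirichlet polynomial $P$, the inequality $\|T_{t}P-P\|_{\mathcal{H}^{2}}\leq\|T_{t}P-P\|_{\mathcal{H}^{\infty}}\to 0$ is immediate from the hypothesis. Combining this with the density of Dirichlet polynomials in $\mathcal{H}^{2}$ and the uniform control of $\|C_{\Phi_{t}}\|_{\mathcal{H}^{2}\to\mathcal{H}^{2}}$ afforded by Gordon--Hedenmalm in the regime $c_{\Phi_{t}}=1$, a standard $\varepsilon/3$ argument yields $\|T_{t}f-f\|_{\mathcal{H}^{2}}\to 0$ for every $f\in\mathcal{H}^{2}$. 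Applying \cite[Theorem 4.4]{noi} then delivers the continuity of the semigroup $\{\Phi_{t}\}$.

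The main obstacle is the first step: extracting integer-valued information ($c_{\Phi_{t}}=1$) from purely analytic control ($\|2^{-\Phi_{t}}-2^{-s}\|_{\infty}\to 0$). The argument depends on the structural splitting $\Phi_{t}(s)=c_{\Phi_{t}}s+\varphi_{t}(s)$ intrinsic to $\mathcal{G}_{\infty}$ and on the asymptotic behaviour of $\varphi_{t}$ at $+\infty$; once this is in place, the rest of the proof is standard functional analysis together with the cited $\mathcal{H}^{2}$ result.
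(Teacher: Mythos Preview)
Your approach is exactly the one the paper indicates: transfer strong continuity from $\mathcal{A}(\C_+)$ to $\mathcal{H}^2$ via the continuous inclusion and then invoke \cite[Theorem 4.4]{noi}. The paper condenses all of this into a single sentence, and you have correctly unpacked what is needed to make that sentence rigorous (boundedness of $C_{\Phi_t}$ on $\mathcal{H}^2$, density of polynomials, uniform norm control).

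One small imprecision: your argument ``evaluating on the positive real axis'' only rules out $c_{\Phi_t}=0$ (since then $2^{-\Phi_t(s)}\to 2^{-a_1^{(t)}}\neq 0$ while $2^{-s}\to 0$). It does \emph{not} distinguish $c_{\Phi_t}=1$ from $c_{\Phi_t}\geq 2$, because for any $c_{\Phi_t}\geq 1$ both $2^{-\Phi_t(s)}$ and $2^{-s}$ tend to $0$ along the positive real axis. Fortunately this does not matter: once $c_{\Phi_t}\geq 1$ you already have $\Phi_t\in\mathcal{G}$, and Gordon--Hedenmalm gives $\|C_{\Phi_t}\|_{\mathcal{H}^2\to\mathcal{H}^2}=1$ uniformly in $t$, which is all your $\varepsilon/3$ argument requires. (If you really want $c_{\Phi_t}=1$, compare the coefficient of $2^{-s}$ in $2^{-\Phi_t}$ and in $2^{-s}$: for $c_{\Phi_t}\geq 2$ the former vanishes, so $\|2^{-\Phi_t}-2^{-s}\|_\infty\geq 1$.) With this adjustment the proof is complete and matches the paper.
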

The following result is a general fact about symbols in $\mathcal{G}_{\mathcal{A}}$ which will be needed in the proof of Theorem \ref{teor:fcont-uniform}.
\begin{proposition}\label{prop:acotado implica cu}
    Let $\Phi$ belong to $\mathcal{G}_{\mathcal{A}}$ with $\Phi(s)=c_{\Phi}s+\varphi(s)$. If $\varphi\in\mathcal{H}^{\infty}$, then $\Phi$ is uniformly continuous in $\C_+$.
\end{proposition}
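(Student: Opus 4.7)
The plan is to exploit the decomposition $\Phi(s) = c_\Phi s + \varphi(s)$ by covering $\C_+$ with two overlapping regions on which uniform continuity of $\Phi$ comes from two distinct sources, and then gluing. Set $K := \|\varphi\|_\infty$. I would work with the strip $R_1 := \{s\in\C_+ : \mathrm{Re}(s) < 2\}$ and the half-plane $R_2 := \C_1$, whose union is all of $\C_+$ and whose overlap contains the strip $\{1 \leq \mathrm{Re}(s) < 2\}$.

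On $R_1$, I would use the hypothesis $\Phi \in \mathcal{G}_{\mathcal{A}}$: for any $s \in R_1$,
\[
\mathrm{Re}(\Phi(s)) = c_\Phi\,\mathrm{Re}(s) + \mathrm{Re}(\varphi(s)) \leq 2c_\Phi + K,
\]
so $R_1 \subseteq A_M$ with $M := 2c_\Phi + K + 1$. By the very definition of $\mathcal{G}_{\mathcal{A}}$, $\Phi$ is uniformly continuous on $A_M$, and hence on $R_1$. On $R_2$, the linear part $s \mapsto c_\Phi s$ is plainly Lipschitz, and since $\varphi \in \mathcal{H}^{\infty}$ is a bounded Dirichlet series on $\C_+$, Theorem \ref{queffelecs} gives $\sigma_u(\varphi) \leq 0$, so Corollary \ref{coro:cont unif} ensures that $\varphi$ is uniformly continuous on $\C_1$. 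Summing, $\Phi$ is uniformly continuous on $R_2$.

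The final step is the standard gluing argument. Given $\varepsilon > 0$, take moduli $\delta_1,\delta_2$ of uniform continuity of $\Phi$ on $R_1$ and $R_2$ respectively, and set $\delta := \min(\delta_1,\delta_2,1)$. For $s_1,s_2\in\C_+$ with $|s_1-s_2|<\delta$, arrange $\mathrm{Re}(s_1)\leq\mathrm{Re}(s_2)$; then either $\mathrm{Re}(s_2)\leq 2$ (so both points lie in $R_1$), or $\mathrm{Re}(s_1)\geq 1$ (so both lie in $R_2$), while the remaining scenario $\mathrm{Re}(s_1)<1<2<\mathrm{Re}(s_2)$ is ruled out by $|\mathrm{Re}(s_1)-\mathrm{Re}(s_2)|<\delta\leq 1$. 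In either admissible case $|\Phi(s_1)-\Phi(s_2)|<\varepsilon$.

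There is no serious obstacle here: the only mildly subtle point is choosing the threshold between the two regions large enough to obtain a genuine overlap (so the gluing argument does not create a gap near $\mathrm{Re}(s)=1$) yet small enough that the containment $R_1\subseteq A_M$ is quantitatively controlled by $K$ and $c_\Phi$. The choice $\mathrm{Re}(s)=1$ and $\mathrm{Re}(s)=2$ achieves both, and the argument applies uniformly whether $c_\Phi=0$ or $c_\Phi\geq 1$.
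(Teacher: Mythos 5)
Your proof is correct and follows essentially the same route as the paper: the strip $\{0<\mathrm{Re}(s)<c\}$ is absorbed into some $A_M$ using the bound $\mathrm{Re}(\Phi(s))\leq c\,c_{\Phi}+\|\varphi\|_{\infty}$, the complementary half-plane is handled by Corollary \ref{coro:cont unif} plus the Lipschitz linear part, and the two overlapping regions are glued (the paper uses the cut-offs $1/2$ and $1/4$ instead of $2$ and $1$, and leaves the gluing implicit, which you spell out).
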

\begin{proof}
Since $\Phi$ is in $\mathcal{G}_{\mathcal{A}}$, then it is uniformly continuous in the sets
\[
A_M=\{s\in\C:0<\text{Re}(\Phi(s))<M\},\quad \forall M>0.
\]
We claim that there exists $M_0>0$ such that
$\Omega_{1/2}=\{s\in\C:0<\text{Re}(s)<1/2\}\subset A_{M_0}$. Indeed, given $s\in\Omega_{1/2}$, we have that $\text{Re}(\Phi(s))\leq c_{\Phi}/2+\|\varphi\|_{\infty}=K$. Taking $M_0=K$ the claim follows. Then, $\Phi$ is uniformly continuous in $A_{M_0}$ and, in particular, in $\Omega_{1/2}$. The function $\Phi$ is also uniformly continuous in $\C_{1/4}$ (Corollary \ref{coro:cont unif}). Hence, we deduce that $\Phi$ is uniformly continuous in the right half-plane $\C_+$. 
\end{proof}

%\begin{proof}
   % Assume that $\{T_t\}$ is strongly continuous in $\mathcal{A}(\C_+)$. That is, $f\circ\Phi_t(s)\to f(s)$ uniformly on $\C_+$ for any $f\in\mathcal{A}(\C_+)$. Take $f(s)=m^{-s}$, with $m\in \N$, $m\geq 2$. Therefore,
%\[
%m^{-\Phi_t(s)}\to m^{-s},\quad t\to0
%\]
%uniformly on $\C_+$. Multiplying both sides by the unbounded function $m^{s}$, we have that
%\begin{equation}\label{eqcb}
%m^{s-\Phi_t(s)}\to1,\quad t\to0
 %\end{equation}
%uniformly on vertical strips of $\C_+$. From the proof of \cite[Theorem 4.4]{noi}, we conclude that $\Phi_t(s)\to s$ uniformly in half-planes $\C_{\varepsilon}$, $\varepsilon>0$. 
%This gives the continuity of $\{\Phi_t\}$.
%\end{proof}
\begin{theorem}\label{teor:fcont-uniform}
Let $\{\Phi_t\}$ be a continuous semigroup of analytic functions in $ \mathcal{G}_{\infty}$ and $T_t=C_{\Phi_t}$, $t\geq0$. Then, the following statements are equivalent:
\begin{enumerate}[a)]
    \item The semigroup $\{T_t\}$ is strongly continuous in $\mathcal{A}(\C_+)$.
   \item $\Phi_t\in\mathcal{G}_{\mathcal{A}}$, $t>0$.
    \item $\varphi_t\in\mathcal{A}(\C_+)$, $t>0$.
\end{enumerate}
\end{theorem}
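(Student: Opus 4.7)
The plan is to prove the cycle $(a) \Rightarrow (b) \Rightarrow (c) \Rightarrow (a)$. The implication $(a) \Rightarrow (b)$ is immediate: strong continuity of $\{T_t\}$ on $\mathcal{A}(\C_+)$ in particular makes every $T_t$ a bounded composition operator on $\mathcal{A}(\C_+)$, and Theorem \ref{thsimal} puts $\Phi_t$ in $\mathcal{G}_{\mathcal{A}}$.

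For $(b) \Rightarrow (c)$, write $\Phi_t(s)=s+\varphi_t(s)$ (recall $c_{\Phi_t}=1$ for continuous semigroups in $\mathcal{G}_\infty$). The first key step is to show $\varphi_t \in \mathcal{H}^\infty$ for \emph{every} $t>0$. By Theorem \ref{convunifsemg}, applied to $\{\Phi_t\}$ viewed as a continuous semigroup in $\mathcal{G}$, we have $\Phi_t \to \mathrm{Id}$ uniformly in $\C_+$ as $t\to 0^+$, so there exists $t_0>0$ with $\|\varphi_u\|_\infty<\infty$ for every $u\in(0,t_0]$. For an arbitrary $t>0$, pick $n\in\N$ with $u:=t/n \leq t_0$. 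Expanding the semigroup law $\Phi_{(k+1)u}=\Phi_u\circ\Phi_{ku}$ by induction yields
\[
\varphi_{nu}(s)=\sum_{k=0}^{n-1}\varphi_u(\Phi_{ku}(s)),\qquad s\in\C_+,
\]
and since each $\Phi_{ku}$ sends $\C_+$ into $\C_+$, one obtains $\|\varphi_t\|_\infty \leq n\|\varphi_{t/n}\|_\infty<\infty$. With $\varphi_t\in\mathcal{H}^\infty$ and $\Phi_t\in\mathcal{G}_{\mathcal{A}}$ in hand, Proposition \ref{prop:acotado implica cu} gives that $\Phi_t$ is uniformly continuous on all of $\C_+$, whence $\varphi_t=\Phi_t-s$ is uniformly continuous on $\C_+$ as well. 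Since $\varphi_t\in\mathcal{D}$ is analytic and uniformly continuous on $\C_+$, Proposition \ref{thprop} concludes $\varphi_t\in\mathcal{A}(\C_+)$.

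For $(c) \Rightarrow (a)$, assume $\varphi_t\in\mathcal{A}(\C_+)$ for every $t>0$. Then $\varphi_t$, and hence $\Phi_t=s+\varphi_t$, is uniformly continuous in $\C_+$, in particular in every set $A_M$, so $\Phi_t\in\mathcal{G}_{\mathcal{A}}$ and $T_t$ is bounded on $\mathcal{A}(\C_+)$ by Theorem \ref{thsimal}. Since composition operators contract the sup-norm, $\|T_t\|\leq 1$ uniformly in $t$. By Theorem \ref{thpropbasa}, Dirichlet polynomials are dense in $\mathcal{A}(\C_+)$, so strong continuity reduces to checking it on the monomials $f(s)=n^{-s}$, $n\in\N$. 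For $n\geq 2$,
\[
\|T_tf-f\|_\infty=\sup_{s\in\C_+}\bigl|n^{-s}\bigl(n^{-\varphi_t(s)}-1\bigr)\bigr|\leq \sup_{s\in\C_+}\bigl|n^{-\varphi_t(s)}-1\bigr|,
\]
which tends to $0$ as $t\to 0^+$ by Theorem \ref{convunifsemg} (uniform convergence of $\varphi_t$ to $0$ on $\C_+$). A standard $\varepsilon/3$-argument using the uniform bound $\|T_t\|\leq 1$ transfers this to every $f\in\mathcal{A}(\C_+)$.

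The main obstacle is the boundedness step within $(b) \Rightarrow (c)$: upgrading $\varphi_t\in\mathcal{H}^\infty$ from the small-time window provided by Theorem \ref{convunifsemg} to \emph{all} $t>0$. The telescoping identity $\varphi_{nu}=\sum_{k=0}^{n-1}\varphi_u\circ\Phi_{ku}$, derived from the semigroup law and the fact that each $\Phi_{ku}$ preserves $\C_+$, is the technical bridge; once this is in place, Propositions \ref{prop:acotado implica cu} and \ref{thprop} do the rest.
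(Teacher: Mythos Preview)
Your proof is correct and follows essentially the same route as the paper's. The only minor difference is in the step establishing strong continuity: the paper argues directly that every $f\in\mathcal{A}(\C_+)$ is uniformly continuous and $\Phi_t\to\mathrm{Id}$ uniformly on $\C_+$ (Theorem \ref{convunifsemg}), so $f\circ\Phi_t\to f$ uniformly for every $f$ at once, bypassing your monomial reduction and $\varepsilon/3$ argument.
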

\begin{proof}
 By Theorem \ref{thsimal}, we have that $a)$ implies $b)$. 

\ 
By the definition of the class $\mathcal{G}_{\mathcal{A}}$, clearly, $c)$ implies $b)$. We now prove the reverse implication. We know that $\Phi_t(s)=s+\varphi_t(s)$. By Theorem \ref{convunifsemg}, there exists a $t_0>0$ such that $\varphi_t\in\mathcal{H}^{\infty}$ for all $t\in[0,t_0]$. By the semigroup structure, we can extend the conclusion to the range $t>0$. Therefore, every function $\Phi_t$ is under the hypothesis of Proposition \ref{prop:acotado implica cu}. Applying this result, we conclude that $\Phi_t$ is uniformly continuous in $\C_+$ for every $t>0$

%Knowing this, we claim that for every $t>0$, there exists $M_0=M_0(t)>0$ such that $\Omega_{1/2}=\{s\in\C:0<\text{Re}(s)<1/2\}\subset A_{M_0,t}$, where \[A_{M_0,t}=\{s\in\C_+:0<\text{Re}(\Phi_t(s))<M_0\}.\]Indeed, given $s\in\Omega_{1/2}$, we have that $\text{Re}(\Phi_t(s))\leq 1/2+\|\varphi_t\|_{\infty}=K$. Taking $M_0=K$ the claim follows. Since the elements in $\mathcal{G}_{\mathcal{A}}$ are uniformly continuous in $A_{M,t}$, for every $M>0$, we conclude that for every $t$ the functions $\Phi_t$ are uniformly continuous in $\Omega_{1/2}$. Since they are also uniformly continuous in $\C_{1/4}$, we deduce that they are all uniformly continuous in the right half-plane. 

\
For the proof of $b)$ implies $a)$ notice that the hypothesis together with Theorem \ref{thsimal} imply that $T_tf=f\circ\Phi_t$ belongs to $\mathcal{A}(\C_+)$ for any $f$ in this space. Theorem \ref{convunifsemg} yields that $\Phi_t(s)\to s$ uniformly in $\C_+$. Furthermore, every Dirichlet series $f$ in $\mathcal{A}(\C_+)$ is uniformly continuous in $\C_+$. Consequently, we conclude that
$f\circ\Phi_t\to f$ uniformly in $\C_+$ for every $f\in\mathcal{A}(\C_+)$.
\end{proof}
There exists an analogue version of Theorem \ref{teor:fcont-uniform} in the classical setting of the unit disc algebra $A(\D)$ (see \cite[Theorem 1.2]{Contreras-Diaz1}).

\begin{remark}
It is said that $\{T_t\}$ is uniformly continuous if and only if $T_{t}$ converges, as $t$ goes to $0$, to the identity map in the norm of the space of linear bounded operators in $X$.  Clearly, every uniformly continuous semigroup is strongly continuous. Using essentially the same argument as in \cite[Theorem 5.8]{noi} as well as the fact that the multiplication operator $f\mapsto n^{-s}f$ is an isometry in $\mathcal{H}^{\infty}$, we can conclude that the only uniformly continuous semigroup of composition operators in $\mathcal{A}(\C_+)$ is the trivial one, namely, $T_t=\text{Id}$ for every $t$.
\end{remark}
\subsection{The Koenigs function of semigroups in $\mathcal{G}_{\mathcal{A}}$}
The next pages are devoted to the study of some properties of the Koenigs function of continuous semigroups in the class $\mathcal{G}_{\mathcal{A}}$. The same thing was done previously for the class $\mathcal{G}$ in \cite{noi}. In fact, using some of the properties established there, we are able to construct continuous semigroups of analytic functions in $\mathcal{G}_{\infty}$ but not in $\mathcal{G}_{\mathcal{A}}$. The reader interested in reading about this topic in the more classical setting of continuous semigroups in the unit disc $\D$ is addressed to the book \cite[Chapter 9]{manoloal}.

\ 
Before moving ahead, we provide some definitions that will be needed later.
\begin{definition}
    Let $\lambda\in\C$ be such that $\text{Re}(\lambda)\geq0$ and $\Omega$ a domain in $\C$ such that $0\in\Omega$. 
    \begin{itemize}
        \item[a)] We say that $\Omega$ is $\lambda$-spirallike if $e^{-\lambda t}\Omega\subset\Omega$ for all $t\geq0$.
        \item[b)] If $\lambda\in(0,\infty)$, we say that $\Omega$ is starlike
        \item[c)] A map $h\colon\D\to\C$ is $\lambda$-spirallike with respect to $\tau\in\D$ if it is univalent, $h(\tau)=0$ and $h(\D)$ is a $\lambda$-spirallike domain.
        \item[d)] If a map $h\colon\D\to\C$ is $\lambda$-spirallike with $\lambda\in(0,\infty)$, it is also called starlike.
    \end{itemize}    
\end{definition}
Next Theorem is proven in the unit disc setting in \cite[Theorem 9.3.5]{manoloal}, but it holds in any simply connected domain $\Omega$.
\begin{theorem}\label{thkoenigsa}
Let $\{\Phi_t\}$ be a non-elliptic continuous semigroup of analytic functions in $\C_+$. Then, there exists a  univalent function $h:\C_+\to\C$ such that
\begin{equation}\label{modeloa}
    h\circ\Phi_t(s)=h(s)+t.
\end{equation}
The function $h$ is unique up to an additive constant. 
\end{theorem}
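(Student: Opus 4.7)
The plan is to transfer the known disc version of the Koenigs theorem to $\C_+$ via a conformal equivalence. Let $T:\D\to\C_+$ be the Cayley map $T(z)=(1+z)/(1-z)$, and define $\phi_t := T^{-1}\circ\Phi_t\circ T$. Then $\{\phi_t\}$ is a continuous semigroup of analytic self-maps of $\D$; continuity on compact subsets of $\D$ follows from continuity of $\{\Phi_t\}$ on compact subsets of $\C_+$, since $T$ is a homeomorphism. Because $T$ is a bijection, $\{\phi_t\}$ has a fixed point in $\D$ if and only if $\{\Phi_t\}$ has one in $\C_+$, so the non-elliptic hypothesis on $\{\Phi_t\}$ translates directly into non-ellipticity of $\{\phi_t\}$ on $\D$.

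Apply the disc version (\cite[Theorem 9.3.5]{manoloal}) to $\{\phi_t\}$ to obtain a univalent function $\tilde h:\D\to\C$ (unique up to an additive constant) with $\tilde h\circ\phi_t=\tilde h+t$ for every $t\geq 0$. Set $h:=\tilde h\circ T^{-1}:\C_+\to\C$. Since $T^{-1}$ and $\tilde h$ are univalent, so is $h$, and a direct computation yields
\[
h\circ\Phi_t \;=\; \tilde h\circ T^{-1}\circ\Phi_t \;=\; \tilde h\circ\phi_t\circ T^{-1} \;=\; (\tilde h+t)\circ T^{-1} \;=\; h+t,
\]
which is exactly \eqref{modeloa}.

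For uniqueness, suppose $h_1,h_2:\C_+\to\C$ are two univalent solutions of \eqref{modeloa}. Then $F:=h_2-h_1$ is holomorphic on $\C_+$ and satisfies $F\circ\Phi_t = F$ for every $t\geq 0$. Differentiating in $t$ at $t=0$ and using \eqref{stress}, we obtain $F'(s)\,H(s)=0$ on $\C_+$, where $H$ is the infinitesimal generator of $\{\Phi_t\}$. Since $\{\Phi_t\}$ is non-trivial, $H\not\equiv 0$, and therefore $F'\equiv 0$ by the identity principle, so $F$ is constant.

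The only non-routine point is verifying that the non-elliptic character and the continuity of the semigroup truly pass through the biholomorphism $T$, but both are essentially automatic; the rest is simply organising the conjugation and invoking the disc-theoretic statement together with the existence of the infinitesimal generator already recorded in Section~\ref{sec:semanfunc}.
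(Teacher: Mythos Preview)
Your proposal is correct and is exactly the approach the paper intends: the paper does not give a proof at all but simply remarks that the theorem is proven in the unit disc in \cite[Theorem 9.3.5]{manoloal} and ``holds in any simply connected domain $\Omega$,'' which is precisely the conformal-transfer argument you have written out in detail. Your uniqueness argument via the infinitesimal generator is a clean addition that the paper leaves implicit.
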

The function $h$ is known as the \emph{Koenigs function} of the semigroup. The interest about studying properties of the Koenigs function $h$ lies on the useful information it can provide about the semigroup. In fact, since the  function $h$ is univalent, we can recover the semigroup $\{\Phi_t\}$ as
\begin{equation*}
    \Phi_t(s)=h^{-1}(h(s)+t).
\end{equation*}
Now, a derivation with respect to $t$ and evaluating at $t=0$ in \eqref{modeloa} gives that
\begin{equation*}
    h'(s)=\frac{1}{H(s)},
\end{equation*}
where $H$ is the infinitesimal generator of the continuous semigroup $\{\Phi_t\}$. Therefore, the infinitesimal generator of a continuous semigroup $\{\Phi_t\}$ and its Koenigs function are linked via the latter identity.

\ 
In the setting of continuous semigroups in the class $\mathcal{G}$, there exists a full description of the Koenigs functions of such semigroups given in \cite[Corollary 6.8]{noi}. We recall it straightaway.
\begin{theorem}\label{th:Koenigs}
Let $h$ be a holomorphic function in $\C_{+}$. Then, $h$ is the Koenigs function of a continuous semigroup in $\mathcal G$ if and only if $h'$ is a Dirichlet series satisfying $h'(\C_{+})\subset \overline {\C_{+}}\setminus \{0\}$. 
\end{theorem}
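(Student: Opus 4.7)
The plan is to prove both directions of the equivalence through the Berkson-Porta identity $h'(s)=1/H(s)$ relating a Koenigs function $h$ to the infinitesimal generator $H$ of the associated semigroup, combined with Theorem \ref{gcoro} (describing such generators) and Theorem \ref{queffelecs} (promoting Dirichlet series convergence from a large half-plane to the whole of $\C_{+}$). The central auxiliary fact I will establish is: \emph{if $F\in\mathcal{D}$ satisfies $F(\C_{+})\subset\overline{\C_{+}}$ and $F\not\equiv 0$, then its leading Dirichlet coefficient $a_{1}$ is nonzero.} Indeed, if $a_{1}=0$ and $n_{0}\geq 2$ is the smallest index with $a_{n_{0}}\neq 0$, then, as $\sigma\to+\infty$,
\begin{equation*}
n_{0}^{\sigma}F(\sigma+it)\longrightarrow a_{n_{0}}e^{-it\log n_{0}},
\end{equation*}
whose real part $|a_{n_{0}}|\cos(\arg a_{n_{0}}-t\log n_{0})$ attains negative values as $t$ ranges over $\R$, contradicting $\Re F\geq 0$.

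For the forward direction, assume $h$ is the Koenigs function of a continuous semigroup $\{\Phi_{t}\}\subset\mathcal{G}$ with infinitesimal generator $H$. By Theorem \ref{gcoro}, $H\in\mathcal{D}$ with $H(\C_{+})\subset\overline{\C_{+}}$; when $H$ is non-constant the open mapping theorem sharpens this to $H(\C_{+})\subset\C_{+}$, so $h'=1/H$ is analytic in $\C_{+}$ with $h'(\C_{+})\subset\C_{+}\subset\overline{\C_{+}}\setminus\{0\}$. The auxiliary fact applied to $H$ gives $a_{1}^{H}\neq 0$, whence the formal reciprocal $1/H$ converges absolutely as a Dirichlet series in some half-plane $\C_{\nu}$ (for $\nu$ large enough that $|H-a_{1}^{H}|<|a_{1}^{H}|/2$). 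Since $h':\C_{0}\to\C_{0}$, Theorem \ref{queffelecs} then yields $\sigma_{u}(h')\leq 0$, so $h'\in\mathcal{D}$. The degenerate constant cases ($H\equiv 0$, trivial semigroup; $H\equiv ic$ with $c\in\R\setminus\{0\}$, translation semigroup) are immediate.

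For the backward direction, suppose $h'\in\mathcal{D}$ and $h'(\C_{+})\subset\overline{\C_{+}}\setminus\{0\}$. Define $H:=1/h'$, analytic in $\C_{+}$ since $h'$ never vanishes. When $h'$ is non-constant, open mapping places $h'(\C_{+})$ in the interior of $\overline{\C_{+}}\setminus\{0\}$, namely $\C_{+}$, so $H(\C_{+})\subset\C_{+}$. Applying the auxiliary fact to $h'$ gives $a_{1}^{h'}\neq 0$, so $H$ has a Dirichlet series expansion in some $\C_{\nu}$; Theorem \ref{queffelecs} upgrades this to $H\in\mathcal{D}$. Then Theorem \ref{gcoro} produces a continuous semigroup $\{\Phi_{t}\}\subset\mathcal{G}$ with infinitesimal generator $H$. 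Its Koenigs function $\tilde h$ satisfies $\tilde h'=1/H=h'$, so $\tilde h-h$ is constant; by Theorem \ref{thkoenigsa} the Koenigs function is only determined up to an additive constant, so $h$ itself is a Koenigs function of $\{\Phi_{t}\}$ (in particular it inherits univalence from $\tilde h$). The constant case $h'\equiv c\in\overline{\C_{+}}\setminus\{0\}$ is realised by the translation semigroup $\Phi_{t}(s)=s+t/c$.

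The principal obstacle is the auxiliary positivity-based claim: this is the one place where the range hypothesis on $\overline{\C_{+}}$ is really used, and it is what makes the formal reciprocation of the Dirichlet series go through. Everything else is essentially bookkeeping through the Berkson-Porta dictionary, followed by a single invocation of Theorem \ref{queffelecs} in each direction to pass from Dirichlet series convergence in a far-right half-plane to convergence in all of $\C_{+}$.
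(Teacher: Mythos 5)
Your argument is correct. Note, however, that this paper does not actually prove Theorem \ref{th:Koenigs}: it is recalled verbatim from \cite[Corollary 6.8]{noi}, so there is no in-paper proof to match your proposal against. What you have written is a legitimate self-contained derivation from the ingredients the paper does state: the identity $h'=1/H$ obtained by differentiating $h\circ\Phi_t=h+t$, the characterisation of generators in Theorem \ref{gcoro}, the uniqueness up to additive constants in Theorem \ref{thkoenigsa}, and Theorem \ref{queffelecs} to push convergence of the Dirichlet series from a far-right half-plane to all of $\C_+$. Your auxiliary lemma (a somewhere-convergent Dirichlet series with $\Re F\geq 0$ on $\C_+$ and $F\not\equiv 0$ has nonzero leading coefficient) is proved correctly and is indeed the crux: it is what legitimises inverting $H$ (respectively $h'$) as a Dirichlet series, and it is exactly where the range condition $\overline{\C_+}\setminus\{0\}$ enters. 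Two small polish points, neither a gap: when you invert, it is cleaner to choose $\nu$ beyond the abscissa of absolute convergence with $\sum_{n\geq 2}|a_n/a_1|\,n^{-\nu}<1$ and run the Neumann series in the Banach algebra of absolutely convergent Dirichlet series on $\C_\nu$, rather than quoting only the pointwise bound $|H-a_1^H|<|a_1^H|/2$ (with just a sup bound one must still argue that the uniform limit of the partial sums is a Dirichlet series); and in the forward direction the hypothesis that $h$ \emph{is} a Koenigs function already excludes the trivial semigroup (the relation $h\circ\Phi_t=h+t$ is incompatible with $\Phi_t=\mathrm{id}$), so the case $H\equiv 0$ never arises, while the constant cases $H\equiv c$ with $\Re c\geq 0$, $c\neq 0$, are handled exactly as you indicate.
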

Take $\{\Phi_t\}$ a continuous semigroup in $\mathcal{G}_{\infty}$. If for certain $b\in\R$ the limit
    \[
    \lim_{s\to ib}\Phi_t(s)
    \]
happens to exist, then it is finite. Indeed, thanks to Theorem \ref{convunifsemg}, the elements of the semigroup are of the form $\Phi_t(s)=s+\varphi_t(s)$, where $\varphi_t\in\mathcal{H}^{\infty}$. Therefore, if the limit of $\Phi_t$ at $\partial\C_+$ exists, it is necessarily finite.
    
\begin{theorem}\label{koenigs}
Let $\{\Phi_t\}$ be a continuous semigroup in $\mathcal{G}_{\infty}$ and $h$ be its Koenigs function. Then, the following assertions are equivalent:
\begin{enumerate}[a)]
   \item For every $t>0$ and for every $b\in\R$, the limit $\lim_{s\to ib}\Phi_t(s)$ exists.
   \item There exists $t_0>0$ such that for every $b\in\R$ the limit $\lim_{s\to ib}\Phi_{t_0}(s)$ exists.
  \item  For every $\beta\in \partial\C_+\cup\{\infty\}$, the limit $\lim_{s\to \beta}h(s)$ exists in the Riemann sphere. Equivalently, $h$ has a continuous extension from $\overline{\C_+}\cup\{\infty\}$ to the Riemann sphere.
   \item $\Phi_t$ is uniformly continuous on $C_{R}=\{z\in\C_+:|\emph{Im}(z)|<R\}$, for every $R>0$ and $t>0$. 
 \end{enumerate}
\end{theorem}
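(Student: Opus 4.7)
Our plan is to establish the loop $(a)\Rightarrow(b)\Rightarrow(c)\Rightarrow(d)\Rightarrow(a)$. The first and last links are quick: $(a)\Rightarrow(b)$ is trivial, and $(d)\Rightarrow(a)$ follows from the observation that uniform continuity of $\Phi_t$ on $C_R$ forces $\{\Phi_t(s_n)\}$ to be Cauchy whenever $\{s_n\}\subset C_R$ is Cauchy; in particular, when $s_n\to ib$ with $|b|<R$, the sequence $\{\Phi_t(s_n)\}$ converges, so the limit $\lim_{s\to ib}\Phi_t(s)$ exists.

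For $(b)\Rightarrow(c)$, the main idea is to exploit the Koenigs identity in the form $h(s)=h(\Phi_{t_0}(s))-t_0$. Fix $b\in\R$ and put $w(b):=\lim_{s\to ib}\Phi_{t_0}(s)$, which exists by hypothesis. If $w(b)\in\C_+$, continuity of $h$ on $\C_+$ yields $\lim_{s\to ib}h(s)=h(w(b))-t_0$. If instead $w(b)\in i\R$, set $b_0=b$ and iteratively define $ib_n:=w(b_{n-1})$. If at some stage $w(b_{n-1})\in\C_+$, we finish by applying the same argument to $h(s)=h(\Phi_{nt_0}(s))-nt_0$. Otherwise the Denjoy--Wolff property forces $|b_n|\to\infty$; to close this case we invoke the general fact that a Koenigs function of a semigroup in $\mathcal{G}_{\infty}$ whose Denjoy--Wolff point is $\infty$ satisfies $\lim_{s\to\infty,\,s\in\C_+}h(s)=\infty$ in the Riemann sphere. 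This same fact covers the case $\beta=\infty$ of (c): it follows from the univalence of $h$, the inclusion $h(\C_+)+[0,\infty)\subset h(\C_+)$, and the escape property \eqref{DW} of the orbits.

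For $(c)\Rightarrow(d)$, we use $\Phi_t(s)=h^{-1}(h(s)+t)$. By (c), $h$ extends to a continuous map from $\overline{\C_+}\cup\{\infty\}$ into the Riemann sphere. Moreover, $h(\C_+)$ is a simply connected domain invariant under positive real translations, whose boundary should be locally connected as an analytic Jordan-type arc tending to $\infty$ in both directions. Carath\'eodory's boundary correspondence theorem then gives a continuous extension of $h^{-1}$ from $\overline{h(\C_+)}$ into $\overline{\C_+}\cup\{\infty\}$, and composing $h$, translation by $t$, and $h^{-1}$ yields a continuous extension of $\Phi_t$ to $\overline{\C_+}$. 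Restricting this extension to $\overline{C_R}\cap\{\mathrm{Re}(s)\leq 2\}$, which is compact, gives uniform continuity of $\Phi_t$ there; combining with the uniform continuity of $\Phi_t$ on $\C_1$ provided by Corollary \ref{coro:cont unif}, we obtain uniform continuity on all of $C_R$.

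The main obstacle I anticipate is the boundary-to-boundary iteration in step $(b)\Rightarrow(c)$, where the sequence $\{ib_n\}$ remains on the imaginary axis and escapes to infinity; closing that case requires matching the Denjoy--Wolff escape rate with the asymptotic behaviour of $h$ at $\infty$. A secondary hurdle is the continuous extension of $h^{-1}$ needed in $(c)\Rightarrow(d)$, which rests on a detailed understanding of the boundary of the translation-invariant domain $h(\C_+)$ and probably on establishing its local connectedness in the Riemann sphere.
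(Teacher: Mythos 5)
Your outer links are fine: $(a)\Rightarrow(b)$ is indeed trivial and your argument for $(d)\Leftrightarrow(a)$ matches the paper's. The two middle implications, however, have genuine gaps. In $(b)\Rightarrow(c)$ your iteration $ib_n:=\lim_{s\to ib_{n-1}}\Phi_{t_0}(s)$ can stay on the imaginary axis forever without escaping: the claim that ``the Denjoy--Wolff property forces $|b_n|\to\infty$'' is unjustified and in fact false, because a continuous semigroup in $\mathcal{G}_{\infty}$ may have boundary fixed points on $i\R$ (take the generator $H(s)=1-2^{-s}$, which is a Dirichlet polynomial mapping $\C_+$ into $\C_+$ and vanishes at $s=0$ and $s=2k\pi i/\log 2$; the associated semigroup fixes these points, so $b_n\equiv b_0$ there). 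Moreover, even when $|b_n|\to\infty$, your closing step does not close: for each fixed $n$ the identity $h(s)=h(\Phi_{nt_0}(s))-nt_0$ only refers to the behaviour of $h$ near the \emph{finite} boundary point $ib_n$, so the fact that $\lim_{s\to\infty}h(s)=\infty$ is never applicable and the chain of transfers never terminates. The paper avoids all of this by conjugating with $L(z)=\frac{1+z}{1-z}$: hypothesis $(b)$ plus $\Phi_{t_0}(s)=s+\varphi_{t_0}(s)$, $\varphi_{t_0}\in\mathcal{H}^\infty$, shows $\Psi_{t_0}=L^{-1}\circ\Phi_{t_0}\circ L\in A(\D)$, and then the disc theorem \cite[Theorem 11.3.8]{manoloal} gives the unrestricted boundary limits of $h_\D=h\circ L$ on $\partial\D\setminus\{1\}$, while the limit $\infty$ at the Denjoy--Wolff point is obtained from the zero hyperbolic step of the semigroup (\cite[Proposition 6.10]{noi} together with \cite[Theorem 11.1.4]{manoloal}) --- a nontrivial input that your sketch of ``univalence $+$ translation invariance $+$ escape of orbits'' does not replace.

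The implication $(c)\Rightarrow(d)$ has a similar problem: Carath\'eodory's theorem applied to the locally connected boundary of $h(\C_+)$ yields continuity of the extension of $h$, \emph{not} of $h^{-1}$. The domain $h(\C_+)$ is typically a slit-type domain (indeed, in the paper's own examples the Koenigs image is $\C_+$ minus a union of horizontal slits), and over a slit point there are two distinct prime ends, so $h^{-1}$ admits no continuous extension to $\overline{h(\C_+)}$; your proposed factorisation of the extension of $\Phi_t$ through a continuous extension of $h^{-1}$ therefore breaks down exactly in the cases of interest. Continuity of the composition $s\mapsto h^{-1}(h(s)+t)$ up to $\partial\C_+$ is true but requires the prime-end/boundary analysis carried out in \cite[Theorem 11.3.8, $a)\Rightarrow b)$]{manoloal}, which is how the paper deduces $\Psi_t\in A(\D)$ for all $t$ and hence the existence of the limits $\lim_{s\to ib}\Phi_t(s)$; uniform continuity on $C_R$ then follows by the compactness argument you also use.
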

\begin{proof}
$a)$ implies $b)$ is trivial. For $b)$ implies $c)$, consider $L:\D\to\C_+$ the Riemann mapping $L(z)=\frac{1+z}{1-z}$. Then, define the analytic functions in the unit disc $\D$ given by $\Psi_{t}(s)=L^{-1}\circ\Phi_{t}\circ L(z)$, $z\in\D$. Clearly, $\{\Psi_t\}$ is a continuous semigroup in $\D$. In fact, its Koenigs function is $h_{\D}=h\circ L$, where $h$ stands for the Koenigs function of the semigroup $\{\Phi_t\}$. Moreover, its Denjoy-Wolff point is $1$. Since $L:\overline{\D}\to\overline{\C_+}$ is a homeomorphism, to prove $c)$ it suffices to check that the limit $\lim_{z\to\tau}h_{\D}(z)$ exists in the Riemann sphere for every $\tau\in\partial\D$. Thanks to $b)$, the function $\Psi_{t_0}:\D\to\D$ has limit at every point $\overline{\D}\setminus\{1\}$. However, $\Phi_{t_0}$ also has a limit at the point $z=1$, since $\Phi_{t_0}\in\mathcal{G}_{\infty}$, so $\Phi_{t_0}(s)=s+\varphi_{t_0}(s)$, with $\varphi_{t_0}\in\mathcal{H}^{\infty}$. Therefore,
$\lim_{s\to\infty}\Phi_{t_0}(s)=\infty$, and then,
\[
\lim_{z\to1}\Psi_{t_0}(z)=1.
\]
Then, we have proven that $\Psi_{t_0}$ belongs to the unit disc algebra $A(\D)$. Therefore, by \cite[Theorem 11.3.8, $c)\Rightarrow a)$]{manoloal}, we conclude that the unrestricted limit of $h_{\D}$ exists at every point of $\partial\D\setminus\{1\}$. By the definition of $h_{\D}$, we have that $h$ has a continuous extension to the imaginary axis $i\R$. It remains to prove the existence of its extension to the Denjoy-Wolff point of the semigroup $\{\Phi_t\}$. If the semigroup $\{\Phi_t\}$ consists of automorphisms, the result trivially holds. Therefore, assume that they are not automorphisms. Now, by \cite[Proposition 6.10]{noi}, we know that the semigroup $\{\Phi_t\}$ has zero-hyperbolic step. Thanks to the isometric nature of the hyperbolic distance for biholomorphisms, see \cite[Proposition 1.3.10]{manoloal}, we conclude that the semigroup $\{\Psi_t\}$ also has zero hyperbolic step. An application of Theorem \cite[Theorem 11.1.4, 3) iii)]{manoloal} yields
\[
\lim_{z\to1}h_{\D}(z)=\infty.
\]
This guarantees that the Koenigs function $h_{\D}$ of the semigroup $\{\Psi_t\}$ has unrestricted limit at the Denjoy-Wolff point $\tau=1$. Since $h=h_{\D}\circ L^{-1}$, we conclude that
\[
\lim_{s\to\infty}h(s)=\lim_{s\to\infty}h_{\D}\circ L^{-1}(s)=\infty.
\]
For $c)$ implies $a)$, we define the semigroup $\Psi_t$ analogously and we let $h_{\D}$ stand for its Koenigs function. Thanks to $c)$, we have that $h_{\D}$ has unrestricted limit at $\D\setminus\{1\}$, and the Denjoy-Wolff of the semigroup is $1$. Using  \cite[Theorem 11.3.8, $a)\Rightarrow b)$]{manoloal}, we conclude that $\Psi_t$ belongs to the disc algebra $A(\D)$ for every $t>0$. Since, $\Phi_t(s)=L\circ\Psi_t\circ L^{-1}(s)$, we have that for every $b\in\R$, the limit $\lim_{s\to ib}\Phi_t(s)$ exists. 

\ 
If we assume $d)$ then, for any $b\in\R$, there exists $R>0$ big enough so that $\varepsilon+ib$,  belongs to $C_R$ for every $\varepsilon>0$. Since $\Phi_t$ is uniformly continuous in $C_R$, we can extend it to $\overline{C}_R$ and, in particular, to the point $ib$ and $a)$ follows. Reciprocally, suppose that $a)$ holds. Take $R>0$. Then, $\Phi_t$ can be extended to $\overline{C}_R$, by hypothesis. If we restrict ourselves to those points in $\overline{C}_R$ with real part less or equal than $1/2$, we have that, by compactness, $\Phi_t$ is uniformly continuous there. The uniform continuity of $\Phi_t$ in every half-plane $\C_{\varepsilon}$, $\varepsilon>0$ yields the result.
\end{proof}
\begin{remark}
    Notice that if $\{\Phi_t\}$ is a continuous semigroup in the class $\mathcal{G}_{\infty}$ such that there exists $t_0>0$ satisfying $\Phi_{t_0}\in\mathcal{G}_{\mathcal{A}}$, then all the conditions from Theorem \ref{koenigs} are satisfied. 
\end{remark}
In the next Corollary we show that, in analogy with case of continuous semigroups of holomorphic self maps of the unit disc (\cite[Theorem 11.3.8]{manoloal}), for the case of a certain class of continuous semigroups in $\mathcal{G}_{\infty}$, it suffices that an element of the semigroup belongs to $\mathcal{G}_{\mathcal{A}}$ to conclude that every function of the semigroup is in the class $\mathcal{G}_{\mathcal{A}}$.
\begin{corollary}\label{coro1}
  Let $\{\Phi_t\}$ be a continuous semigroup in $\mathcal{G}_{\infty}$. Assume the existence of $t_0>0$ such that $\Phi_{t_0}\in\mathcal{G}_{\mathcal{A}}$ and $t_1>0$ such that $\Phi_{t_1}(s)=s+g(k^{-s})$, with $k\in\N$, $k\geq2$. Then, $\varphi_{t_1}\in\mathcal{A}(\C_+)$. In particular, $\Phi_{t_1}\in\mathcal{G}_{\mathcal{A}}$
\end{corollary}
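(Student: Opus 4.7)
I would combine Theorem \ref{koenigs} with the periodicity forced by the hypothesis $\Phi_{t_1}(s)=s+g(k^{-s})$. Since $\Phi_{t_0}\in\mathcal{G}_{\mathcal{A}}$, the remark following Theorem \ref{koenigs} tells us that all four equivalent conditions of that theorem hold for the whole semigroup. Applying condition $d)$ at $t=t_1$ gives that $\Phi_{t_1}$ is uniformly continuous on every horizontal strip $C_R=\{s\in\C_+:|\text{Im}(s)|<R\}$, $R>0$; since $\{\Phi_t\}$ lives in $\mathcal{G}_{\infty}$ and therefore has characteristic one, $\Phi_{t_1}(s)=s+\varphi_{t_1}(s)$, so $\varphi_{t_1}(s)=g(k^{-s})$ is also uniformly continuous on every such strip.

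The next step is to exploit the fact that $k^{-s}$, and therefore $\varphi_{t_1}$, is periodic with period $T:=2\pi i/\log k$, in order to upgrade uniform continuity on one strip $C_R$ (with $R=2\pi/\log k$) to uniform continuity on all of $\C_+$. Given $\varepsilon>0$, pick $\delta_0>0$ coming from uniform continuity of $\varphi_{t_1}$ on $C_R$ at scale $\varepsilon$, and set $\delta:=\min\{\delta_0,\pi/\log k\}$. For $s_1,s_2\in\C_+$ with $|s_1-s_2|<\delta$, choose $n\in\Z$ so that $\text{Im}(s_1-nT)\in[-\pi/\log k,\pi/\log k)$; the cap on $\delta$ forces $s_2-nT$ to lie in $C_R$ as well, and periodicity yields
\[
|\varphi_{t_1}(s_1)-\varphi_{t_1}(s_2)|=|\varphi_{t_1}(s_1-nT)-\varphi_{t_1}(s_2-nT)|<\varepsilon.
\]
Hence $\varphi_{t_1}$ is uniformly continuous on $\C_+$.

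Finally, $\varphi_{t_1}$ is analytic in $\C_+$ and lies in $\mathcal{D}$ (because $\Phi_{t_1}\in\mathcal{G}_{\infty}$), so Proposition \ref{thprop} delivers $\varphi_{t_1}\in\mathcal{A}(\C_+)$. Consequently $\Phi_{t_1}(s)=s+\varphi_{t_1}(s)$ is uniformly continuous on the whole right half-plane, hence on every $A_M$, and therefore $\Phi_{t_1}\in\mathcal{G}_{\mathcal{A}}$. I do not foresee any real obstacle: the argument is a clean assembly of Theorem \ref{koenigs} and Proposition \ref{thprop}, with the only mildly technical point being the bookkeeping of the period-translation step, where one must cap $\delta$ by $\pi/\log k$ to prevent a pair of close points from straddling the strip $C_R$.
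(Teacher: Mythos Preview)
Your proof is correct and follows essentially the same route as the paper's: invoke the remark after Theorem~\ref{koenigs} to trigger condition~$d)$, then use the imaginary periodicity of $\varphi_{t_1}(s)=g(k^{-s})$ to pass from uniform continuity on a single horizontal strip to uniform continuity on all of $\C_+$. Your treatment is in fact more explicit than the paper's (you spell out the translation step and the final appeal to Proposition~\ref{thprop}, whereas the paper is terse and writes ``$\varphi_t$'' where it really means $\varphi_{t_1}$), but the underlying argument is the same.
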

\begin{proof}
    If there exists $t_0>0$ such that $\Phi_{t_0}$ is in $\mathcal{G}_{\mathcal{A}}$, the conditions from Theorem \ref{koenigs} are all satisfied. Hence, we have that for every $t$, $\Phi_t$ is uniformly continuous on the closure of $C_R\cap\{0<\text{Re}(z)< 1/2\}$, for every $R>0$ and the same occurs for $\varphi_t$. Since $\varphi_t$ is periodic (with an imaginary period), we conclude the uniform continuity of $\varphi_t$ in the whole right half-plane and the claim is proven. 
\end{proof}
\begin{corollary}\label{coro2}
    Let $\{\Phi_t\}$ be a continuous semigroup in $\mathcal{G}_{\infty}$ and $h$ its Koenigs function.  Assume that $\varphi_t(s)=g_t(k^{-s})$, $g_t:\D\to\C_+$ holomorphic, $k\in\N$, $k\geq2$ and that $h$ is uniformly continuous in $\C_+$. Then, $\Phi_t\in\mathcal{G}_{\mathcal{A}}$ for all $t$.
\end{corollary}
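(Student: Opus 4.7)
The plan is to verify condition $(c)$ of Theorem \ref{koenigs}, feed that into the implication $(c)\Rightarrow(d)$ to obtain uniform continuity of $\Phi_t$ on each vertical strip $C_R$, and then upgrade this to uniform continuity of $\varphi_t$ on the whole of $\C_+$ by exploiting the $iT$-periodicity forced by the representation $\varphi_t(s)=g_t(k^{-s})$ with $T=2\pi/\log k$. Once $\Phi_t$ is uniformly continuous on $\C_+$, the membership $\Phi_t\in\mathcal{G}_{\mathcal{A}}$ is automatic since $\Phi_t\in\mathcal{G}_\infty$ by hypothesis.

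First I would show that the Koenigs function $h$ extends continuously from $\overline{\C_+}\cup\{\infty\}$ to the Riemann sphere. The assumed uniform continuity of $h$ on $\C_+$ immediately yields a finite continuous extension to $i\R$, so only the limit at $\infty$ requires work. To handle it, I would transfer the semigroup to the unit disc via the Cayley map $L(z)=(1+z)/(1-z)$, consider $\Psi_t=L^{-1}\circ\Phi_t\circ L$ together with its Koenigs function $h_\D=h\circ L$, and reproduce the argument in the proof of Theorem \ref{koenigs}: the semigroup $\{\Phi_t\}\subset\mathcal{G}_\infty$ has zero hyperbolic step by \cite[Proposition 6.10]{noi} (the automorphism case being trivial), and then \cite[Theorem 11.1.4]{manoloal} gives $\lim_{z\to 1}h_\D(z)=\infty$, that is, $\lim_{s\to\infty}h(s)=\infty$. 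This establishes condition $(c)$ of Theorem \ref{koenigs}, and the implication $(c)\Rightarrow(d)$ then yields uniform continuity of $\Phi_t$, and hence of $\varphi_t=\Phi_t-\mathrm{id}$, on $C_R=\{z\in\C_+:|\mathrm{Im}(z)|<R\}$ for every $R>0$ and every $t>0$.

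The final step is to turn strip-wise uniform continuity into global uniform continuity of $\varphi_t$, which is where the periodicity enters. Given $\varepsilon>0$, fix $R>T$ and choose $\delta\in(0,R-T)$ from the uniform continuity of $\varphi_t$ on $C_R$. For any $s_1,s_2\in\C_+$ with $|s_1-s_2|<\delta$, the intervals $(\mathrm{Im}(s_j)-R,\mathrm{Im}(s_j)+R)$ overlap in a set of length bigger than $T$, so there exists $n\in\Z$ with $s_1-inT,\,s_2-inT\in C_R$; periodicity of $\varphi_t$ then gives $|\varphi_t(s_1)-\varphi_t(s_2)|<\varepsilon$. Adding the identity preserves uniform continuity, so $\Phi_t$ is uniformly continuous on $\C_+$, and in particular on every $A_M$.

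The main obstacle is step one, the extension of $h$ to the point at infinity: this is not a consequence of the uniform continuity of $h$ on $\C_+$ alone and relies on the zero hyperbolic step property of continuous semigroups in $\mathcal{G}_\infty$, a nontrivial input imported from \cite{noi}. The other two steps are bookkeeping once $(c)\Leftrightarrow(d)$ of Theorem \ref{koenigs} is available.
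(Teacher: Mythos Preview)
Your proposal is correct and follows the same route as the paper: verify condition~(c) of Theorem~\ref{koenigs}, pass to~(d) to obtain uniform continuity of $\Phi_t$ on each $C_R$, then use the $2\pi i/\log k$-periodicity of $\varphi_t$ to globalise.

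One remark: the step you flag as ``the main obstacle'' --- the limit of $h$ at $\infty$ --- is in fact unnecessary. If you inspect the proof of Theorem~\ref{koenigs}, the chain $(c)\Rightarrow(a)\Rightarrow(d)$ only uses the continuous extension of $h$ to $i\R$ (equivalently, of $h_\D$ to $\partial\D\setminus\{1\}$); the limit at the Denjoy--Wolff point is needed solely for the converse direction $(b)\Rightarrow(c)$. Since uniform continuity of $h$ on $\C_+$ already gives the extension to $i\R$, the paper's one-line invocation of Theorem~\ref{koenigs} is justified without the zero-hyperbolic-step argument. Your detour through \cite[Proposition~6.10]{noi} and \cite[Theorem~11.1.4]{manoloal} is correct but superfluous.
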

\begin{proof}
    Since $h$ is uniformly continuous in $\C_+$, by Theorem \ref{koenigs}, every $\Phi_t(s)=s+\varphi_t(s) $ is uniformly continuous in any horizontal strip of $\C_+$ of width $R>0$. Then, by the periodicity of $\varphi_t$, every symbol $\Phi_t$ is uniformly continuous on $\C_+$ and the claim follows.
\end{proof}
In \cite[Theorem 5.2]{noi}, it was shown that given a continuous semigroup $\{\Phi_t\}$ in $\mathcal{G}_{\infty}$ such that $\Phi_t(s)=s+g_t(k^{-s})$, $g_t:\D\to\C_+$ holomorphic, then the infinitesimal generator $H$ of the semigroup is of the form $H(s)=G(k^{-s})$, where $G:\D\to\C_+$. Clearly, this also gives that the Koenigs function $h$ of the semigroup is  of the form $h(s)=F(k^{-s})$, where $F:\D\to\C_+$ is holomorphic, too. The reverse implication also holds. More precisely, for $\{\Phi_t\}$ a continuous semigroup in the class $\mathcal{G}_{\infty}$ such that its Koenigs function $h$ is given by $h(s)=F(k^{-s})$, with $F:\D\to\C_+$ holomorphic and $k\in\N$, $k\geq2$, the elements $\Phi_t$ of the semigroup are all of the form $\Phi_t(s)=s+\varphi_t(s)$, where $\varphi_t(s)=g_t(k^{-s})$, and $g_t:\D\to\C_+$ is holomorphic for every $t$.

\ 
In fact, these specific semigroups can be fully described through spirallike functions on $\D$.
\begin{proposition}\label{prop:caracsemperio}
Let $k\in\N$, $k\geq2$.
\begin{itemize}
    \item[a)] Let $f:\D\to\C$ be $c$-spirallike with respect to the origin. There exists a continuous semigroup $\{\Phi_t\}$ in $\mathcal{G}_{\infty}$ such that its Koenigs function is 
    \[
    h(s)=-\frac{1}{c\log k}\log(f(k^{-s})), \quad c>0.
    \]
    Reciprocally, let $\{\Phi_t\}$ be a continuous semigroup in $\mathcal{G}_{\infty}$ such that its Koenigs function $h$ is of the form $h(s)=ds+g(k^{-s})$, where $g\colon\D\setminus\{0\}\to\C$ is holomorphic, $d\in\overline{\C}_+\setminus\{0\}$. Then, $f(z)=z\exp(-\log(k) g(z)/d)$ is $1/d$-spirallike with $d\in\overline{\C}_+\setminus\{0\}$. 
    \item[b)] Moreover, if $f$ cannot be continuously extended to $\partial\D$, then, for every $t$, $\Phi_t\not\in\mathcal{G}_{\mathcal{A}}$.
 \end{itemize}
  
\end{proposition}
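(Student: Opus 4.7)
The plan is to reduce both implications of a) to Theorem \ref{th:Koenigs}, translating between the spirallikeness of $f$ and the admissibility of $h'$ as a Dirichlet series with range in $\overline{\C_+}\setminus\{0\}$, and then to deduce b) from the boundary extension criterion in Theorem \ref{koenigs}.

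For the forward direction of a), I would note that $c$-spirallikeness with $c>0$ is equivalent to starlikeness with respect to the origin, so $f(z)=z\psi(z)$ with $\psi$ holomorphic and non-vanishing on $\D$. Fixing a holomorphic branch of $\log\psi$ on $\D$, the function $h(s)=s/c-(c\log k)^{-1}\log\psi(k^{-s})$ is well-defined and holomorphic on $\C_+$, and a direct calculation yields
\[
h'(s)=\frac{k^{-s}f'(k^{-s})}{c\,f(k^{-s})}.
\]
The auxiliary function $P(z)=zf'(z)/(cf(z))$ extends holomorphically across $z=0$ (since $f$ has a simple zero there) and has strictly positive real part by the classical characterisation of starlikeness. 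Expanding $P(z)=\sum_{n\geq 0}a_n z^n$ turns $h'(s)=\sum_{n\geq 0}a_n k^{-ns}$ into a Dirichlet series with $h'(\C_+)\subset\C_+\subset\overline{\C_+}\setminus\{0\}$, so Theorem \ref{th:Koenigs} supplies a continuous semigroup in $\mathcal G=\mathcal G_\infty$ having $h$ as its Koenigs function.

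For the reverse direction of a), I would differentiate $f(z)=z\exp(-\log(k)g(z)/d)$ logarithmically to obtain
\[
d\cdot \frac{zf'(z)}{f(z)}=d-z\log(k)\,g'(z),
\]
whose right-hand side equals $h'(s)$ when $z=k^{-s}$. By Theorem \ref{th:Koenigs}, $h'$ takes values in $\overline{\C_+}\setminus\{0\}$, and since $k^{-s}$ sweeps $\D\setminus\{0\}$ as $s$ varies over $\C_+$, this yields $\mathrm{Re}(d\cdot zf'(z)/f(z))\geq 0$ there, which is precisely the classical criterion for $f$ to be $1/d$-spirallike with respect to the origin. Univalence is built into the same characterisation, the degenerate case $h'\equiv i\beta$ corresponding to an affine $h$ and a linear $f$. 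A preparatory step is to verify that $f$ extends holomorphically across the origin with $f(0)=0$; this follows because $h'$ being a Dirichlet series forbids the Laurent expansion of $g$ at $0$ to have negative-index terms, so $g$ is automatically holomorphic on the whole of $\D$.

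For b) I would argue by contraposition. If some $\Phi_{t}\in\mathcal G_{\mathcal A}$, Theorem \ref{koenigs} provides a continuous extension of $h$ from $\overline{\C_+}\cup\{\infty\}$ to the Riemann sphere, and in particular to each point of $i\R$. Using the representation $h(s)=s/c-(c\log k)^{-1}\log\psi(k^{-s})$ obtained in a), the function $\log\psi(k^{-s})$ extends continuously to $i\R$, and exponentiating $\psi(k^{-s})$ does too; since $s\mapsto k^{-s}$ maps $i\R$ onto the whole unit circle, $\psi$ extends continuously to $\partial\D$, hence so does $f=z\psi$, contradicting the hypothesis on $f$. I expect the main obstacle to be the consistent handling of branches of the logarithm, together with checking that the boundary extension of $h$ truly descends to $\psi$ through the periodic substitution $z=k^{-s}$, so that continuity is propagated uniformly around the whole unit circle.
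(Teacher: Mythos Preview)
Your proposal is correct and follows essentially the same route as the paper: both directions of a) are reduced to Theorem \ref{th:Koenigs} via the identity $h'(s)=\tfrac{1}{c}\,\tfrac{zf'(z)}{f(z)}\big|_{z=k^{-s}}$ together with the classical spirallikeness criterion, and b) is obtained from Theorem \ref{koenigs} by linking continuous extendability of $h$ to $i\R$ with that of $f$ to $\partial\D$. Your write-up is in fact more explicit than the paper's in a couple of places---the factorisation $f=z\psi$ to handle the logarithm, the observation that the Dirichlet-series structure of $h'$ forces $g$ to be holomorphic across $0$, and the verification that the boundary extension of $h$ descends through the covering $s\mapsto k^{-s}$---all of which the paper leaves implicit.
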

\begin{proof}
We begin showing the necessity in $a)$. Since $f$ is $c$-spirallike with respect to zero, $f(0)=0$. Therefore, the function $F(s)=f(k^{-s})$ does not vanish in $\C_+$ and, consequently, it admits a logarithm. Set $h(s)=-\frac{1}{c\log k}\log(f(k^{-s})), \text{Re}(c)\geq0, c\not=0$. We claim that it is the Koenigs function of a continuous semigroup in $\mathcal{G}_{\infty}$. By Theorem \ref{th:Koenigs}, it suffices to check that $h'$ is a Dirichlet series such that $\text{Re}(h')>0$. A simple computation shows that
     \[
    h'(s)=\frac{k^{-s}}{c}\frac{f'(k^{-s})}{f(k^{-s})}.
    \]
Since $f:\D\setminus\{0\}\to\D\setminus\{0\}$ and $k^{-s}$ sends the right half-plane into the punctured unit disc, we have that $h'=G(k^{-s})$ with $G$ a holomorphic function in $\D$. Therefore, $h'$ is a Dirichlet series. Now, as $f$ is $c$-spirallike, we conclude that $\text{Re}(h')>0$ (\cite[Theorem 9.4.5]{manoloal}) and Theorem \ref{th:Koenigs} gives the desired conclusion.

\ 
For the sufficiency of $a)$, since $\{\Phi_t\}$ is a continuous semigroup in $\mathcal{G}_{\infty}$, by \cite[Corollary 6.8]{noi}, we have that $h'(s)=d-\log(k)g'(k^{-s})k^{-s}$ is a Dirichlet series such that $\text{Re}(h')>0$. Taking this into account, a simple computation for those $z\in\D$ such that $f(z)\not=0$ yields
\[
\text{Re}\left(
dz\frac{f'(z)}{f(z)}
\right)
=
\text{Re}\left(
d-\log(k)g'(z)z
\right)>0,
\]
where we are taking $z=k^{-s}$, $s\in\C_+$.

\ 
Regarding part $b)$, if $f$ fails to have a continuous extension to $\partial\D$, then so does the Koenigs function $h$. An application of Theorem \ref{koenigs} yields the desired result.
    \end{proof}
This description of these `periodic' continuous semigroups in $\mathcal{G}_{\infty}$ will allow us to provide many examples of continuous semigroups in $\mathcal{G}_{\infty}$ which fail to be in $\mathcal{G}_{A}$.
\begin{theorem}\label{th: sem ginf no ga}
There exists a continuous semigroup $\{\Phi_t\}$ in the class $\mathcal{G}_{\infty}$ such that $\Phi_t$ is not in $\mathcal{G}_{\mathcal{A}}$ for any $t>0$.
\end{theorem}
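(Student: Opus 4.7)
The plan is to invoke Proposition \ref{prop:caracsemperio}: part (a) produces continuous semigroups in $\mathcal{G}_{\infty}$ out of spirallike functions $f\colon\D\to\C$ with $f(0)=0$, and part (b) states that if such an $f$ does not admit a continuous extension to $\partial\D$, then no element of the resulting semigroup lies in $\mathcal{G}_{\mathcal{A}}$. The task thus reduces to exhibiting one univalent starlike function $f\colon\D\to\C$ with $f(0)=0$ that fails to extend continuously to $\overline{\D}$.

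To produce such an $f$, I would take a sequence of angles $\theta_n\to\theta_0$ for some $\theta_0\in(0,2\pi)$, say $\theta_n=\theta_0+1/n$, and set $\Omega=\D\setminus\bigcup_{n\geq 1}S_n$, where $S_n=\{re^{i\theta_n}:1/2\leq r\leq 1\}$. Each slit meets $\partial\D$ at $e^{i\theta_n}$, so $\overline{\C}\setminus\Omega$ is connected in the Riemann sphere and $\Omega$ is simply connected. Moreover $0\in\Omega$ and $\Omega$ is starlike with respect to $0$: for any $z\in\Omega$ the radial segment $[0,z]$ stays in $\Omega$, because segments at angles different from every $\theta_n$ avoid all slits, while a segment at angle $\theta_n$ forces $|z|<1/2$ (otherwise $z$ would lie on $S_n$) and so lies inside $\{|w|<1/2\}\subset\Omega$. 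Because the slits $S_n$ accumulate to the radial segment of argument $\theta_0$, the boundary $\partial\Omega$ fails to be locally connected near $e^{i\theta_0}$; by Carath\'eodory's extension theorem the Riemann map $f\colon\D\to\Omega$ normalized by $f(0)=0$ does not extend continuously to $\overline{\D}$.

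With this $f$ in hand the proof concludes quickly. Since $f$ is univalent with $f(0)=0$ and $f(\D)=\Omega$ starlike with respect to $0$, $f$ is $1$-spirallike. Taking $k=2$ and $c=1$ in Proposition \ref{prop:caracsemperio}(a), I obtain a continuous semigroup $\{\Phi_t\}$ in $\mathcal{G}_{\infty}$ whose Koenigs function is $h(s)=-\frac{1}{\log 2}\log f(2^{-s})$, and part (b) of the same proposition immediately yields $\Phi_t\notin\mathcal{G}_{\mathcal{A}}$ for every $t>0$.

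The main difficulty is to secure simultaneously the simple-connectedness of $\Omega$, its starlikeness with respect to $0$, and the failure of its boundary to be a Jordan curve in the Riemann sphere. The above construction balances these requirements: the slits reach $\partial\D$ so that $\overline{\C}\setminus\Omega$ stays connected, they start at $r=1/2$ so that $\Omega$ remains starlike with respect to $0$, and they accumulate toward an interior radial direction so that local connectedness of $\partial\Omega$ breaks down at the boundary point $e^{i\theta_0}$.
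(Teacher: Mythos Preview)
Your approach mirrors the paper's exactly: both invoke Proposition \ref{prop:caracsemperio} with the Riemann map onto a starlike slit domain whose boundary fails to be locally connected, and then conclude via Carath\'eodory's theorem. The only difference is the specific placement of the slits (the paper uses angles $\pi/(2n)$ accumulating at $0$, together with the slit $\gamma_0=[1/2,1)$).

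There is, however, a small technical slip in your construction. As written, $\Omega=\D\setminus\bigcup_{n\geq 1}S_n$ is \emph{not open}: points on the limiting segment $\{re^{i\theta_0}:1/2\leq r<1\}$ belong to $\Omega$ (they lie on no $S_n$), yet every neighbourhood of such a point meets infinitely many of the slits $S_n$, so these points are not interior to $\Omega$. Consequently $\Omega$ is not a domain and no Riemann map onto it exists. The fix is immediate: include the accumulation slit $S_0=\{re^{i\theta_0}:1/2\leq r\leq 1\}$ among the removed segments---exactly what the paper does by adjoining $\gamma_0=[1/2,1)$ at the accumulation angle $0$. With that correction your argument is complete and coincides with the paper's.
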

\begin{proof}
For $n\in\N$, consider the segments
    $\gamma_n=\{z\in\D: 1/2\leq|z|<1, \  \text{arg}(z)=\frac{\pi}{2n}\}$ and let $\gamma_0=[1/2,1)$. We set 
    $\Gamma=\bigcup_{n\geq0}\gamma_n$.  
    Then, the domain $\Omega=\D\setminus \Gamma$ is starlike with respect to the origin. We shall now consider the Riemann map $f$ from the unit disc $\D$ into the domain $\Omega$ such that $f(0)=0$. By Proposition \ref{prop:caracsemperio} $a)$, there exists a continuous semigroup $\{\Phi_t\}$ in $\mathcal{G}_{\infty}$ whose Koenigs function is
    \[
    h(s)=-\frac{1}{c\log k}\log(f(k^{-s})), \quad c>0.
    \]
 Now, by Carathéodory's Theorem (\cite[Theorem 4.3.1]{manoloal}), since $\partial\Omega$ is not locally connected, $f$ fails to have a continuous extension to $\partial\D$.  Then, Proposition \ref{prop:caracsemperio} $b)$ gives the conclusion.   
  \end{proof}
We can also provide examples of a well known fact for semigroups of analytic functions in the unit disc: there exist continuous semigroups $\{\Phi_t\}$ in $\D$ such that for some $t_0>0$ the composition operators $C_{\Phi_{t}}$ are compact in $H^{\infty}(\D)$ for every $t> t_0$ but fail
to be compact in $ H^{\infty}(\D)$ for $t\in[0,t_0]$. 
\begin{proposition}
    There exists a continuous semigroup $\{\Phi_t\}$ in $\mathcal{G}_{\mathcal{A}}$ such that for some $t_0>0$ the composition operators $C_{\Phi_t}$ are compact in $\mathcal{A}(\C_+)$ for $t>t_0$ but $C_{\Phi_t}$ fails to be compact in $\mathcal{A}(\C_+)$ for every $t\in[0,t_0]$.
\end{proposition}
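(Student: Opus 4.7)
The plan is to apply Proposition \ref{prop:caracsemperio}(a) with a starlike function whose image is a slit disc, engineered so that the scaling present in the Koenigs picture crosses a critical radius at a prescribed positive time. Fix an integer $k\geq 2$ and real numbers $0<r_0<R$, and let $\Omega=\D(0,R)\setminus [r_0,R)$. This $\Omega$ is starlike with respect to $0$: for $z\in\Omega$ and $\lambda\in[0,1]$, $|\lambda z|\leq|z|<R$, and $\lambda z$ belongs to the removed slit only when $z$ does. Let $f\colon\D\to\Omega$ be the Riemann map with $f(0)=0$ and $f'(0)>0$, so that $f$ is starlike (that is, $c$-spirallike with $c=1$). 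Proposition \ref{prop:caracsemperio}(a) then provides a continuous semigroup $\{\Phi_t\}$ in $\mathcal{G}_{\infty}$ whose Koenigs function is $h(s)=-\frac{1}{\log k}\log f(k^{-s})$, and solving $h(\Phi_t(s))=h(s)+t$ gives the explicit expression
\[
\Phi_t(s)=-\frac{1}{\log k}\log f^{-1}\bigl(k^{-t}f(k^{-s})\bigr).
\]

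Next I would show $\{\Phi_t\}\subset \mathcal{G}_{\mathcal{A}}$. The boundary $\partial\Omega=\{|z|=R\}\cup[r_0,R]$ is locally connected, so Carath\'eodory's theorem supplies a continuous extension of $f$ to $\overline{\D}$. Writing $f(u)=uF(u)$, with $F$ continuous and non-vanishing on $\overline{\D}$, the formula $h(s)=s-\frac{1}{\log k}\log F(k^{-s})$ exhibits $h$ as uniformly continuous on $\C_+$. Since $\varphi_t(s):=\Phi_t(s)-s$ is of the form $g_t(k^{-s})$ with $g_t\colon \D\to \C_+$ holomorphic (a Schwarz-lemma argument applied to $u\mapsto f^{-1}(k^{-t}f(u))$ gives $\text{Re}(g_t)>0$ on $\D$ for $t>0$), Corollary \ref{coro2} applies and yields $\Phi_t\in \mathcal{G}_{\mathcal{A}}$ for every $t\geq 0$.

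The heart of the argument is identifying the critical time $t_0:=\log(R/r_0)/\log k>0$. From $\text{Re}(\Phi_t(s))=-\frac{1}{\log k}\log|f^{-1}(k^{-t}f(k^{-s}))|$ and the fact that $u=k^{-s}$ sweeps $\D\setminus\{0\}$ as $s$ varies in $\C_+$, one obtains
\[
\inf_{s\in\C_+}\text{Re}(\Phi_t(s)) \;=\; -\frac{1}{\log k}\log\sup_{u\in\D}\bigl|f^{-1}(k^{-t}f(u))\bigr|.
\]
By properness of $f^{-1}\colon \Omega\to\D$, the right-hand supremum equals $1$ exactly when $\overline{k^{-t}\Omega}\cap\partial\Omega\neq\emptyset$, and is strictly less than $1$ otherwise. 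Testing against $\partial\Omega=\{|z|=R\}\cup[r_0,R]$, the closed disc $\overline{k^{-t}\Omega}=\overline{\D(0,k^{-t}R)}$ avoids the outer circle for every $t>0$ and meets the slit $[r_0,R]$ in the interval $[r_0,\min(R,k^{-t}R)]$, which is non-empty precisely when $k^{-t}R\geq r_0$, that is, $t\leq t_0$. Hence $\inf\text{Re}(\Phi_t)=0$ for $t\in[0,t_0]$ and $\inf\text{Re}(\Phi_t)>0$ for $t>t_0$, and Theorem \ref{thcomp} closes the proof. The delicate point is to verify, for $t\in(0,t_0]$, that the overlap $[r_0,\min(R,k^{-t}R)]$ is genuinely approached by sequences $k^{-t}f(u_n)$ lying inside $k^{-t}\Omega$; this follows from local connectedness of $\partial\Omega$, which makes every point of the slit $[r_0,R]$ accessible from within $\Omega$ on either side.
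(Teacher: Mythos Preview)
Your proof is correct and follows essentially the same approach as the paper's own proof: both build the semigroup via Proposition~\ref{prop:caracsemperio}(a) applied to the Riemann map onto a slit disc, use the local connectedness of the boundary (Carath\'eodory) to establish membership in $\mathcal{G}_{\mathcal{A}}$, and identify the compactness threshold as the time at which the scaled domain ceases to meet the slit. The paper fixes $\Omega=\D\setminus[1/2,1)$ and varies the parameter $c$ in the Koenigs function (yielding $t_0=1/c$), while you fix $c=1$ and vary $R,r_0,k$ (yielding $t_0=\log(R/r_0)/\log k$); these are equivalent parametrizations. Your compactness dichotomy via the criterion $\overline{k^{-t}\Omega}\cap\partial\Omega\neq\emptyset$ and properness of $f^{-1}$ is a clean geometric reformulation of the paper's more hands-on argument (which proves $h^{-1}(\C_t)\subset\C_\eta$ by contradiction for $t>t_0$, and exhibits an explicit sequence for $t\leq t_0$). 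The ``delicate point'' you flag at the end is in fact immediate: any $w_0\in\overline{k^{-t}\Omega}\cap\partial\Omega$ is automatically approached from within $k^{-t}\Omega\subset\Omega$, and properness of $f^{-1}$ then forces $|\psi_t(u_n)|\to 1$.
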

\begin{proof}
    Let $D_1=\D\setminus [1/2,1)$ be the starlike domain with respect to the origin. Let $f:\D\to D_1$ be the Riemann mapping fixing the origin and such that $f'(0)>0$. In particular, this implies that $f((-1,1))\subset (-1,1/2)$. Now, thanks to Proposition \ref{prop:caracsemperio} $a)$, there exists a continuous semigroup $\{\Phi_t\}$ in $\mathcal{G}_{\infty}$ whose Koenigs function is 
    \[
    h(s)=-\frac{1}{c\log 2}\log(f(2^{-s})), \quad c>0.
    \]
   Observe that $f$ can be continuously extended to $\overline{\D}$ (by Carathéodory's Theorem or by direct verification). This implies, by Theorem \ref{koenigs}, that for every $t>0$, $\Phi_t(s)=s+\varphi_t(s)$ is uniformly continuous on horizontal strips of $\C_+$ (and so is $\varphi_t)$. By the $\frac{2\pi i}{\log2}$-periodicity of $\varphi_t$, we conclude that for all $t>0$, $\Phi_t$ is uniformly continuous in $\C_+$. Hence, $\{\Phi_t\}\in\mathcal{G}_{\mathcal{A}}$. Now, $h$ maps $\C_+$ into $D_2$, where 
$D_2=\C_+\setminus\bigcup_{k\in\Z}\{t+2k\pi i: t\in[0,1/c]\}$. Suppose that $t>1/c$. Then,
\[
\Phi_t(\C_+)=h^{-1}(h(\C_+)+t)\subset h^{-1}(\C_t).
\]
We claim that there exists $\eta>0$ such that $h^{-1}(\C_t)\subset \C_{\eta}$. If this were not the case, there would exists a sequence $\{s_n\}\in\C_t$ such that $\text{Re}(s_n)\to0$ and $\text{Re}(h(s_n))>t$, for all $n$. For each $n$, set $z_n=2^{-s_n}$. Then, $|z_n|\to1$. Now,
\[
|f(z_n)|=\exp(-c\log(2)\text{Re}(h(s_n)))<\exp(-ct\log2)=2^{-tc}<1/2.
\]
Nonetheless, since $|z_n|\to1$, necessarily $f(z_n)\to\partial D_1$, which constitutes a contradiction with the fact that $|f(z_n)|<2^{-tc}<1/2$.

\ 
Suppose now that $t\leq 1/c$. Let $\mathbb{S}=\{z\in\C_+: 0<\text{Im}(z)<2\pi i/\log(2)\}$. The map $\phi(s)=f(2^{-s})$ is a bijection between the set $\mathbb{S}
$ and $\D\setminus[0,1)$. Therefore, there exists a sequence $\{s_n\}$ in $\{z:\text{Im}(z)>0\}\cap\mathbb{S}$ such that $f(2^{-s_n})\to1$. Therefore, considering the principal branch of the logarithm of $f(2^{-s_n})$, we have that $h(s_n)\to0$. Hence, $h(s_n)+t\to t\in\partial D_1$ as $n\to\infty$. Then, $h^{-1}(h(s_n)+t)\to\partial\C_+$ as $n\to\infty$. We conclude that $\text{Re}(\Phi_t(s_n))\to0$. Hence, by Theorem \ref{thcomp}, the semigroup $\{\Phi_t\}$ satisfies the desired properties with $t_0=1/c$.
\end{proof}

%\begin{remark}
  %  Notice that part $a)$ from Proposition \ref{prop:caracsemperio} characterises the continuous semigroups in $\mathcal{G}_{\infty}$ whose Dirichlet series depend on a finite number of primes in terms of $c$-spirallike functions on $\D$. 
%\end{remark}

\subsection{Some final examples}
%Since the infinitesimal generator $H$ of a semigroup and its Koenigs function are intimately related, 
We begin giving a condition on the infinitesimal generator $H$ so that the corresponding semigroup $\{\Phi_t\}$ lies in $\mathcal{G}_{\mathcal{A}}$.
\begin{lemma}
Let $H$ be a holomorphic function in $\C_+$ such that $H\in\mathcal{D}$ and $H'\in\mathcal{H}^{\infty}$. Then, $H\in\mathcal{A}(\C_+)$.
\end{lemma}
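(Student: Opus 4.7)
The plan is to show that under the given hypotheses, $H$ is (Lipschitz and hence) uniformly continuous on $\C_+$, and then invoke Proposition \ref{thprop} to conclude membership in $\mathcal{A}(\C_+)$.

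First I would use that $H' \in \mathcal{H}^{\infty}$ means $H'$ is bounded on $\C_+$, say $\|H'\|_{\infty} = C < \infty$. Since $\C_+$ is convex, for any two points $s_1, s_2 \in \C_+$, the straight segment $\gamma(t) = s_1 + t(s_2 - s_1)$, $t \in [0,1]$, lies entirely in $\C_+$. The fundamental theorem of calculus along this segment gives
\[
H(s_2) - H(s_1) = (s_2 - s_1)\int_0^1 H'(\gamma(t))\,dt,
\]
whence $|H(s_2) - H(s_1)| \leq C |s_2 - s_1|$. Thus $H$ is Lipschitz, and in particular uniformly continuous, on $\C_+$.

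Now $H \in \mathcal{D}$ by hypothesis and $H$ is analytic and uniformly continuous on $\C_+$, so $H$ fits exactly in the setting of Proposition \ref{thprop}. Applying that proposition yields $H \in \mathcal{A}(\C_+)$, which is what we wanted.

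There is no real obstacle here: the only subtle point is making sure the hypotheses of Proposition \ref{thprop} are satisfied, which they are once the Lipschitz estimate is in place. Convexity of $\C_+$ is what makes the boundedness of $H'$ translate cleanly into global uniform continuity of $H$, and the rest is automatic from the previously established characterisation of $\mathcal{A}(\C_+)$.
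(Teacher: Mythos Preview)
Your proof is correct and follows essentially the same route as the paper: bounded derivative on the convex domain $\C_+$ gives Lipschitz (hence uniform) continuity, and then Proposition~\ref{thprop} yields $H\in\mathcal{A}(\C_+)$. The paper's argument is just the same idea stated more tersely.
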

\begin{proof}
Since $H'$ is bounded, $H$ is, in fact, Lipschitz-continuous in $\C_+$ and, consequently, uniformly continuous in $\C_+$. Hence, $H\in\mathcal{A}(\C_+)$.
%Since $H\in\mathcal{D}$, there exists $\nu>0$ such that $|H(s)|\leq M$ for all $s\in\overline{\C}_{\eta}$. Let $\Omega_{\eta}=\{s\in\C_+:0<\text{Re}(s)<\eta\}$. Consider $s\in\C_+$. If $\text{Re}(s)\geq\eta$, $H(s)$ is bounded. Hence, assume that $s\in\Omega_{\eta}$. By the mean value theorem, for any $w\in\overline{\C}_{\eta}$ such that $\text{Im}(w)=s$, there exists a point $s_1$ in the segment joining $s$ and $w$ such that $\text{Re}(s_1)=\eta$. Hence, we have that
%\begin{align*}
   % |H(s)|\leq |H(s)-H(s_1)|+|H(s_1)|&\leq \int_s^{s_1}|H'(u)|du+|H(s_1)|\\&\leq \|H'\|_{\infty}t+M\leq M_1\eta +M.
%\end{align*}
\end{proof}
\begin{proposition}\label{prop:derivada acotada}
Let $H:\C_+\to\C_+$ be in $\mathcal{H}^{\infty}$ and such that $H'\in\mathcal{H}^{\infty}$. Then, the corresponding semigroup $\{\Phi_t\}_t$ is in $\mathcal{G}_{\mathcal{A}}.$
\end{proposition}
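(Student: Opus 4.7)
The plan is to establish the stronger fact that each $\Phi_t$ is uniformly continuous on the whole right half-plane $\C_+$; combined with $\Phi_t\in\mathcal{G}_\infty$, this immediately yields $\Phi_t\in\mathcal{G}_{\mathcal{A}}$, since uniform continuity on $\C_+$ trivially implies uniform continuity on every set $A_M$.

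First I would invoke Theorem \ref{gcoro}: since $H\in\mathcal{H}^\infty\subset\mathcal{D}$ and $H(\C_+)\subset \overline{\C_+}$, the function $H$ is the infinitesimal generator of a continuous semigroup $\{\Phi_t\}$ in the Gordon--Hedenmalm class $\mathcal{G}\subset\mathcal{G}_{\infty}$, and in particular $\Phi_t(s)=s+\varphi_t(s)$ with $\varphi_t\in\mathcal{D}$. Integrating the Cauchy problem \eqref{cauchy} in time gives $\varphi_t(s)=\int_0^t H(\Phi_u(s))\,du$, so $\|\varphi_t\|_\infty\leq t\|H\|_\infty$, i.e.\ $\varphi_t\in\mathcal{H}^{\infty}$.

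The heart of the argument is a uniform bound on $\Phi_t'$. Differentiating the Cauchy problem in $s$, which is legitimate because $\Phi_t(s)$ is jointly holomorphic in $s$ and $C^1$ in $t$ so that mixed partials commute, yields the linear ODE
\begin{equation*}
\frac{\partial}{\partial t}\Phi_t'(s)=H'(\Phi_t(s))\,\Phi_t'(s),\qquad \Phi_0'(s)=1,
\end{equation*}
whose explicit solution is
\begin{equation*}
\Phi_t'(s)=\exp\left(\int_0^t H'(\Phi_u(s))\,du\right).
\end{equation*}
Since $\Phi_u(s)\in\C_+$ and $|H'|\leq \|H'\|_\infty$ there, I obtain the bound $|\Phi_t'(s)|\leq \exp(t\|H'\|_\infty)$ uniformly in $s\in\C_+$.

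Finally, since $\C_+$ is convex, the mean value inequality applied along line segments shows that $\Phi_t$ is Lipschitz on $\C_+$ with constant $\exp(t\|H'\|_\infty)$, hence uniformly continuous. Combined with $\Phi_t\in\mathcal{G}_{\infty}$, this gives $\Phi_t\in\mathcal{G}_{\mathcal{A}}$. The only mild obstacle is the rigorous interchange of $\partial_t$ and $\partial_s$ in passing from \eqref{cauchy} to the ODE for $\Phi_t'$, but this is a routine application of standard results on smoothness of flows of holomorphic vector fields and can alternatively be bypassed by differentiating the Cauchy formula representation of $\Phi_t$ on any compact subset of $\C_+$.
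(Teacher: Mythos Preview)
Your proof is correct and reaches the same endpoint as the paper: each $\Phi_t$ is Lipschitz on $\C_+$ with constant $\exp(t\|H'\|_\infty)$, hence uniformly continuous, hence in $\mathcal{G}_{\mathcal{A}}$.

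The route, however, differs from the paper's. The paper works directly with differences: setting $G(t)=|\Phi_t(s_1)-\Phi_t(s_2)|$ and using that $H$ is $\|H'\|_\infty$-Lipschitz on the convex set $\C_+$, one gets from the integrated Cauchy problem
\[
G(t)\leq G(0)+\|H'\|_\infty\int_0^t G(\tau)\,d\tau,
\]
and Gronwall's lemma yields $G(t)\leq G(0)\exp(t\|H'\|_\infty)$. You instead pass through the variational equation $\partial_t\Phi_t'(s)=H'(\Phi_t(s))\Phi_t'(s)$, solve it explicitly, and bound $|\Phi_t'|$. Both are standard ways of obtaining Lipschitz dependence on initial data for ODEs and give the identical constant. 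The paper's argument is marginally cleaner in that it sidesteps the interchange of $\partial_t$ and $\partial_s$ that you flag as a ``mild obstacle''; your approach, on the other hand, yields the pointwise bound on $\Phi_t'$ as a byproduct, which can be useful elsewhere. The preliminary observations you make (that $\{\Phi_t\}\subset\mathcal{G}_\infty$ via Theorem~\ref{gcoro}, and that $\varphi_t\in\mathcal{H}^\infty$) are correct and make explicit what the paper takes for granted.
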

\begin{proof}
The claim follows immediately from Gronwall's Lemma.  Indeed, set $M:=\|H'\|_{\infty}$, consider $s_1,s_2\in\C_+$ and let $G(t)=|\Phi_t(s_1)-\Phi_t(s_2)|$. Then, for $t>0$ fixed,
\begin{align*}
    G(t)=|\Phi_t(s_1)-\Phi_t(s_2)|&\leq |s_1-s_2|+\int_0^t|H(\Phi_{\tau}(s_1))-H(\Phi_{\tau}(s_2))|d\tau\\
    &\leq G(0)+ M\int_0^t|\Phi_{\tau}(s_1)-\Phi_{\tau}(s_2)|d\tau\\
    &=G(0)+M\int_0^t|G(\tau)|d\tau.
    \end{align*}
Then, Gronwall's Lemma yields
 \[
    G(t)\leq G(0)\exp(Mt).
\]
Therefore, for each $t>0$, the function $\Phi_t$ is Lipschitz so, in particular, it is uniformly continuous. By Theorem \ref{teor:fcont-uniform} the conclusion follows.
\end{proof}
Throughout the next examples, $\{\Phi_t\}$ will denote a continuous semigroup in the class $\mathcal{G}_{\infty}$, $H$ will stand for its infinitesimal generator and $h$ for its Koenigs function.
\begin{example}\emph{ $\{\Phi_t\}\in\mathcal{G}_{\mathcal{A}}$ needs not imply that $h\in\mathcal{A}(\C_+)$.}

\ 
 Let $h'(s)=\frac{1}{1-2^{-s}}$. By Theorem \ref{th:Koenigs}, its primitive, $h$ is the Koenigs function of a continuous semigroup $\{\Phi_t\}$ in $\mathcal{G}_{\infty}$. Now, since $H=1/h'$ is a Dirichlet polynomial, by Proposition  \ref{prop:derivada acotada}, we conclude that $\{\Phi_t\}$ is actually in $\mathcal{G}_{\mathcal{A}}$. However, when $s=x$, $x\in(0,\varepsilon)$, $\varepsilon>0$, the function $h'$ behaves like $1/x$ and its primitive is clearly unbounded close to zero.
 \end{example}
\begin{example}
    \emph{$\{\Phi_t\}\in\mathcal{G}_{\mathcal{A}}$ needs not imply that $H\in\mathcal{H}^{\infty}$.}

    \ 
Now, we take $h'(s)=1+2^{-s}$. Its primitive is clearly uniformly continuous in $\C_+$. On the other hand, since $H=1/h'=g(2^{-s})$, where $g:\D\to\C_+$ is holomorphic, then the semigroup is of the form $\Phi_t(s)=s+f_t(2^{-s})$, where $f_t:\D\to\C_+$ for every $t>0$. Then, Corollary \ref{coro2} guarantees us that $\{\Phi_t\}\in\mathcal{G}_{\mathcal{A}}$. However, $H(s)=1/(1+2^{-s})$ is not in $\mathcal{H}^{\infty}$.
\end{example}

\begin{example}
    \emph{$H\in\mathcal{H}^{\infty}$ needs not imply that $\{\Phi_t\}\in\mathcal{G}_{\mathcal{A}}$.}

    \ 
    For $n\in\N$, consider the segments
    $\gamma_n=\{z\in\D: 1/2\leq|z|<1, \  \text{arg}(z)=\frac{\pi}{2n}\}$ and let $\gamma_0=[1/2,1)$. We set 
    $\Gamma=\bigcup_{n\geq0}\gamma_n$.  
    Then, the domain $D_1=\D\setminus \Gamma$ is starlike. Consider $f\colon\D\to D_1$ the Riemann mapping fixing the origin. Proposition \ref{prop:caracsemperio} guarantees the existence of a continuous semigroup $\{\Phi_t\}$ in $\mathcal{G}_{\infty}$ whose Koenigs function is 
    \[
    h(s)=-\frac{1}{c\log 2}\log(f(2^{-s})), \quad c>0.
    \]
We consider the function $h_1(s)=h(s)+ds$, where $\text{Re}(d)>0$. Since $\text{Re}(h')>0$ and $h_1'$ is still a Dirichlet series mapping $\C_+$ into itself, we conclude that $h_1$ is the Koenigs function of a continuous semigroup $\{\Psi_t\}$ in $\mathcal{G}_{\infty}$. Clearly, $|h_1'|>\text{Re}(h')>\text{Re}(d)>0$, so $H_1=1/h_1\in\mathcal{H}^{\infty}$. However, since $\partial\Omega$ is not locally connected, $f$ fails to have a continuous extension to $\partial\D$ and, consequently, so does $h_1$. Theorem \ref{koenigs} prevents the semigroup $\{\Psi_t\}$ from having a continuous extension to $\partial\C_+$ and, therefore, to belong to the class $\mathcal{G}_{\mathcal{A}}$.
\end{example}

\noindent 
\textbf{Acknowledgements.} We acknowledge Professor J. Bonet for calling our attention to some useful references.

\end{document}